\newlength\Colsep
\newtheorem{Theorem}{Theorem}
\newtheorem{Assumption}{Assumption}
\newtheorem{Definition}{Definition}
\newtheorem{Lemma}{Lemma}
\newtheorem{Proposition}{Proposition}
\newcommand{\inr}[1]{\bigl< #1 \bigr>}
\newcommand{\norm}[1]{\left\|#1\right\|}%
\newcommand\eps{\epsilon}
\newcommand{\beginproof}{{\bf Proof. {\hspace{0.2cm}}}}
\def \endproof
\DeclareMathOperator*{\argmin}{argmin}
\def\ds1{\textrm{1\kern-0.25emI}} %{\mathds{1}}
\newcommand \E{\mathbb{E}}
\newcommand \R{\mathbb{R}}
\newcommand \cB{{\cal B}}
\newcommand \cD{{\cal D}}
\newcommand \cF{{\cal F}}
\newcommand \cI{{\cal I}}
\newcommand \cK{{\cal K}}
\newcommand \cL{{\cal L}}
\newcommand \cN{{\cal N}}
\newcommand \cO{{\cal O}}
\newcommand \cX{{\cal X}}
\newcommand \cY{{\cal Y}}
\newcommand \bE{{\mathbb E}}
\newcommand \bP{{\mathbb P}}
\newcommand \bR{{\mathbb R}}
\newcommand \bX{{\mathbb X}}
\newcommand{\med}[1]{{\rm Median}(#1)}
\newcommand{\MOM}[2]{\text{MOM}_{#1}\big(#2\big)}
\begin{document}
\title{Robust Statistical learning with Lipschitz and convex loss functions}
\author{Geoffrey Chinot\footnote{geoffreychinot@gmail.com}, Guillaume Lecu{\'e}\footnote{lecueguillaume@gmail.com} and Matthieu Lerasle\footnote{lerasle@gmail.com}}
\maketitle

\begin{abstract}
	We obtain estimation and excess risk bounds for Empirical Risk Minimizers (ERM) and minmax Median-Of-Means (MOM) estimators based on loss functions that are both Lipschitz and convex.  
Results for the ERM are derived under weak assumptions on the outputs and subgaussian assumptions on the design as in \cite{pierre2017estimation}.
The difference with \cite{pierre2017estimation} is that the global Bernstein condition of this paper is relaxed here into a local assumption. 
We also obtain estimation and excess risk bounds for minmax MOM estimators under similar assumptions on the output and only moment assumptions on the design. Moreover, the dataset may also contains outliers in both inputs and outputs variables without deteriorating the performance of the minmax MOM estimators.  

Unlike alternatives based on MOM's principle \cite{lecue2017robust,LugosiMendelson2016}, the analysis of minmax MOM estimators is not based on the small ball assumption (SBA) of \cite{MR3431642}. 
In particular, the basic example of non parametric statistics where the learning class is the linear span of localized bases, that does not satisfy SBA  \cite{saumard2018optimality} can now be handled. 
Finally, minmax MOM estimators are analysed in a setting where the local Bernstein condition is also dropped out. 
It is shown to achieve excess risk bounds with exponentially large probability under minimal assumptions insuring only the existence of all objects. 
\end{abstract}

% \begin{keywords}
% Robust Learning, Lipschtiz loss function, Rademacher complexity bounds.
% \end{keywords}

\section{Introduction}\label{sec:Intro}
%The main issue in modern statistics, machine learning and computer science is the extraction of reliable information from a large amount of data within a ``reasonable'' time. From a theoretical point of view, the ultimate goal is to design estimators achieving statistically optimal rates of convergence under minimal assumptions.\\ %Moreover, as huge datasets are likely to be corrupted, algorithms need to deliver reliable decisions even on databases that may contain some ``outliers".  \\
In this paper, we  study learning problems where the loss function is simultaneously Lipschitz and convex. 
This situation happens in classical examples such as quantile, Huber and $L_1$ regression or logistic and hinge classification \cite{MR3526202}. 
As the Lipschitz property allows to make only weak assumptions on the outputs, these losses have been quite popular in robust statistics \cite{huber2011robust}. Empirical risk minimizers (ERM) based on Lipschitz losses such as the Huber loss have received recently an important attention \cite{fan,elsener2016robust,pierre2017estimation}. 

Based on a dataset $\{(X_i,Y_i):i=1,\ldots, N\}$ of points in $\cX\times \cY$, a class $F$ of functions and a risk function $R(\cdot)$ defined on $F$, the statistician want to estimate an oracle $f^*\in\argmin_{f\in F} R(f)$ or to predict an output $Y$ at least as good as $f^*(X)$. 
The risk function $R(\cdot)$ is often defined as the expectation of a loss function $\ell:(f,x,y)\in F\times \cX\times \cY\to \ell_f(x,y)\in\R$ with respect to the unknown distribution $P$ of a random variable $(X,Y)\in\cX\times\cY$: $R(f)=\E \ell_f(X,Y)$. 
Hereafter, the risk is assumed to have this form for a loss function $\ell$ such that, for any $(f,x,y)$, $\ell_f(x,y)=\bar{\ell}(f(x),y)$, for some function $\bar{\ell}:\bar{\cY}\times\cY\to\R$, where the set $\bar{\cY}$ is a convex set containing all possible values of $f(x)$.
The loss function $\ell$ is said Lipschitz and convex when the following assumption holds.
\begin{Assumption}\label{assum:lip_conv}
	There exists $L>0$ such that, for any $y \in \cY$, $\bar{\ell}(\cdot,y)$ is $L$-Lipschitz and convex.
\end{Assumption}
%\begin{Assumption}\label{assum:conv_loss}
%For any $y\in \cY$, $\bar{\ell}(\cdot,y)$ is convex.
%\end{Assumption}
Many classical loss functions satisfy Assumption~\ref{assum:lip_conv}  and we recall some of them below.
%A proof of these elementary results can be found, for example, in \cite[ Theorem~12.1]{pierre2017estimation}.
%\textcolor{red}{Attention, selon les exemples, on suppose ou pas que $y\in \{-1,1\}$, il faut préciser quand. Si possible, ce serait bien d'avoir des refs pour ces loss}
\begin{itemize}
	\item The \textbf{logistic loss} defined, for any $u\in\bar{\cY}=\R$ and $y\in\cY=\{-1, 1\}$, by 
	$\bar{\ell}(u,y) = \log(1+\exp(-yu))$ satisfies Assumption~\ref{assum:lip_conv} with $L=1$.
	\item The \textbf{hinge loss} defined, for any $u\in\bar{\cY}=\R$ and $y\in\cY=\{-1, 1\}$, by $\bar{\ell}(u,y) = \max(1-uy,0)$ satisfies Assumption~\ref{assum:lip_conv} with $L=1$.
	\item  The \textbf{Huber loss} defined, for any $\delta >0$, $u,y\in\cY=\bar{\cY}=\R$, by
	\[
	\bar{\ell}(u,y) =  
	\begin{cases}
	\frac{1}{2}(y-u)^2&\text{ if }|u-y| \leq \delta\\
	\delta|y-u|-\frac{\delta^2}{2}&\text{ if }|u-y| > \delta
	\end{cases}\enspace,
	\]
	satisfies  Assumption~\ref{assum:lip_conv} with $L=\delta$.
	\item The \textbf{quantile loss} is defined, for any $\tau \in (0,1)$, $u,y\in\cY=\bar{\cY}=\bR$, by
	$\bar{\ell}(u,y)  = \rho_{\tau}(u-y)$ where, for any $z\in \R$, $\rho_{\tau}(z) = z(\tau-I \{ z \leq 0\})$. It satisfies Assumption~\ref{assum:lip_conv} with $L=1$. For $\tau = 1/2$, the quantile loss is the $L_1$ loss.
\end{itemize}
%Then, we assume that $F$ is convex.
All along the paper, the following assumption is also granted.
\begin{Assumption}\label{assum:convex}
	The class $F$ is convex.
\end{Assumption}

%Three properties are investigated. 
%The first one results from the use of a Lipschitz loss function and indicates the robustness with respect to the noise of the problem. 

When $(X,Y)$ and the data $((X_i,Y_i))_{i=1}^N$ are independent and identically distributed (i.i.d.), for any $f\in F$, the empirical risk $R_N(f)=(1/N)\sum_{i=1}^N\ell_f(X_i, Y_i)$ is a natural estimator of $R(f)$. The empirical risk minimizers (ERM) \cite{Vapnik:1995:NSL:211359} obtained by minimizing $f\in F\to R_N(f)$ are expected to be close to the oracle $f^*$. 
This procedure and its regularized versions have been extensively studied in learning theory \cite{koltchinskii2011empirical}. 
When the loss is both convex and Lipschitz, results have been obtained in practice \cite{bach2011convex,bubeck2015convex} and theory \cite{MR3526202}.  
%to analyzed in this setting. 
%Minimax rates of convergence are obtained for this estimator under only a $L_1$ moment assumptions on the noise, extending previous results from \cite{pierre2017estimation}. 
Risk bounds with exponential deviation inequalities for the ERM can be obtained under weak assumptions on the outputs $Y$, but stronger assumptions on the design $X$. 
Moreover, fast rates of convergence \cite{MR2051002} can only be obtained under margin type assumptions such as the Bernstein condition \cite{MR2240689,MR3526202}.

The Lipschitz assumption and global Bernstein conditions (that hold over the entire $F$ as in \cite{pierre2017estimation}) imply boundedness in $L_2$-norm of the class $F$, see the discussion preceding Assumption~\ref{assum:fast_rates} for details. 
This boundedness is not satisfied in linear regression with unbounded design so the results of \cite{pierre2017estimation} don't apply to this basic example such as linear regression with a Gaussian design.
To bypass this restriction, the global condition is relaxed into a ``local'' one as in \cite{elsener2016robust,MR3526202}, see Assumption~\ref{assum:fast_rates} below.

The main constraint in our results on ERM is the assumption on the design.
This constraint can be relaxed by considering alternative estimators based on the ``median-of-means" (MOM) principle of \cite{MR702836, MR762855, MR855970, MR1688610} and the minmax procedure of \cite{MR2906886,MR3595933}. 
The resulting minmax MOM estimators have been introduced in \cite{lecue2017robust} for least-squares regression as an alternative to other MOM based procedures \cite{LugosiMendelson2016, LugosiMendelson2017, LugosiMendelson2017-2,lecue2017learning}.
In the case of convex and Lipschitz loss functions, these estimators satisfy the following properties 1) as the ERM, they are efficient under weak assumptions on the noise 2) they achieve optimal rates of convergence under weak stochastic assumptions on the design and 3) the rates are not downgraded by the presence of some outliers in the dataset.

These improvements of MOM estimators upon ERM are not surprising.
For univariate mean estimation, rate optimal sub-Gaussian deviation bounds can be shown under minimal $L_2$ moment assumptions for MOM estimators \cite{devroye2016sub} while the empirical mean needs each data to have sub-Gaussian tails to achieve such bounds \cite{MR3052407}. 
%It highlights the non-robustness of the ERM with respect to the design of the problem. 
In least-squares regression, MOM-based estimators \cite{LugosiMendelson2016, LugosiMendelson2017, LugosiMendelson2017-2,lecue2017learning,lecue2017robust} inherit these properties, whereas the ERM has downgraded statistical properties under moment assumptions (see Proposition~1.5 in \cite{lecue2016performance}). Furthermore, MOM procedures are resistant to outliers: results hold in the ``$\cO\cup\cI$'' framework of \cite{lecue2017learning,lecue2017robust}, where 
	inliers or informative data (indexed by $\cI$) only satisfy weak moments assumptions and the dataset may contain outliers (indexed by $\cO$) on which no assumption is made, see Section~\ref{sec:OUI}. 
	This robustness, that almost comes for free from a technical point of view is another important advantage of MOM estimators compared to ERM in practice. Figure~\ref{fig:MOMVsERMLogReg} \footnote{All figures can be reproduced from the code available at \url{https://github.com/lecueguillaume/MOMpower}}  illustrates this fact, showing that statistical performance of the standard logistic regression are strongly affected by a single corrupted observation, while the minmax MOM estimator maintains good statistical performance even with $5\%$ of corrupted data. 
% and is not reliable in the presence of outliers as illustrated in Figure~\ref{fig:MOMVsERMLogReg}. 
\begin{figure}[h]
  \center
  \includegraphics[scale=0.7]{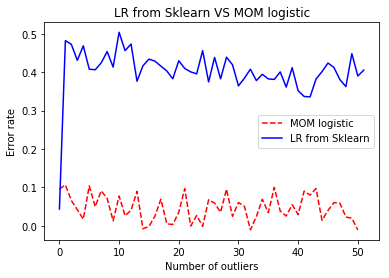}
  \caption{MOM Logistic Regression VS Logistic regression from Sklearn\label{fig:MOMVsERMLogReg} ($p=50$ and $N = 1000$)}
\end{figure}

%Theoretical bounds for the ERM require stronger assumptions on the design. For example, \cite{pierre2017estimation} assume bounded or sub-Gaussian tails for the design to derive sub-Gaussian statistical bounds. We show that none of these assumptions is required for minmax MOM estimators. 
%As for least-squares regression \cite{LugosiMendelson2016, lecue2017robust}, MOM based estimators achieve optimal sub-Gaussian deviation bounds assuming finiteness of a few moments of the design, in a setting where ERM only achieves polynomial deviation bounds \cite{mendelson2014learning, lecue2016regularization}. 
Compared to \cite{LugosiMendelson2016, lecue2017robust}, considering convex-Lipschitz losses instead of the square loss allows to simplify simultaneously some assumptions and the presentation of the results for MOM estimators: $L_2$-assumptions on the noise in \cite{LugosiMendelson2016, lecue2017robust} can be removed and complexity parameters driving risk of ERM and MOM estimators only involve a single stochastic linear process, see Eq.~\eqref{def:function_r} and~\eqref{comp:par} below. 
Also, contrary to the analysis in least-squares regression, the small ball assumption \cite{MR3431642,mendelson2014learning} is not required here. 
%Indeed, MOM estimators achieve exponential deviation bounds under only moments assumptions in least-squares regression under the small ball assumption, which 
Recall that this assumption states that there are absolute constants $\kappa$ and $\beta$ such that, for all $f\in F$, $\bP[|f(X) - f^*(X)|\geq \kappa \norm{f-f^*}_{L_2}]\geq \beta$. 
It is interesting as it involves only moments of order $1$ and $2$ of the functions in $F$. 
However, it does not hold with absolute constants in classical frameworks such as histograms, see \cite{saumard2018optimality,jon} and Section~\ref{app:ass}. 
%This issue was raised, for instance, in \cite{saumard2018optimality,jon}. 
%This paper shows that the small ball assumption is not necessary to obtain optimal sub-Gaussian risk bound for minmax MOM estimators based on convex-Lipschitz losses. 
%The local Bernstein condition is the only stochastic assumption that remains in our first analysis. As a consequence classical problems which were not previously covered by MOM estimators because of the small ball assumption can now be handled. 

Finally,  minmax MOM estimators are studied in a framework where the Bernstein condition is dropped out. In this setting, they are shown to achieve an oracle inequality with exponentially large probability (see Section~\ref{sec:learning_without_assumption}). 
The results are slightly weaker in this relaxed setting: the excess risk is bounded but not the $L_2$ risk and the rates of convergence are ``slow" in $1/\sqrt{N}$ in general. 
Fast rates of convergence in $1/N$ can still be recovered from this general result if a local Bernstein type condition is satisfied though, see Section~\ref{sec:learning_without_assumption} for details.
This last result shows that minmax MOM estimators can be safely used with Lipschitz and convex losses, assuming only that inliers data are independent with enough finite moments to give sense to the results. 

To  approximate minmax MOM estimators, an algorithm inspired from \cite{lecue2017robust, Lecue2018a} is also proposed. 
%As in these papers, the algorithm is only inspired by the minmax MOM construction since a more natural algorithm gets stuck in local extrema, see Section~\ref{sec:simu} for details.
Asymptotic convergence of this algorithm has been proved in \cite{Lecue2018a} under strong assumptions, but, to the best of our knowledge, convergence rates have not been established.
Nevertheless, the simulation study presented in Section~\ref{sec:simu} shows that it has good robustness performances.

The paper is organized as follows. Optimal results for the ERM are presented in Section~\ref{sec_ERM}.
 Minmax MOM estimators are introduced and analysed in Section~\ref{sec:OUI} under a local Bernstein condition and in Section~\ref{sec:learning_without_assumption} without the Bernstein condition. 
A discussion of the main assumptions is provided in Section~\ref{app:ass}. Section~\ref{sec:comparison_between_erm_and_minmax_mom} presents the theoretical limits of the ERM compared to the minmax MOM estimators. Finally, Section~\ref{sec:simu} provides a simulation study where a natural algorithm associated to the  minmax MOM estimator for logistic loss is presented. The proofs of the main theorems are gathered in Sections ~\ref{app:proof},~\ref{lem:rad} and~\ref{proof_bernstein}. 
%to obtain fast rate of convergence and show that it can be verified in practice for quantile, Huber and $L_1$ regression problems and for the logistic classification, see Section \ref{app:ass}. 
\paragraph*{Notations}
Let $\cX, \cY$ be measurable spaces and let $\bar{\cY}$ denote a convex set $\bar{\cY}\supset\cY$. Let $F$ be a class of measurable functions $f:\cX\to\bar{\cY}$ and let $(X,Y)\in\cX\times\cY$ be a random variable with distribution $P$. Let $\mu$ denote the marginal distribution of $X$. For any probability measure $Q$ on $\cX\times\cY$, and any function $g\in L_1(Q)$, let $Qg=\int g(x,y) \mathrm{d}Q(x,y)$.
%Our goal is to approximate $Y$ using functions in $F$. Moreover we have $(X,Y) \sim P$. 
Let $\ell : F\times\cX\times \cY\to \R,\ (f,x,y)\mapsto\ell_f(x,y)$ denote a loss function measuring the error made when predicting $y$ by $f(x)$. It is always assumed that there exists a function $\overline{\ell}:\bar{\cY}\times \cY\to \R$ such that, for any $(f,x,y)\in F\times\cX\times \cY$, $\overline{\ell}(f(x),y)=\ell_f(x,y)$. 
Let $R(f) = P \ell_f = \mathbb{E} \ell_f(X,Y)$ for $f$ in $F$ denote the risk and let $\mathcal{L}_f= \ell_f- \ell_{f^{*}}$ denote the excess loss. 
If $F\subset L_1(P):=L_1$ and Assumption~\ref{assum:lip_conv} holds, an equivalent risk can be defined even if $Y\notin L_1$.
Actually, for any $f_0\in F$, $\ell_f-\ell_{f_0}\in L_1$ so one can define $R(f)=P(\ell_f-\ell_{f_0})$.
W.l.o.g. the set of risk minimizers is assumed to be reduced to a singleton $\argmin_{f\in F}R(f)=\{f^*\}$.
%the best approximation to $Y$ in $F$ in the sense that $P\ell_f(Y,X)$ has to be minimal, where $\ell_f (x,y)$ is a loss function measuring the error of predicting $f(x)$ instead of $y$.  
$f^*$ is called the oracle as $f^{*}(X)$ provides the prediction of $Y$ with minimal risk among functions in $F$. 
%
%When $\cY\subset \R$, the quadratic loss $\ell_f(x,y)=(y-f(x))^2$ is arguably the most classical loss function. 
%%In this case we obtain $f^{*} \in \arg \min_{f \in F} P(Y-f(X))^2$. We want $f(X)$ to be close from $Y$ in a $L_2$ sense. 
%Many losses have been studied over the last 40 years, we will consider only the Lipschitz one (hinge loss, huber loss, logistic loss ...). 
%Hereafter, the set of \emph{oracles} in \eqref{def:oracle} is supposed to be reduced to a singleton $\{f^{*}\}$. 
% To estimate the unknown $f^*$, a dataset $(X_i,Y_i)_{i=1}^N$ of random variables in $\cX\times\cY$ is given. 
% The objective is to provide an estimator $\hat f$ that predicts almost as well as $f^{*}$.
%
%When studying the ERM, the random variables $(X_i,Y_i)_{i=1}^N$ are supposed to be independent and identically distributed. 
%%This assumption is relaxed in the study of MOM estimators into the $\cO \cup \cI$ hypothesis (see Section~\ref{sec:OUI}).
%
%
For any $f$ and $p>0$, let $\|f\|_{L_p} = (P|f|^{p})^{1/p} $, for any $r\geqslant 0$, let $rB_{L_2} = \{ f \in F: \|f\|_{L_2} \leq r \}$ and  $rS_{L_2} = \{ f \in F: \|f\|_{L_2} = r \}$. For any set $H$ for which it makes sense, $H + f^{*} = \{ h+f^{*} \mbox{ s.t } h \in H   \}$, $H - f^{*} = \{ h-f^{*} \mbox{ s.t } h \in H   \}$. For any real numbers $a, b$, we write $a \lesssim b$ when there exists a positive constant $c$ such that $a \leq cb$, when $a \lesssim b$
and $b \lesssim a$, we write $a \asymp b$.
%For any real numbers $a,b$, we write $a \lesssim b$ when there exists a positive constant ${\bf C}$ such that $a \leq{\bf C} b$. When $a \lesssim b$ and $b \lesssim a$, we write $a \asymp b$.
%\textcolor{red}{Ces dernieres notations ont un interet quand $a$ et $b$ sont des fonctions, pour des reels, ça n'est pas super interessant.}

%\subsection{Organisation of the paper}

\section{ERM in the sub-Gaussian framework} \label{sec_ERM}
This section studies the ERM, improving some results from \cite{pierre2017estimation}. In particular, the global Bernstein condition in \cite{pierre2017estimation} is relaxed into a local hypothesis following \cite{MR3526202}.
All along this section, data $(X_i,Y_i)_{i=1}^N$ are independent and identically distributed with common distribution $P$. The ERM is defined for $f\in F \to P_N \ell_f = (1/N)\sum_{i=1}^{N} \ell_f(X_i,Y_i)$ by   
\begin{equation} \label{def_erm}
\hat{f}^{ERM} = \argmin_{f \in F} P_N \ell_f \enspace.
\end{equation}

 %Assumptions~\ref{assum:lip},~\ref{assum:conv_loss} and~\ref{assum:convex} are satisfied in practical applications, both in regression and classification. One can cite the Huber or quantile regression in $\bR^p$ or the Logistic classification in $\bR^p$. \\
%None of the latter three assumptions rely on the probability distribution $P$. 

The results for the ERM are shown under a sub-Gaussian assumption on the class $F-F$ with respect to the distribution of $X$. This result is the benchmark for the following minmax MOM estimators. 
\begin{Definition}
	Let $B \geq 1$. $F$ is called $B$-sub-Gaussian (with respect to $X$) when for all $f\in F$ and all $\lambda >1$
	\begin{equation*}
	\mathbb E \exp( \lambda |f(X)|/ \|f\|_{L_2} ) \leq \exp(\lambda^2 B^2/2)\enspace.
	\end{equation*} 
\end{Definition}
\begin{Assumption}
	\label{ass:sub-gauss} The class $F-F$ is $B$-sub-Gaussian with respect to $X$, where $F - F = \{f_1-f_2: f_1,f_2\in F\}$.
\end{Assumption}
%It remains a central Assumption to obtain fast rate of convergence: the Bernstein condition. However, as it will depends on the complexity parameter of the problem we first present it in the subgaussien framework. \\
Under this sub-Gaussian assumption, statistical complexity can be measured via Gaussian mean-widths. 

\begin{Definition}\label{def:gauss_mean_width}
Let $H\subset L_2$. Let $(G_h)_{h\in H}$ be the canonical centered Gaussian process indexed by $H$ (in particular, the covariance structure of $(G_h)_{h\in H}$  is given by  $\left(\E (G_{h_1}- G_{h_2})^2\right)^{1/2} = \left(\E(h_1(X)-h_2(X))^2\right)^{1/2}$ for all $h_1,h_2\in H$). The \textbf{Gaussian mean-width} of $H$ is $w(H) = \E \sup_{h\in H} G_h$.
\end{Definition}\label{def:fixed_point_gauss}
The complexity parameter driving the performance of $\hat{f}^{ERM}$ is presented in the following definition. 
\begin{Definition}\label{def:function_r}
	The \textbf{complexity parameter} is defined as 
	\begin{equation*}
	r_2(\theta ) \geq \inf \{ r>0 : 32Lw((F-f^*)\cap rB_{L_2}) \leq \theta r^2\sqrt{N}  \}
	\end{equation*}
	where $L>0$ is the Lipschitz constant from Assumption~\ref{assum:lip_conv}.
\end{Definition}
Let $A>0$. In \cite{MR2240689}, the class $F$ is called $(1,A)$-Bernstein if, for all $f\in F$, $P \cL_f^2\leqslant AP\cL_f$. Under Assumption~\ref{assum:lip_conv}, $F$ is $(1,AL^2)$-Bernstein if the following stronger assumption is satisfied
\begin{equation}\label{eq:BernCond}
\|f-f^*\|^2_{L_2}\leqslant AP\cL_f\enspace.
\end{equation}
This stronger version was used, for example in \cite{pierre2017estimation} to study ERM. However, under Assumption~\ref{assum:lip_conv}, Eq~\eqref{eq:BernCond} implies that 
\[
\|f-f^*\|^2_{L_2}\leqslant AP\cL_f\leqslant AL\|f-f^*\|_{L_1}\leqslant AL\|f-f^*\|_{L_2}\enspace.
\]
Therefore, $\|f-f^*\|_{L_2}\leqslant AL$ for any $f\in F$. The class $F$ is bounded in $L^2$-norm, which is restrictive as this assumption is not verified by the class of linear functions for example.
To bypass this issue, the following condition is introduced.
\begin{Assumption}\label{assum:fast_rates} There exists a constant $A > 0$ such that, for all $f\in F$ satisfying $\norm{f-f^*}_{L_2}= r_2(1/(2A) )$, we have $\|f-f^{*}\|_{L_2}^2\leqslant AP\mathcal{L}_f $.
\end{Assumption}
In Assumption~\ref{assum:fast_rates}, Bernstein condition is granted in a $L_2$-sphere centered in $f^*$ only. 
Outside of this sphere, there is no restriction on the excess loss. From the previous remark, it is clear that we necessarily have $r_2(1/(2A) ) \leq AL$ (as long as there exists some $f\in F$ such that $\norm{f-f^*}_2\geq r_2(1/2A)$).
This relaxed assumption is satisfied for many Lipschitz-convex loss functions under moment assumptions and weak assumptions on the noise as it will be checked in Section~\ref{app:ass}. 
The following theorem is the main result of this section.

\begin{Theorem}\label{theo:erm}
	Grant Assumptions~\ref{assum:lip_conv},~\ref{assum:convex},~\ref{ass:sub-gauss} and~\ref{assum:fast_rates}, $\hat{f}^{ERM}$ defined in \eqref{def_erm} satisfies, with probability larger than 
	\begin{equation} \label{eq:proba}
			1- 2\exp\left(-C\frac{Nr_2^2(1/(2A))}{(AL)^2}\right),
	\end{equation}
	\begin{equation} \label{eq:oracle}
	\|\hat{f}^{ERM}-f^{*}\|_{L_2 }^2 \leq  r_2^2(1/(2A))\mbox{ and }
	% \enspace,\\
	P \cL_{\hat{f}^{ERM}}\leq  \frac{r_2^2(1/(2A))}{2A}\enspace,
	\end{equation}
	where $C$ is an absolute constant.
\end{Theorem}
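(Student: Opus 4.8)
The plan is to use convexity to reduce everything to a statement on a single $L_2$-sphere, writing $r^*:=r_2(1/(2A))$ for brevity. Since $\bar\ell(\cdot,y)$ is convex (Assumption~\ref{assum:lip_conv}) and $f\mapsto f(x)$ is linear, the map $f\mapsto P_N\ell_f$ is convex on the convex set $F$ (Assumption~\ref{assum:convex}); moreover $P_N\mathcal L_{f^*}=0$ and, by definition of the ERM, $P_N\mathcal L_{\hat f^{ERM}}\le 0$. Hence if $\hat f^{ERM}$ were outside the ball $\{f\in F:\|f-f^*\|_{L_2}\le r^*\}$, the segment from $f^*$ to $\hat f^{ERM}$ would cross the sphere $\{f\in F:\|f-f^*\|_{L_2}= r^*\}$ at a point $f_0=(1-t)f^*+t\hat f^{ERM}$, and convexity would give $P_N\mathcal L_{f_0}\le t\,P_N\mathcal L_{\hat f^{ERM}}\le 0$. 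Therefore it suffices to prove that, on an event of probability at least \eqref{eq:proba}, $P_N\mathcal L_f>0$ for every $f$ on that sphere; this immediately yields the estimation bound $\|\hat f^{ERM}-f^*\|_{L_2}^2\le (r^*)^2$.

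Next I would write $P_N\mathcal L_f=P\mathcal L_f-(P-P_N)\mathcal L_f$ and control the two terms. The deterministic term is bounded below using the local Bernstein hypothesis: on $\{\|f-f^*\|_{L_2}=r^*\}$, Assumption~\ref{assum:fast_rates} gives $P\mathcal L_f\ge \|f-f^*\|_{L_2}^2/A=(r^*)^2/A$. It then remains to show that the stochastic term satisfies $\sup_{\|f-f^*\|_{L_2}\le r^*}|(P-P_N)\mathcal L_f|\le (r^*)^2/(2A)$ on the good event; since the sphere is contained in this ball, this gives $P_N\mathcal L_f\ge (r^*)^2/A-(r^*)^2/(2A)=(r^*)^2/(2A)>0$ for every $f$ with $\|f-f^*\|_{L_2}=r^*$.

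To bound the empirical process I would first treat its expectation. Symmetrization introduces Rademacher variables, and since $\mathcal L_f(x,y)=\bar\ell(f(x),y)-\bar\ell(f^*(x),y)$ is $L$-Lipschitz in $f(x)$ and vanishes at $f=f^*$, the contraction principle replaces $\mathcal L_f$ by the linear increments $f-f^*$, leaving a Rademacher average over $(F-f^*)\cap r^*B_{L_2}$. Assumption~\ref{ass:sub-gauss} then relates this empirical complexity to the Gaussian mean-width, giving $\E\sup_{\|f-f^*\|_{L_2}\le r^*}|(P-P_N)\mathcal L_f|\lesssim L\,w((F-f^*)\cap r^*B_{L_2})/\sqrt N$. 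By the definition of $r_2$ in Definition~\ref{def:function_r} evaluated at $\theta=1/(2A)$, namely $32Lw((F-f^*)\cap r^*B_{L_2})\le (r^*)^2\sqrt N/(2A)$, the factor $32$ leaves enough room to conclude that this expectation is a small fraction of $(r^*)^2/(2A)$.

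Finally I would upgrade the bound in expectation to a deviation bound via a Talagrand/Bousquet-type concentration inequality for the supremum of the empirical process. The natural variance proxy is $\sup_f P\mathcal L_f^2\le L^2\|f-f^*\|_{L_2}^2=L^2(r^*)^2$, so the variance term at confidence level $t$ is of order $Lr^*\sqrt{t/N}$; requiring it to remain below a fixed multiple of $(r^*)^2/A$ forces $t\asymp N(r^*)^2/(AL)^2$, which is precisely the exponent in \eqref{eq:proba}. On this event the excess risk bound follows at once: since $\hat f^{ERM}$ lies in the ball and $P_N\mathcal L_{\hat f^{ERM}}\le 0$, we get $P\mathcal L_{\hat f^{ERM}}\le \sup_{\|f-f^*\|_{L_2}\le r^*}|(P-P_N)\mathcal L_f|\le (r^*)^2/(2A)$. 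I expect this concentration step to be the main obstacle: because $F-F$ is only sub-Gaussian and not bounded, the summands $\mathcal L_f(X_i,Y_i)$ have unbounded sup-norm, so a plain bounded-differences argument fails and one must control the sup-norm term in Bousquet's inequality (for instance by a truncation argument) while keeping the dominant variance term at the scale $L^2(r^*)^2$ that produces the stated exponent.
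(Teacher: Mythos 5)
Your deterministic argument coincides with the paper's (Proposition~\ref{prop:algebra}): your segment-crossing-the-sphere step is exactly the paper's homogeneity argument $P_N\cL_f\geq\alpha P_N\cL_{f_0}$ written with $t=1/\alpha$ (convexity of each $u\mapsto\bar\ell(u+f^*(X_i),Y_i)$ vanishing at $0$ gives $\psi_i(tu)\le t\psi_i(u)$, which is your convexity inequality for $P_N\cL$), and the way you combine the local Bernstein bound $P\cL_{f_0}\ge (r^*)^2/A$ on the sphere with the event $\sup\{|(P-P_N)\cL_f|:\|f-f^*\|_{L_2}\le r^*\}\le (r^*)^2/(2A)$ to get both conclusions of \eqref{eq:oracle} is the paper's argument verbatim.

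The gap is in the stochastic step, and you have located it yourself but not closed it: the scheme ``bound the expectation, then upgrade to a deviation bound via Bousquet/Talagrand concentration'' does not go through here, because those inequalities require a uniformly bounded class, whereas under Assumption~\ref{ass:sub-gauss} the variables $\cL_f(X_i,Y_i)$ are only sub-Gaussian. The fix you gesture at (truncation, or equivalently an Adamczak-type inequality with a $\psi_1$-envelope term) is not carried out, and it is not routine: the envelope of the localized class enters such bounds and must be shown not to spoil the exponent $N(r^*)^2/(AL)^2$. The paper never performs a concentration-around-the-mean step at all. Under Assumptions~\ref{assum:lip_conv} and~\ref{ass:sub-gauss}, the increment process $(f,g)\mapsto (P-P_N)(\cL_f-\cL_g)$ is sub-Gaussian with respect to the metric $(L/\sqrt{N})\,\|f-g\|_{L_2}$, so a chaining argument, quoted as Lemma~\ref{lem:subgauss} (Lemma~8.1 of \cite{pierre2017estimation}), directly yields: for every $u>0$, with probability at least $1-2\exp(-u^2)$,
\begin{equation*}
\sup_{f,g\in F^\prime}\left|(P-P_N)(\cL_f-\cL_g)\right|\leq \frac{16L}{\sqrt{N}}\left(w(F^\prime)+u\,d_{L_2}(F^\prime)\right)\enspace,
\end{equation*}
that is, the deviation is produced by the chaining itself through the diameter term, not by a concentration inequality. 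Applying this with $F^\prime=F\cap(f^*+r^*B_{L_2})$, $u=\theta\sqrt{N}r^*/(64L)$, $\theta=1/(2A)$, and invoking $32Lw((F-f^*)\cap r^*B_{L_2})\le\theta (r^*)^2\sqrt{N}$ from Definition~\ref{def:function_r} gives the event with probability \eqref{eq:proba}. If you prefer to keep your symmetrization/contraction route for the expectation, you would still need a high-probability bound of this chaining type (or a fully worked-out unbounded-class concentration argument) to finish; as written, your proof is incomplete at its decisive step.
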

Theorem~\ref{theo:erm} is proved in Section~\ref{proof_erm}.
It shows deviation bounds both in $L_2$ norm and for the excess risk, which are both minimax optimal as proved in \cite{pierre2017estimation}.
As in \cite{pierre2017estimation}, a similar result can be derived if the sub-Gaussian  Assumption~\ref{ass:sub-gauss} is replaced by a boundedness in $L_\infty$ assumption. 
An extension of Theorem~\ref{theo:erm} can be shown, where Assumption~\ref{assum:fast_rates} is replaced by the following hypothesis: there exists $\kappa$ such that for all $f\in F$ in a $L_2$-shpere centered in $f^*$,  $\|f-f^{*}\|_{L_2}^{2\kappa}\leqslant AP\mathcal{L}_f $. 
The case $\kappa=1$ is the most classical and its analysis contains all the ingredients for the study of the general case with any parameter $\kappa\geq1$. More general Bernstein conditions can also be considered as in \cite[Chapter~7]{MR3526202}. These extensions are left to the interested reader.

Notice that none of the assumptions~\ref{assum:lip_conv},~\ref{assum:convex},~\ref{ass:sub-gauss} and~\ref{assum:fast_rates} involve the output $Y$ directly. 
All assumptions on $Y$ are done through the oracle $f^*$.
Yet, as will become transparent in the applications in Section~\ref{app:ass}, some assumptions on the distributions of $Y$ are required to check the assumptions of Theorem~\ref{theo:erm}. 
These assumptions are not very restrictive though and Lipschitz losses have been quite popular in robust statistics for this reason.

\section{Minmax MOM estimators}\label{sec:OUI}
This section presents and studies minmax MOM estimators, comparing them to ERM. We relax the sub-Gaussian assumption on the class $F-F$ and the i.i.d assumption on the data $(X_i,Y_i)_{i=1}^N$.

\subsection{The estimators}
The framework of this section is a relaxed version of the i.i.d. setup considered in Section~\ref{sec_ERM}. 
Following \cite{lecue2017learning,lecue2017robust}, there exists a partition $\cO\cup\cI$ of $\{1,\cdots,N\}$ in two subsets unknown to the statistician. 
%The set $\mathcal{O}$ represents the outliers while $\mathcal{I}$ represents the set of informative data, which are relevant for the learning task. 
No assumption is granted on the set of ``outliers" $(X_i,Y_i)_{i\in \cO}$. 
``Inliers", $(X_i,Y_i)_{i \in \mathcal{I}}$, are only assumed to satisfy the following assumption. 
For all $i\in\cI$, $(X_i,Y_i)$ has distribution $P_i$, $X_i$ has distribution $\mu_i$ and for any $p>0$ and any function $g$ for which it makes sense $\|g\|_{L_p(\mu_i)}=(P_i|g|^p)^{1/p}$.
%Let $(X,Y) \sim P$ independent from $(X_i,Y_i)_{i \in \mathcal{I}}$. 
% Data $(X_i,Y_i)_{i \in \mathcal{I}}$ are called informative since the following assumption, that links the moments of those distributions to that of $P$, is always granted.

\begin{Assumption}\label{assum:moments}
$(X_i,Y_i)_{i \in \mathcal{I}}$ are independent and, for any $i\in \cI$,
  $\|f-f^{*}\|_{L_2} = \|f-f^{*}\|_{L_2(\mu_i)} $ and $ P_i\mathcal{L}_f  = P\mathcal{L}_f $.
\end{Assumption}

Assumption~\ref{assum:moments} holds in the i.i.d case but it covers other situations where informative data $(X_i,Y_i)_{i \in \mathcal{I}}$ may have different distributions.
Typically, when $F$ is the class of linear functions on $\R^d$, $F=\{\inr{t,\cdot}, t\in \R^d\}$ and $(X_i)_{i\in \cI}$ are vectors with independent coordinates $(X_{i,j})_{j=1,\ldots,d}$, then Assumption~\ref{assum:moments} is met if the coordinates $(X_{i,j})_{i\in \cI}$ have the same first and second moments for all $j=1,\ldots,d$. \\
% (this is for instance the case of hardware issues altering the database) or they can even be adversarial in the sense that they have been designed to make the task harder (such as, for example, in the case of \textit{fake news}). 
%Note also that even though statistician can only rely on t
%The distributions are only required to induce the same
%%to estimate $f^*$, those data may have  distributions different from that of $(X, Y)$ as long as the
% $L_2$-structure on $F$ and the same risk which is, somehow, a minimal assumption for the problem to make sense. 
%
%We want to replace the empirical risk minimization by the robust MOM estimator. The principle of the MOM estimator is the following. 
Recall the definition of MOM estimators of univariate means. 
%We will then turn to minmax MOM estimators in learning theory. 
Let $(B_k)_{k=1,\ldots,K}$ denote a partition of $\{1,\ldots,N\}$ into blocks $B_k$ of equal size $N/K$ (if $N$ is not a multiple of $K$, just remove some data). 
%Note that it is also possible to use \emph{regular} partitions (see \cite{joly2016robust} for instance). For the sake of simplicity we will only present the study where all blocks have the same size $N/K$. 
%Let $\mathcal{K}$ denote the set of indices of blocks which have not been corrupted by outliers, $\mathcal{K} = \{k \in \{1,...,K \} : B_k \subset \mathcal{I}  \}$. The blocks $(B_k)_{k\in \cK}$ are those containing only informative data. 
%
%\\
%In each block, we compute the empirical risk and take the median of all these local means. More formally, let $B_1,\cdots,B_K$ denote an equi partition of $\{1,\cdots,N\}$ into $K$ blocks of equal size $N/K$. For all loss function $\ell(\cdot,\cdot)$, let us denote 
For any function $f:\cX\times\cY\to\R$ and $k\in \{1,\ldots,K\}$, let $P_{B_k} f = (K/N)\sum_{i \in B_k} f(X_i,Y_i)$.
% where $|B_k| = N/K$ for all $k = 1,\ldots, K $ . 
MOM estimator is the median of these empirical means:
\[
  \MOM{K}{f} = \text{Med}(P_{B_1} f,\cdots,P_{B_K} f)\enspace.
\]
%Intuitively taking the median of the $K$ blocks allows to focus on a block in which no outliers have influenced the empirical risk otherwise the block would have not been selected. Moreover 
The estimator $\MOM{K}{f}$ achieves rate optimal sub-Gaussian deviation bounds, assuming only that $Pf^2<\infty$, see for example \cite{devroye2016sub}.
% that's why we expect more robust results for this estimator than the ERM estimator.  To simplify the MOM estimator is an empirical risk on a wisely chosen block of data. 
The number $K$ is a tuning parameter. The larger $K$, the more outliers are allowed. When $K=1$, $\MOM{K}{f}$ is the empirical mean, when $K=N$, the empirical median. 

%\\
%Obviously there exist constraints on the number of outliers  $|\mathcal{O}|$ and the number of blocks $K$. For instance it is clear that if there is one outliers per block it may ruin the estimation. We need at least $K/2+1$ blocks free from outliers in order to control the median (that will be the case if $K \geq 2 |\mathcal{O}|$). \\

Following \cite{lecue2017robust}, remark that the oracle is also solution of the following minmax problem:
\begin{align*}
f^{*} \in \argmin_{f \in F} P\ell_f = \argmin_{f \in F}  \sup_{g \in F} P(\ell_f-\ell_g)\enspace.
\end{align*}
Minmax MOM estimators are obtained by plugging MOM estimators of the unknown expectations $P(\ell_f-\ell_g)$ in this formula:
\begin{equation}\label{def:MinmaxMOM}
\hat{f} \in \argmin_{f \in F} \sup_{g \in F} \MOM{K}{\ell_f-\ell_g}\enspace.
\end{equation}

The minmax MOM construction can be applied systematically as an alternative to ERM. For instance, it yields a robust version of logistic classifiers. The minmax MOM estimator with $K=1$ is the ERM. 

The linearity of the empirical process $P_N$ is important to use localisation technics and derive ``fast rates'' of convergence for ERM \cite{MR2829871}, improving ``slow rates" derived with the approach of \cite{vapnik1998statistical}, see \cite{MR2051002} for details on ``fast and slow rates''. The idea of the minmax reformulation comes from \cite{MR2906886}, where this strategy allows to overcome the lack of linearity of some alternative robust mean estimators. \cite{lecue2017learning} introduced minmax MOM estimators to least-squares regression.
% and we extend their results to Lipschitz loss learning problems.

%\textcolor{red}{Un reviewer nous a demandé de définir fast rates non?}

%Recall that, in this case, $F = \{ \inr{t, \cdot} : t \in \mathbb{R}^p \} $  and $X$ may be unbounded. 
%It is clear that the estimator might be strongly affected by a single observation. 
%The MOM version will define a robust version of the well-knwon and widely-used logistic regression. \\

%A large part of articles in statistical learning theory are dedicated to establish oracle inequality . Such inequalities claim that with large probability an estimator is close to the oracle in a $L_2$ sense and that the excess of loss $R_P(\hat{f})-R_P(f^{*})$ is small. 
%Following a standard approach in learning theory, see for example \cite{MR2829871}, our objective is to find deterministic functions $r_{1,\delta}(\cdot)$ and $r_{2,\delta}(\cdot)$ such that
%\begin{align*}
%\text{With probability larger than } 1-\delta: \|f^{*}-\hat{f}\|_{L_2} \leq r_{1,\delta}(N) \text{  and  } P\mathcal{L}_{\hat{f}} \leq r_{2,\delta}(N) \enspace.
%\end{align*} 
%where the expectation is conditionnaly to the observations $(X_i,Y_i)_{i=1}^N$. 

%The more decreasing the function the best the estimator is. These quantities are called rates of convergence of the estimator.
%We will present in the next part, the Assumptions we require to obtain fast rate of convergence and show that such Assumptions can be verified in practice for classical procedures. 

\subsection{Theoretical results}\label{sec:MainResults}
\subsubsection{Setting}
The assumptions required for the study of estimator \eqref{def:MinmaxMOM} are essentially those of Section~\ref{sec_ERM} except for Assumption~\ref{ass:sub-gauss} which is relaxed into Assumption~\ref{assum:moments}. Instead of Gaussian mean width, the complexity parameter is expressed as a fixed point of local Rademacher complexities \cite{bartlett2005local,MR2182250,MR2166554}. Let $(\sigma_i)_{i=1,\ldots, N}$ denote i.i.d. Rademacher random variables (uniformly distributed on $\{-1, 1\}$), independent from $(X_i,Y_i)_{i \in \cI}$. 
Let
\begin{equation} \label{comp:par}
\tilde{r}_2(\gamma) \geq \inf \bigg\{ r > 0, \forall J \subset \mathcal{I}: |J| \geq \frac{N}{2}, \mathbb{E}\sup_{f \in F: \|f-f^{*}\|_{L_2 } \leq r} \bigg | {\sum_{i \in J} \sigma_i(f-f^{*})(X_i)}   \bigg | \leq r^2 |J| \gamma \bigg\}\enspace.
\end{equation}

%There is no restriction on the noise $Y-f^{*}(X)$ besides the integrability of $|Y|$ for $f^*$ to be well defined. 
The outputs do not appear in the complexity parameter. This is an interesting feature of Lipschitz losses. It is necessary to adapt the Bernstein assumption to this framework.
\begin{Assumption}\label{assum:fast_rates_MOM} There exists a constant $A > 0$ such that for all $f\in F$ if $\|f-f^{*}\|_{L_2 }^2 = C_{K,r}$ then $\|f-f^{*}\|_{L_2 }^2\leqslant AP\mathcal{L}_f $ where
	\begin{equation}\label{eq:CKr}
	C_{K,r} =  \max\left( \tilde{r}_2^2(1/(575AL)), 864A^2L^2\frac{K}{N}\right) \enspace.
	\end{equation}
\end{Assumption}
Assumptions~\ref{assum:fast_rates_MOM} and~\ref{assum:fast_rates} have a similar flavor as both require the Bernstein condition in a $L_2$-sphere centered in $f^*$ with radius given by the rate of convergence of the associated estimator (see Theorems~\ref{theo:erm} and~\ref{theo:main}).
	For $K \leq (\tilde{r}_2^2(575/(AL))N)/(846A^2L^2)$ the sphere $\{f \in F : ||f-f^{*}||_{L_2} = \sqrt{C_{K,r}} \}$ is a $L_2$-sphere centered in $f^*$ of radius $\tilde{r}_2(575/(AL))$ which can be of order $1/\sqrt{N}$ (see Section~\ref{sec:example_basic}). 
	As a consequence, Assumption~\ref{assum:fast_rates_MOM} holds in examples where the small ball assumption does not (see discussion after Assumption~\ref{ass:L4L2}).

\subsubsection{Main results}
We are now in position to state the main result regarding the statistical properties of estimator~\eqref{def:MinmaxMOM} under a local Bernstein condition. 
\begin{Theorem}\label{theo:main}
    Grant Assumptions~\ref{assum:lip_conv},~\ref{assum:convex},~\ref{assum:moments} and~\ref{assum:fast_rates_MOM} and assume that $|\cO|\leq 3N/7$. Let $\gamma = 1/(575AL) $ and $K \in \big[  7|\cO|/3 ,  N \big]$. The minmax MOM estimator $\hat{f}$ defined in \eqref{def:MinmaxMOM} satisfies, with probability at least
    \begin{equation} \label{eq:proba_MOM}
 			1- \exp(-K/2016),
    \end{equation} 
\begin{equation} \label{eq:oracle}
\|\hat{f}-f^{*}\|_{L_2 }^2 \leq   C_{K,r}  \mbox{ and }
% \enspace,\\
P \cL_{\hat f}\leq \frac{2}{3A} C_{K,r}  \enspace.
\end{equation}
\end{Theorem}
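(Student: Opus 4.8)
The plan is to combine a single localized lower estimate on the median-of-means of the excess loss with the convexity of $F$ (Assumption~\ref{assum:convex}) and of the loss (Assumption~\ref{assum:lip_conv}) in order to force $\hat f$ into the $L_2$-ball of radius $\sqrt{C_{K,r}}$ around $f^*$. Write $r=\sqrt{C_{K,r}}$ and observe that $\MOM{K}{\ell_f-\ell_g}=-\MOM{K}{\ell_g-\ell_f}$, so the minmax criterion $f\mapsto\sup_{g\in F}\MOM{K}{\ell_f-\ell_g}$ always dominates $\MOM{K}{\cL_f}$ (take $g=f^*$) and equals $-\inf_{g\in F}\MOM{K}{\cL_g}$ at $f=f^*$. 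The whole argument therefore reduces to understanding the sign and size of $\MOM{K}{\cL_f}=\text{Med}(P_{B_1}\cL_f,\dots,P_{B_K}\cL_f)$ as $f$ ranges over $F$.

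First I would establish the key event. For $f$ on the sphere $\|f-f^*\|_{L_2}=r$, decompose each block mean as $P_{B_k}\cL_f=P\cL_f+(P_{B_k}-P)\cL_f$. Assumption~\ref{assum:fast_rates_MOM} gives the deterministic lower bound $P\cL_f\geq C_{K,r}/A$ on this sphere, while the Lipschitz bound $|\cL_f|\leq L|f-f^*|$ controls the fluctuation through the purely linear process $(f-f^*)(X_i)$, in which the outputs disappear. Calling a block \emph{good} when $(P_{B_k}-P)\cL_f\geq -P\cL_f/2$, a Chebyshev estimate using $\mathrm{Var}(P_{B_k}\cL_f)\leq (K/N)L^2\|f-f^*\|_{L_2}^2$ together with $C_{K,r}\geq 864A^2L^2K/N$ shows that each clean block is bad with probability at most a small absolute constant. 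Since $K\geq 7|\cO|/3$ forces at most $3K/7$ blocks to meet an outlier, a bounded-differences concentration on the number of bad blocks, made uniform over the sphere by symmetrization and the definition of $\tilde r_2$ at $\gamma=1/(575AL)$ (applicable because $r\geq\tilde r_2(1/(575AL))$), yields on an event of probability at least $1-\exp(-K/2016)$ that strictly more than $K/2$ blocks are good for every $f$ on the sphere. Hence $\MOM{K}{\cL_f}\geq c\,C_{K,r}>0$ there.

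Next I would propagate this sphere estimate to all of $F$ by convexity. Because $\bar\ell(\cdot,y)$ is convex and $t\mapsto (1-t)f^*+tf$ is affine, each $t\mapsto P_{B_k}\cL_{(1-t)f^*+tf}$ is convex and vanishes at $t=0$, hence lies above its chord; as this gives, for every $g$ beyond the sphere, $P_{B_k}\cL_g\geq(\|g-f^*\|_{L_2}/r)\,P_{B_k}\cL_{g_r}$ for all $k$ with the \emph{same} positive factor (where $g_r$ is the crossing point), the monotonicity of the median yields $\MOM{K}{\cL_g}\geq(\|g-f^*\|_{L_2}/r)\MOM{K}{\cL_{g_r}}$. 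Thus the positive lower bound persists outside the ball, while for $g$ inside the ball the same fluctuation control gives $\MOM{K}{\cL_g}\geq -c'C_{K,r}$. Inserting these into the identities of the first paragraph bounds $\sup_{g}\MOM{K}{\ell_{f^*}-\ell_g}$ from above by a quantity strictly smaller than the value $\MOM{K}{\cL_{\hat f}}$ would be forced to take if $\hat f$ lay on or outside the sphere; since $\hat f$ minimizes the criterion, this contradiction gives $\|\hat f-f^*\|_{L_2}^2\leq C_{K,r}$. The excess-risk bound $P\cL_{\hat f}\leq\frac{2}{3A}C_{K,r}$ then follows by re-inserting $\|\hat f-f^*\|_{L_2}\leq r$ into the multiplier bound $\MOM{K}{\cL_{\hat f}}\geq P\cL_{\hat f}-c''C_{K,r}$ and using the threshold on the criterion, the numerical constants $575$, $864$ and $2016$ being precisely what is needed to close these inequalities.

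I expect the main obstacle to be the uniform block-counting step: controlling simultaneously over the entire sphere the number of blocks on which the centered block mean stays above $-P\cL_f/2$, with explicit constants, while isolating the outlier blocks. This requires carefully combining the per-block variance estimate, a bounded-differences (or contraction and symmetrization) concentration for the counting functional, and the fixed-point definition of $\tilde r_2$, so that the small per-block failure probability survives the supremum over $f$ and still leaves a strict majority of good blocks. The convexity propagation and the passage from the $L_2$ bound to the excess-risk bound are comparatively routine once this estimate is in hand.
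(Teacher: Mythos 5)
Your proposal is correct and follows essentially the same route as the paper's own proof: the same localization on the $L_2$-sphere via the local Bernstein condition, the same convexity/homogeneity argument ($\psi_i$ convex with $\psi_i(0)=0$, so block means scale up outside the sphere and the median inherits the bound), the same criterion comparison $T_K(\hat f)\le T_K(f^*)$ to trap $\hat f$ in the ball, the same extraction of the excess-risk bound from the criterion plus the good-block control, and the same stochastic machinery (per-block variance/Chebyshev with $C_{K,r}\ge 864A^2L^2K/N$, exclusion of the at most $3K/7$ outlier blocks, bounded differences, symmetrization and contraction, and the fixed point $\tilde r_2(1/(575AL))$). The one point to tighten is that your concentration event must be stated uniformly over the whole ball $\{f\in F:\|f-f^*\|_{L_2}\le \sqrt{C_{K,r}}\}$ rather than only the sphere, since the interior bound $\MOM{K}{\cL_g}\ge -c'C_{K,r}$ you invoke cannot be recovered from sphere control by convexity (the homogeneity inequality goes the wrong way for interior points); this is exactly how the paper defines $\Omega_K$, and since $\tilde r_2$ is itself a fixed point of the complexity of the ball, your stochastic argument applies verbatim.
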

Suppose that $K = \tilde{r}_2^2(\gamma)N $, which is possible as long as $|\cO|\lesssim N \tilde{r}_2^2(\gamma)$. The deviation bound is then of order $\tilde{r}_2^2(\gamma)$ and the probability estimate $1-\exp(-N \tilde{r}_2^2(\gamma)/2016)$. Therefore, minmax MOM estimators achieve the same statistical bound with the same deviation as the ERM as long as $\tilde{r}_2^2(\gamma)$ and $r_2(\theta)$ are of the same order. 
Using generic chaining \cite{MR3184689}, this comparison is true under Assumption~\ref{ass:sub-gauss}. 
It can also be shown under weaker moment assumption, see \cite{MR3645129} or the example of Section~\ref{sec:example_basic}.
 
When $\tilde{r}_2^2(\gamma)\asymp r_2(\theta)$, the bounds are rate optimal as shown in \cite{pierre2017estimation}. 
This is why these bounds are called rate optimal sub-Gaussian deviation bounds. 
While these hold for ERM in the i.i.d. setup with sub-Gaussian design in the absence of outliers (see Theorem~\ref{theo:erm}), they hold for minmax MOM estimators in a setup where inliers may not be i.i.d., nor have sub-Gaussian design and up to $N \tilde{r}_2^2(\gamma)$ outliers may have contaminated the dataset.
%If there are not too many outliers, less than \textit{(number of observations)} $*$ (\textit{optimal rate in the i.i.d. setup}), the properly calibrated minmax MOM estimator achieves therefore the same optimal statistical bounds as the ERM in the i.i.d. setup with sub-Gaussian design. 

%If $K \gtrsim N \tilde{r}_2^2(\gamma)$, then the error bound is deteriorated while the probability estimate is tighter. 
%As often, there is a trade off between accuracy and probability with an optimal choice for $K \asymp r_2^2(\gamma)N $. 
%
%A restriction in Theorem~\ref{theo:main} is that both the deviation bounds and the minmax MOM estimator $\hat f$ depend on $K$. 
This section is concluded by presenting an estimator achieving \eqref{def:MinmaxMOM} simultaneously for all $K$.
% satisfying the same statistical bounds with the same deviation estimate as the minmax MOM estimator $\hat f$ in Theorem~\ref{theo:main} up to absolute constants. 
 For all $K\in\{1, \ldots, N\}$ and $f\in F$, define $T_K(f)=\sup_{g\in F} \MOM{K}{\ell_f-\ell_g} $ and let 
\begin{equation}\label{eq:confidence_sets}
\hat R_K = \left\{g\in F: T_K(g)\leq (1/3A) C_{K,r}\right\}\enspace.
\end{equation}
Now, building on the Lepskii's method, define a data-driven number of blocks 
\begin{equation}\label{eq:data_number_blocks}
\hat K  = \inf\left(K\in\{1, \ldots, N\}: \bigcap_{J=K}^N \hat R_J \neq \emptyset\right)
\end{equation}
and let $\tilde f$ be such that
\begin{equation}\label{eq:adapt_esti_K}
\tilde f \in \bigcap_{J=\hat K}^N \hat R_J\enspace.
\end{equation}

\begin{Theorem}\label{theo:MOM_lepski}
    Grant Assumptions~\ref{assum:lip_conv},~\ref{assum:convex},~\ref{assum:moments} and~\ref{assum:fast_rates_MOM} and assume that $|\cO|\leq 3N/7$. Let $\gamma = 1/(575AL) $. The estimator $\tilde{f}$ defined in \eqref{eq:adapt_esti_K} is such that for all $K \in \big[ 7|\cO|/3 ,  N \big]$, with probability at least
    \begin{equation*}
 			1- 4\exp(-K/2016),
    \end{equation*} 
\begin{equation*} 
\|\tilde{f}-f^{*}\|_{L_2 }^2 \leq   C_{K,r}  \mbox{ and }
% \enspace,\\
P \cL_{\tilde f}\leq \frac{2}{3A} C_{K,r}  \enspace.
\end{equation*}
\end{Theorem}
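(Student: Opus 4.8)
The plan is to run Lepskii's method on top of Theorem~\ref{theo:main}, treating the confidence regions $\hat R_J=\{g\in F:T_J(g)\le (1/3A)C_{J,r}\}$ as the basic building blocks. The starting point is to extract from the proof of Theorem~\ref{theo:main} two properties of these regions, valid at each scale $J$ on a suitable high-probability event. Fix the target $K\in[7|\cO|/3,N]$. I would construct an event $\Omega$ on which, for every $J\in\{K,\ldots,N\}$, one has (P1) $f^{*}\in\hat R_J$, that is $T_J(f^{*})\le (1/3A)C_{J,r}$; and (P2) every $g\in\hat R_J$ satisfies $\|g-f^{*}\|_{L_2}^2\le C_{J,r}$ and $P\cL_g\le (2/3A)C_{J,r}$. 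Both statements are exactly the heart of the proof of Theorem~\ref{theo:main}: (P1) is the control of $T_J$ at the oracle (a majority of the $J$ blocks keep $P_{B_k}\cL_g$ above $-(1/3A)C_{J,r}$, uniformly in $g$), and (P2) is the localization step showing that a small value of $T_J$ forces proximity to $f^{*}$ both in $L_2$ and in excess risk. In particular (P2) at $J=K$ reproduces the conclusion of Theorem~\ref{theo:main} for any point of $\hat R_K$.

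On $\Omega$ the selection rule then behaves deterministically as desired. By (P1), $f^{*}\in\bigcap_{J=K}^{N}\hat R_J$, so this intersection is nonempty and, by the definition \eqref{eq:data_number_blocks} of $\hat K$, one gets $\hat K\le K$. Consequently $K\in\{\hat K,\ldots,N\}$, so the selected estimator satisfies $\tilde f\in\bigcap_{J=\hat K}^{N}\hat R_J\subset\hat R_K$ by \eqref{eq:adapt_esti_K}. Applying (P2) with $J=K$ to $g=\tilde f$ yields $\|\tilde f-f^{*}\|_{L_2}^2\le C_{K,r}$ and $P\cL_{\tilde f}\le (2/3A)C_{K,r}$, which is the claimed bound. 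Note that this argument uses no property of $\hat f$ from \eqref{def:MinmaxMOM} directly; it only uses the validity of the regions, and it works for every admissible $K$ at once once $\Omega$ is in force.

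It remains to bound $\bP(\Omega^c)$, and this is where the main difficulty lies. Property (P2) at the single scale $K$ holds with probability at least $1-\exp(-K/2016)$, exactly as in Theorem~\ref{theo:main}. The delicate point is to secure (P1) \emph{simultaneously} for all $J\in\{K,\ldots,N\}$: since each scale uses a different partition into blocks, a naive union bound over $J$ would cost a factor proportional to the number of scales and would degrade the exponential estimate. Instead I would establish (P1) for the whole range at once. Failure of (P1) at scale $J$ forces, for some $g$, at least half of the $J$ blocks to satisfy $P_{B_k}\cL_g<-(1/3A)C_{J,r}$; since $C_{J,r}$ is nondecreasing in $J$ by \eqref{eq:CKr}, this threshold is most easily violated at the smallest admissible scale $J=K$. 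The oracle-membership constraint is therefore tightest at $J=K$, which lets the whole family $\{f^{*}\notin\hat R_J\}_{K\le J\le N}$ be controlled by a single block-counting concentration calibrated at $K$, of probability $\exp(-K/2016)$ up to a constant factor. Combining this with the level-$K$ event for (P2) produces the stated $1-4\exp(-K/2016)$.

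The upshot is that the Lepskii selection itself is purely deterministic on $\Omega$, and the only genuine addition to Theorem~\ref{theo:main} is the uniform-over-scales version of the oracle-membership event (P1). That uniformity—controlling a family of median-of-means functionals indexed by the number of blocks through one concentration inequality rather than a union bound—is the step I expect to be the main obstacle.
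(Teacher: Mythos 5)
Your deterministic selection argument (on the good event, $f^*\in\bigcap_{J=K}^N \hat R_J$ forces $\hat K\leq K$, hence $\tilde f\in \hat R_K$, and membership in $\hat R_K$ yields both the $L_2$ and the excess-risk bounds at scale $K$) is exactly the paper's proof. The genuine gap is in the stochastic step that you yourself flag as the main obstacle: securing $f^*\in\hat R_J$ simultaneously for all $J\in\{K,\ldots,N\}$ via ``a single block-counting concentration calibrated at $K$''. This cannot work as described. For each $J$, the functional $T_J(f^*)=\sup_{g\in F}\MOM{J}{\ell_{f^*}-\ell_g}$ is a median of empirical means over a \emph{different} partition of the data into $J$ blocks of size $N/J$; the bad event at scale $J$ (for some $g$, at least $J/2$ of the $J$-blocks satisfy $P_{B_k}\cL_g<-(1/3A)C_{J,r}$) is neither contained in nor implied by the bad event at scale $K$. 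Monotonicity of $J\mapsto C_{J,r}$ orders the thresholds, but it says nothing about the fluctuations of the block means, which \emph{grow} with $J$ (smaller blocks, variance of a block mean of order $J/N$); this threshold-versus-fluctuation trade-off is precisely what Proposition~\ref{prop:sto_MOM} balances separately at each scale, and it cannot be imported from scale $K$ alone. So your uniform-over-scales property (P1) remains unproved.

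Moreover, the reason you give for rejecting a union bound is mistaken, and correcting it is what repairs the proof. Proposition~\ref{prop:sto_MOM} gives the scale-dependent estimate $\bP[\Omega_J^c]\leq \exp(-J/2016)$ for the event $\Omega_J$ of \eqref{eq:event_omegaK} at scale $J$, so
\begin{equation*}
\bP\Big[\bigcup_{J=K}^N\Omega_J^c\Big]\leq \sum_{J=K}^N e^{-J/2016}\leq \frac{e^{-K/2016}}{1-e^{-1/2016}}\enspace,
\end{equation*}
a geometric series whose sum is a constant multiple of $e^{-K/2016}$: the exponential decay in $K$ is not degraded, only the leading constant changes. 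Intersecting the events $\Omega_J$, $J=K,\ldots,N$, of Theorem~\ref{theo:main} and applying this union bound is precisely the paper's argument (the paper states the resulting constant as $4$, which is in fact optimistic, since the geometric sum yields a constant of order $2016$; but the structure is exactly the union bound you dismissed). Replacing your claimed single-scale concentration by this union bound turns your proposal into a complete proof.
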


Theorem~\ref{theo:MOM_lepski} states that $\tilde{f}$ achieves the results of Theorem~\ref{theo:main} simultaneously for all $K\geq 7 |\cO|/3$. 
This extension is useful as the number $|\cO|$ of outliers is typically unknown in practice. 
However, contrary to $\hat{f}$, the estimator $\tilde{f}$ requires the knowledge of $A$ and $\tilde{r}(\gamma)$.
These parameters allow to build confidence regions for $f^*$, which is necessary to apply Lepski's method.
Similar limitations appear in least-squares regression \cite{lecue2017robust} and even in the basic problem of univariate mean estimation. 
In this simpler problem, it can be shown that one can build sub-Gaussian estimators depending on the confidence level (through $K$) under only a second moment assumption.
On the other hand, to build estimators achieving the same risk bounds simultaneously for all $K$, more informations on the distribution of the data are required, see \cite[Theorem 3.2]{devroye2016sub}. 
In particular, the knowledge of the variance, which allows to build confidence intervals for the unknown univariate mean, is sufficient.
The necessity of extra-information to obtain adaptivity with respect to $K$ is therefore not surprising here.

\subsubsection{Some basic examples}\label{sec:example_basic}
The following example illustrates the optimality of the rates provided in Theorem~\ref{theo:main} even under a simple $L_2$-moment assumption.
%,  computes the rate $r_2(\gamma)$
% in the case where $F = \{ \inr{t,\cdot} : t \in \mathbb{R}^p  \}$ is the convex set of linear functions in $\mathbb R^p$. 
%The computation of the convergence rate yields to fast rate of convergence in $\frac{Rank(\Sigma)}{N}$ where $\Sigma \in M_{p \times p}$ is the covariance matrix and $N$ is the number of observations. 

\begin{Lemma}[\cite{MR2329442}]\label{Lem:LinReg}
In the $\mathcal{O} \cup  \mathcal{I}$ framework with $F = \{ \inr{t,\cdot} : t \in \mathbb{R}^d  \}$, we have $\tilde{r}_2^2(\gamma) \leq  \text{Rank}(\Sigma)/(2\gamma^2N)$, where $\Sigma=\mathbb{E} [XX^T]$ is the $d \times d$ covariance matrix of $X$.
\end{Lemma}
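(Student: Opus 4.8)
The plan is to reduce the expected supremum appearing in the definition \eqref{comp:par} of $\tilde r_2(\gamma)$ to the supremum of a single linear functional, which can be evaluated exactly by a dual-norm computation. Since $F=\{\inr{t,\cdot}:t\in\R^d\}$ is a linear space containing $f^*$, the difference $f-f^*$ ranges over all linear functionals $\inr{u,\cdot}$, $u\in\R^d$, and the constraint reads $\|f-f^*\|_{L_2}^2=u^{T}\Sigma u\le r^2$. Here I use Assumption~\ref{assum:moments}: the identity $\|f-f^*\|_{L_2}=\|f-f^*\|_{L_2(\mu_i)}$ for every linear $f-f^*$ forces $\E[X_iX_i^{T}]=\Sigma$ for all $i\in\cI$, so every inlier shares the $L_2$-geometry induced by $\Sigma$. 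Setting $Z=\sum_{i\in J}\sigma_i X_i$, the inner quantity to control becomes
\[
\E\sup_{u:\,u^{T}\Sigma u\le r^2}\bigl|\inr{u,Z}\bigr|\enspace.
\]

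The step I expect to be delicate, and which is responsible for the rank rather than the ambient dimension $d$ appearing, is handling a possibly singular $\Sigma$. Any $v\in\ker\Sigma$ satisfies $\E\inr{v,X_i}^2=v^{T}\Sigma v=0$, hence $\inr{v,X_i}=0$ almost surely for $i\in\cI$; thus $Z\in\mathrm{range}(\Sigma)$ a.s., and both the constraint and $\inr{u,Z}$ depend only on the component of $u$ in $\mathrm{range}(\Sigma)$. Restricting to that subspace, where $\Sigma$ is invertible, Cauchy--Schwarz in the $\Sigma$-inner product gives $\inr{u,Z}\le\sqrt{u^{T}\Sigma u}\,\sqrt{Z^{T}\Sigma^{+}Z}\le r\sqrt{Z^{T}\Sigma^{+}Z}$, with $\Sigma^{+}$ the Moore--Penrose inverse and with equality attained at a suitable $u$; by symmetry of the constraint set the absolute value is immaterial, so the supremum equals $r\sqrt{Z^{T}\Sigma^{+}Z}$.

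Next I would take expectations using Jensen's inequality and then compute a trace:
\[
\E\sqrt{Z^{T}\Sigma^{+}Z}\le\sqrt{\E\bigl[Z^{T}\Sigma^{+}Z\bigr]}=\sqrt{\operatorname{tr}\bigl(\Sigma^{+}\,\E[ZZ^{T}]\bigr)}\enspace.
\]
Because the $\sigma_i$ are centred, independent, and independent of the $X_i$, the cross terms vanish and $\E[ZZ^{T}]=\sum_{i\in J}\E[X_iX_i^{T}]=|J|\,\Sigma$; consequently $\operatorname{tr}(\Sigma^{+}\,|J|\Sigma)=|J|\,\operatorname{tr}(\Sigma^{+}\Sigma)=|J|\,\text{Rank}(\Sigma)$, since $\Sigma^{+}\Sigma$ is the orthogonal projection onto $\mathrm{range}(\Sigma)$. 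Hence the expected supremum is at most $r\sqrt{|J|\,\text{Rank}(\Sigma)}$.

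Finally I would insert this into the fixed-point condition. The requirement $r\sqrt{|J|\,\text{Rank}(\Sigma)}\le r^2|J|\gamma$ is equivalent to $r\ge\sqrt{\text{Rank}(\Sigma)}/(\gamma\sqrt{|J|})$, a bound decreasing in $|J|$, so the binding case is the smallest admissible block $|J|\ge N/2$, i.e.\ $|J|\asymp N$. This yields an admissible $r$ of order $\sqrt{\text{Rank}(\Sigma)/N}/\gamma$, and therefore $\tilde r_2^2(\gamma)\lesssim \text{Rank}(\Sigma)/(\gamma^2 N)$, which is the announced bound up to the universal numerical constant recorded in \cite{MR2329442}. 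The only genuinely non-routine ingredient is the rank-versus-dimension bookkeeping for degenerate $\Sigma$ described in the second paragraph; everything else is the standard Jensen-plus-trace estimate for a Rademacher linear process.
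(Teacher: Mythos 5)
Your proof is correct and follows essentially the same route as the paper's: reduction to a linear Rademacher process over the ellipsoid $\{u : u^T\Sigma u\le r^2\}$, Cauchy--Schwarz to dualize that constraint (your Moore--Penrose/trace formulation is just a coordinate-free rewriting of the paper's explicit SVD computation, with the same kernel-versus-range bookkeeping that produces $\mathrm{Rank}(\Sigma)$ instead of $d$), then Jensen plus the second-moment identity $\mathbb{E}[ZZ^T]=|J|\Sigma$, and the identical fixed-point arithmetic with the binding case $|J|=N/2$. One remark: the honest constant your derivation gives, $\tilde r_2^2(\gamma)\le 2\,\mathrm{Rank}(\Sigma)/(\gamma^2 N)$, is exactly what the paper's own computation yields as well, so the factor $1/2$ appearing in the lemma's statement (rather than $2$) is a slip in the paper's final inequality, not a gap in your argument.
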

The proof of Lemma~\ref{Lem:LinReg} is recalled in Section \ref{lem:rad} for the sake of completeness. Lemma~\ref{Lem:LinReg} grants only the existence of a second moment for $X$ even though the rate obtained $\text{Rank}(\Sigma)/(2\gamma^2N)$ is the same as the one we would get under a sub-Gaussian assumption given that $r_2(\theta)\sim \text{Rank}(\Sigma)/(2\theta^2N)$. Moreover, Section \ref{app:ass} shows that Assumptions~\ref{assum:fast_rates} and~\ref{assum:fast_rates_MOM} are satisfied when $F= \{ \inr{t,\cdot} : t \in \mathbb{R}^d  \}$ and $X$ is a vector with i.i.d. entries having only a few finite moments. Theorem \ref{theo:main} applies therefore in this setting and the Minmax MOM estimator \eqref{def:MinmaxMOM} achieves the optimal fast rate of convergence $ \text{Rank}(\Sigma)/N$. This shows that when the model is the entire space $\bR^d$, the results for the ERM from Theorem~\ref{theo:erm} obtained under a sub-Gaussian assumption is the same as the one for the minmax MOM from Theorem~\ref{theo:main} under only weak moment assumption.

However, Lemma~\ref{Lem:LinReg} does not describe a typical situation. Having  $\tilde{r}_2^2(\gamma)$ of the same order as $r_2^2(\theta)$ under only a second moment assumption is mainly happening on large models such as the entire space $\bR^d$. For smaller size models such as the $B_1^d$-ball (the unit ball of the $\ell_1^d$-norm), the picture is different: $\tilde{r}_2^2(\gamma)$ should be bigger than $r_2^2(\theta)$ unless $X$ has enough moment. To make this statement simple let us consider the case $N=1$. In that case, we have $r_2^2(\theta) \sim \sqrt{\log d}$. Let us now describe $\tilde{r}_2^2(\gamma)$ under various moment assumptions on $X$ to see when $\tilde r_2^2(\gamma)$ compares with $\sqrt{\log d}$. 

Let $X=(x_j)_{j=1}^d$ be a random vector. It follows from Equation~(3.1) in \cite{MR2373017} that
\begin{equation*}
 \E \sup_{t\in B_1^d\cap r B_2^d} |\inr{t, X}| \lesssim
\left\{
\begin{array}{cc}
r\bE\left(\sum_{j=1}^d x_j^2\right)^{1/2} & \mbox{ if } r\leq 1/\sqrt{d}\\
 r \bE\left(\sum_{j=1}^{k}(x_j^*)^2\right)^{1/2} & \mbox{ if } 1/\sqrt{d}\leq r\leq 1\\
 \bE\max_{j=1, \ldots, d} |x_j| & \mbox{ if } r\geq1.
\end{array}\right.
\end{equation*}where $k = \lceil 1/r^2\rceil$  and $x_1^*\geq \ldots\geq x_d^*$ is a non-increasing rearrangement of the absolute values of the coordinates $x_j,j=1,\ldots,d$ of $X$. 
Assume that $x_1, \ldots, x_d$ are i.i.d. distributed like $x$ such that $\E x=0$ and $\E x^2=1$. Assume that $x$ has $2p$ moments, for $p \geq 1$ and let $c_0$ be such that  $\bE[x^{2p}] \leq c_0$. Then, using Jensen's inequality, we obtain
\begin{align*}
\left(\bE \left[\frac{1}{k}\sum_{j=1}^k (x_j^*)^2\right] \right)^{p}   \leq  \frac{1}{k}\sum_{j=1}^k \bE [(x_j^*)^{2p}]\leq \frac{1}{k}\sum_{j=1}^d \bE [x_j^{2p}] \leq \frac{c_0 d }{k}.
\end{align*}
It follows that 
\begin{align*}
\bE\left[ \left( \frac{1}{k}\sum_{j=1}^k (x_j^*)^2 \right)^{1/2} \right] \leq \left(\bE\left[ \frac{1}{k}\sum_{j=1}^k (x_j^*)^2\right]\right)^{1/2}\leq  \left(\frac{c_0d }{k}  \right)^{1/(2p)}   \leq  \left(\frac{c_0d}{k}\right)^{1/(2p)}\leq (c_0 d r^2)^{1/(2p)}\enspace.
\end{align*}
Hence, 
\begin{equation*}
 \E \sup_{t\in B_1^d\cap r B_2^d} |\inr{t, X}| \lesssim 
\left\{
\begin{array}{cc}
r\sqrt{d}& \mbox{ if } r\leq 1/\sqrt{d}\\
 (c_0 d r^2)^{1/(2p)} & \mbox{ if } 1/\sqrt{d}\leq r\leq 1\\
 (c_0 d)^{1/(2p)} & \mbox{ if } r\geq1.
\end{array}\right.\enspace.
\end{equation*}
Assume that $f^*=\inr{t^*,\cdot}$, with $t^*=0$.
Then,
\begin{equation*}
\tilde r_2(\gamma) = \inf\left\{r>0: \E \sup_{t\in B_1^d\cap r B_2^d} |\inr{t, X}|\leq \gamma r^2\right\}\lesssim (c_0 d)^{1/(4p)}/\sqrt{\gamma}\enspace.
\end{equation*}
In particular, 
\[
\begin{cases}
 \tilde r_2(\gamma)\asymp 1 &\text{ when }p\geq \log(c_0d),\\
1\lesssim r_2(\gamma)\lesssim \sqrt{\log d} &\text{ when }\log(c_0d)/\log\log d\leq p \leq \log(c_0d),\\
\tilde r_2(\gamma)\asymp d^{1/(4p)} &\text{ when }p\leq \log(c_0d)/\log\log d.
\end{cases}
\]
Let us now show that these estimates are sharp by considering $x=\eps(1+R \eta)$ where $\eps$ is a Rademacher variable, $\eta$ is a Bernouilli variable (independent of $\eps$) with mean $\delta=1/d$  and $R=d^{1/(4p)}$. We have $\bE x=0$ and $\bE x^2=1+R\delta\leq 2$ because $R\delta\leq1$ when $p\geq1$. Let $x_j=\eps_j(1+R \eta_j), j=1, \ldots, d$ be i.i.d. copies of $x$. We have 
\begin{align*}
  &\bE\max_{j=1, \ldots, d} |x_j|\geq (1+R) \bP\left[\max_{j=1, \ldots, d} |x_j|\geq 1+R\right]\\
  & = (1+R)\left(1-\bP[\eta_j=0,\forall j=1, \ldots, d]\right) = (1+R)(1-(1-\delta)^d)\geq (1+R)e^{-1}\gtrsim d^{1/(4p)}.
   \end{align*}As a consequence, for all $1\leq r\lesssim d^{1/(8p)}$, 
\begin{equation*}
 \E \sup_{t\in B_1^d\cap r B_2^d} |\inr{t, X}| = \bE\max_{j=1, \ldots, d} |x_j|> r^2
\end{equation*}and so $\tilde r_2^2(\gamma)\gtrsim d^{1/(4p)}$. As a consequence, under only a $L_{2p}$ moment assumption one cannot have $\tilde r_2^2(\gamma)$ better than $d^{1/(4p)}$.

As a consequence,  $\tilde r_2(\gamma)$ can be much larger than $r_2(\theta)$ when $x$ has less than $\log(c_0d)/\log(\log d)$ moments, for instance, $\tilde r_2^2(\gamma)$ can be of the order of $d^{1/8}$ when $x$ has only $2$ moments. This picture is different from the one given by Lemma~\ref{Lem:LinReg} where we were able to get equivalence between $\tilde r_2(\gamma)$ and $r_2(\theta)$ only under a second moment assumption.

\section{Relaxing the Bernstein condition} % (fold)
\label{sec:learning_without_assumption}
This section shows that minmax MOM estimators satisfy sharp oracle inequalities with exponentially large deviation under minimal stochastic assumptions insuring the existence of all objects. 
These results are slightly weaker than those of the previous section: the $L_2$ risk is not controlled and only slow rates of convergence hold in this relaxed setting.
However, the bounds are sufficiently precise to imply fast rates of convergence for the excess risk as in Theorems~\ref{theo:main}  if a slightly stronger Bernstein condition holds. 
%\textcolor{red}{C'est un peu bizarre d'ecrire les choses comme ca. On se demande pourquoi on a ecrit les theoremes precedent s'ils ne sont que des corollaires immediats du suivant}

%First, remark that the assumption $Y\in L_1$ can be removed. 
%Actually, if Assumption~\ref{assum:lip_conv} holds and $F\subset L_1$, for any $f_0\in F$, $P(\ell_{f}-\ell_{f_0})$ is well defined since $f_0-f\in L_1$. Therefore $f^*\in F$ can be defined as a minimizer of $f\mapsto P (\ell_{f}-\ell_{f_0})$ over $F$. As a consequence, in the case of  Lipschitz loss functions, all the object are well defined under the only assumption that $F\subset L_1(\mu)$ and no need to assume anything on $Y$. 
Given that data may not have the same distribution as $(X,Y)$, the following relaxed version of Assumption~\ref{assum:moments} is introduced.
%
%\textcolor{red}{Comme on va devoir parler de $\text{Var}_{P_i}(\cL_f)$, ne vaudrait-il pas mieux supposer des le debut que $f-f^*\in L_2(\mu)\cap L_2(\mu_i)$? \\
%J'eviterais de parler d'absence d'hypotheses. On a besoin d'independance et de quelques moments quand meme. Pour pas mal de probabilistes qui regardent des resultats asymptotiques, ce genre d'hypotheses est deja beaucoup et ce genre de titre peut paraitre un peu pompeux.}
%\blue{Si les variances ne sont pas finies on aura $r_2=+\infty$ et le résultat reste vrai. Ce qui compte c'est plus les hypothèses dont on a besoin pour faire marcher les preuves. D'un autre coté ca ne me derange pas si on suppose F dans $L_2$ plutôt que $L_1$ mais c'est pas absolument necessaire d'un point de vue mathematiques.}

\begin{Assumption}\label{assum:moments2}
$(X_i,Y_i)_{i \in \mathcal{I}}$ are independent and for all $i\in\cI$, $(X_i,Y_i)$ has distribution $P_i$, $X_i$ has distribution $\mu_i$. For any $i\in \cI$, $F\subset L_2(\mu_i)$  and $ P_i\mathcal{L}_f  = P\mathcal{L}_f$ for all $f\in F$.
\end{Assumption}

When Assumption~\ref{assum:fast_rates_MOM} does not necessary hold, the localization argument has to be modified. 
Instead of the $L_2$-norm, the excess risk $f\in F \to P\cL_f$ is used to define neighborhoods around $f^*$. 
The associated complexity is then defined for all $\gamma>0$ and $K\in \{1,\cdots, N  \}$ by
\begin{equation} \label{eq:comp_par_local_excess_loss}
\bar{r}_2(\gamma)  \geq \inf \bigg\{ r > 0: \max\Big(\frac{E(r)}{\gamma}, \sqrt{1536}V_K(r)\Big)\leq r^2 \bigg\}
\end{equation}
where 
\begin{gather*}
E(r) = \sup_{J \subset \mathcal{I}:|J|\geq N/2}\mathbb{E}\sup_{f \in F: P\cL_f \leq r^2} \bigg | \frac{1}{ |J|}\sum_{i \in J} \sigma_i(f-f^{*})(X_i)   \bigg |\enspace,\\
\mbox{ and } V_K(r) =\max_{i\in\cI}\sup_{f \in F: P\cL_f \leq r^2}\sqrt{\mathbb{V}ar_{P_i}(\cL_f)} \sqrt{\frac{K}{N}}\enspace.
\end{gather*} 
There are two important differences between $\bar{r}_2(\gamma)$ on one side and  $r_2(\theta)$ in Definition~\ref{def:fixed_point_gauss} or $\tilde{r}_2(\gamma)$ in \eqref{comp:par} on the other side.
The first one is the extra variance term $V_K(r)$. 
Under the Bernstein condition, this term is negligible in front of the ``expectation term'' $E(r)$ see \cite{MR2240689}. 
In the general setting considered here, the variance term is handled in the complexity parameter. 
The second important consequence is that $\bar{r}_2$ is a fixed point of the complexity of $F$ localized around $f^*$ \emph{with respect to the excess risk} rather than with respect to the $L_2$-norm.
An important consequence is that this quantity is harder to compute in practical examples.
As a consequence, the results of this section are more of theoretical importance.

\begin{Theorem}\label{theo:main_without_bernstein_cond}
    Grant Assumptions~\ref{assum:lip_conv},~\ref{assum:convex},~\ref{assum:moments2} and assume that $|\cO|\leq 3N/7$. Let $\gamma=1/(768L)$ and  $K \in \big[  7|\cO|/3 ,  N \big]$. The minmax MOM estimator $\hat{f}$ defined in \eqref{def:MinmaxMOM} satisfies, with probability at least $1- \exp(-K/2016)$, $P\cL_{\hat f}\leq \bar{r}_2^2(\gamma).$
\end{Theorem}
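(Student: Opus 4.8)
The plan is to exploit the minmax structure together with the convexity of the excess loss, reducing the statement to a single ``majority of blocks'' estimate. Write $r=\bar{r}_2(\gamma)$ and $T_K(f)=\sup_{g\in F}\MOM{K}{\ell_f-\ell_g}$, so that $\hat f$ minimises $T_K$ and in particular $T_K(\hat f)\le T_K(f^*)$. Since $\MOM{K}{-h}=-\MOM{K}{h}$, one has $T_K(f^*)=-\inf_{g\in F}\MOM{K}{\cL_g}$ and $T_K(f)\ge \MOM{K}{\cL_f}$ for every $f$. Hence it suffices to produce a threshold $\rho>0$ such that, on a single event of probability at least $1-\exp(-K/2016)$, (i) $\MOM{K}{\cL_g}\ge-\rho$ for all $g\in F$, and (ii) $\MOM{K}{\cL_f}>\rho$ for every $f$ with $P\cL_f>r^2$: then $T_K(f)>\rho\ge-\inf_g\MOM{K}{\cL_g}=T_K(f^*)\ge T_K(\hat f)$ forces $P\cL_{\hat f}\le r^2$. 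Because each map $f\mapsto\cL_f(X_i,Y_i)$ is convex with value $0$ at $f^*$ (Assumption~\ref{assum:lip_conv}) and $F$ is convex (Assumption~\ref{assum:convex}), for any $f$ with $P\cL_f>r^2$ the segment point $f_\alpha=(1-\alpha)f^*+\alpha f$, with $\alpha\in(0,1)$ chosen so that $P\cL_{f_\alpha}=r^2$, satisfies $\cL_{f_\alpha}\le\alpha\cL_f$ pointwise; averaging over a block and using monotonicity of the median gives $\MOM{K}{\cL_{f_\alpha}}\le\alpha\MOM{K}{\cL_f}$, so $\MOM{K}{\cL_f}\ge\MOM{K}{\cL_{f_\alpha}}$ whenever the right-hand side is nonnegative. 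It is therefore enough to establish (i) and (ii) on the excess-risk sphere $\{f:P\cL_f=r^2\}$.

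The core is the following block-counting estimate, stated uniformly over the localised set $F_r=\{f\in F:P\cL_f\le r^2\}$: on an event $\Omega$ of probability at least $1-\exp(-K/2016)$, every $f\in F_r$ has strictly more than $K/2$ blocks $B_k$ with $|(P_{B_k}-P)\cL_f|\le\rho$, where $\rho$ is a fixed fraction of $r^2$ (its value being pinned down by $\gamma=1/(768L)$ and the factor $\sqrt{1536}$ in the definition of $\bar{r}_2$). Granting this, I recover (i) and (ii) as follows. For $g\in F_r$, $P\cL_g\ge0$ gives $P_{B_k}\cL_g\ge P\cL_g-\rho\ge-\rho$ on a majority of blocks, so $\MOM{K}{\cL_g}\ge-\rho$; for $g$ with $P\cL_g>r^2$ the convexity reduction of the previous paragraph (applied at the sphere point, where $\MOM{K}{\cL_{g_\alpha}}\ge r^2-\rho>0$) yields $\MOM{K}{\cL_g}>0\ge-\rho$; this proves (i). Likewise, for $f$ on the sphere a majority of blocks satisfy $P_{B_k}\cL_f\ge r^2-\rho$, whence $\MOM{K}{\cL_f}\ge r^2-\rho>\rho$ as soon as $\rho<r^2/2$, and the convexity reduction extends this to all $f$ with $P\cL_f>r^2$; this proves (ii).

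Proving the block-counting estimate is the main obstacle, and I would organise it as follows. First discard the at most $|\cO|$ blocks meeting $\cO$; since $K\ge7|\cO|/3$ these number at most $3K/7$, leaving a clean majority of inlier-only blocks, so it suffices to control these. For a fixed $f\in F_r$ and a clean block, Chebyshev's inequality bounds $\bP[|(P_{B_k}-P)\cL_f|>\rho]$ by $\mathbb{V}ar(P_{B_k}\cL_f)/\rho^2\le(K/N)\max_{i}\mathbb{V}ar_{P_i}(\cL_f)/\rho^2\lesssim V_K(r)^2/\rho^2$, and the constraint $\sqrt{1536}\,V_K(r)\le r^2$ built into $\bar{r}_2(\gamma)$ makes this block-failure probability a small absolute constant, so the expected number of bad clean blocks is a small fraction of $K$. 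To upgrade this pointwise bound to a uniform one over $F_r$, I would pass to the count $\phi(f)=\sum_k\dsone\{|(P_{B_k}-P)\cL_f|>\rho\}$ (after a smooth truncation so that $\phi$ has bounded differences), control $\sup_{f\in F_r}(\phi(f)-\E\phi(f))$ by McDiarmid's inequality across the $K$ blocks — this is exactly the step producing the $\exp(-K/2016)$ probability — and then bound $\E\sup_{f\in F_r}\phi(f)$ by symmetrisation followed by the Lipschitz contraction principle, which replaces $\cL_f$ by $(f-f^*)(X_i)$ at the cost of the factor $L$ and brings in the multiplier term $E(r)$. The constraint $E(r)/\gamma\le r^2$ with $\gamma=1/(768L)$ keeps this supremum below the slack needed to preserve a strict majority of good blocks. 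Combining the outlier count, the variance-driven block-failure probability, and the complexity-driven uniform deviation yields $\Omega$, and the reduction of the first paragraph then gives $P\cL_{\hat f}\le r^2=\bar{r}_2^2(\gamma)$.

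The delicate point throughout — and the genuine difference with the Bernstein-condition analysis of Theorem~\ref{theo:main} — is that here the variance term $V_K(r)$ is not absorbed by the expectation term $E(r)$ but must be carried through the block-level Chebyshev bound independently. Keeping both terms below the common threshold $r^2$ is exactly what the two-sided maximum in the definition of $\bar{r}_2(\gamma)$ encodes, and balancing them against the McDiarmid slack is where the specific constants $768$, $1536$ and $2016$ are fixed.
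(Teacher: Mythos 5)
Your proposal is correct and follows essentially the same route as the paper's proof: the same excess-risk-localised majority-of-blocks event (with threshold a fixed fraction, $(1/4)\bar{r}_2^2(\gamma)$, of $r^2$), the same convexity/homogeneity reduction to the excess-risk sphere, and the same stochastic control combining removal of corrupted blocks, a per-block Chebyshev bound driven by $V_K$, McDiarmid's inequality, and symmetrisation plus Lipschitz contraction driven by $E(r)$, yielding the $1-\exp(-K/2016)$ probability. The only step you assert rather than prove is the existence of $\alpha\in(0,1)$ with $P\cL_{f_\alpha}=r^2$; the paper devotes a separate lemma to this (convexity and relative $L_1(\mu)$-closedness of the level sets $\{f\in F: P\cL_f\leq r\}$), and in your formulation it follows from continuity of $t\mapsto P\cL_{f^*+t(f-f^*)}$ (Lipschitz in $t$ since $|\cL_g-\cL_h|\leq L|g-h|$) together with the intermediate value theorem, so this is a minor omission rather than a gap.
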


Recall that Assumptions~\ref{assum:lip_conv} and \ref{assum:convex} are only meaning that the loss function is convex and Lipschitz and that the class $F$ is convex. 
Assumption~\ref{assum:moments2} says that inliers are independent and define the same excess risk as $(X,Y)$ over $F$. 
In particular, Theorem~\ref{theo:main_without_bernstein_cond} holds, as Theorem~\ref{theo:main}, without assumptions on the outliers $(X_i,Y_i)_{i\in\cO}$ and with weak assumptions on the outputs $(Y_i)_{i\in\cI}$ of the inliers (we remrak that excess loss function  $f\to P \cL_f$ is well-defined under no assumption on $Y$ -- even if $Y\notin L_1$ -- because $|\cL_f|\leq L|f-f^*|$). Moreover, the excess risk bound holds with exponentially large probability without assuming sub-Gaussian design, a small ball hypothesis or a Bernstein condition. 
%To our knowledge this is the first result is obtained and we believe that t
This generality can be achieved by combining MOM estimators with convex-Lipschitz loss functions. 

The following result discuss relationships between Theorems~\ref{theo:main} and~\ref{theo:main_without_bernstein_cond}.
Introduce the following modification of the Bernstein condition. 
\begin{Assumption}\label{ass:bernstein_kappa} Let $\gamma = 1/(768L)$. There exists a constant $A > 0$ such that for all $f\in F$ if $P\cL_f = C^\prime_{K,r}$ then $\|f-f^{*}\|_{L_2 }^2 \leqslant AP\mathcal{L}_f $ where, for $\tilde{r}_{2}(\gamma)$ defined in~\eqref{comp:par},
	\begin{equation*}
	C^\prime_{K,r} =  \max\left( \frac{\tilde{r}_{2}^{2}(\gamma/A)}{A},  \frac{1536AL^2K}{N} \right)\enspace.
	\end{equation*}
\end{Assumption}
Assumption~\ref{ass:bernstein_kappa} is slightly stronger than Assumption~\ref{assum:fast_rates} since the $L_2$-metric to define the sphere is replaced by the excess risk metric. If Assumption~\ref{ass:bernstein_kappa} holds then Theorem~\ref{theo:bernstein_kappa} implies the same statistical bounds for \eqref{def:MinmaxMOM} as Theorem~\ref{theo:main} up to constants, as shown by the following result.

%The following result is therefore a corollary of Theorem~\ref{theo:main_without_bernstein_cond}.
%It recovers the excess risk bound of Theorem~\ref{theo:main} up to constants and the modification of the Bernstein condition given in Assumption~\ref{ass:bernstein_kappa}. 
%

%If Assumption~\ref{ass:bernstein_kappa} holds then for all $r\geq0$, $\{f\in F:P\cL_f\leq r^2\}\subset \{f\in F:\norm{f-f^*}_{L_2}\leq (A r^2)^{1/(2\kappa)}\}$ and so  $E(\tilde{r}_2(\gamma/A))/\gamma \leq \big(\tilde{r}_{2\kappa}^\kappa(\gamma/A)\big)^2$. Moreover, using the Lipschitz property of the loss function together with Assumption~\ref{ass:bernstein_kappa}, for all $r\geq0$, $\sqrt{1536}V_K(r) \leq \sqrt{1536K/N}(Ar^2)^{1/(2\kappa)}$ and so $\sqrt{1536}V_K(r) \leq r^2$ for all $r$ such that $r\geq [\sqrt{1536 K/N} A^{1/(2\kappa)}]^{\kappa/(2\kappa-1)}$. As a consequence, $\bar{r}_2^2(\gamma)\leq C^{(\kappa)}_{K,r}$ and the next result follows from Theorem~\ref{theo:main_without_bernstein_cond}. In particular, for $\kappa=1$, we recover Theorem~\ref{theo:main}.

\begin{Theorem}\label{theo:bernstein_kappa}
Grant Assumptions~\ref{assum:lip_conv},~\ref{assum:convex},~\ref{assum:moments2} and assume that $|\cO|\leq 3N/7$. Assume that the local Bernstein condition Assumption~\ref{ass:bernstein_kappa} holds. Let $\gamma=1/(768L)$ and  $K \in \big[  7|\cO|/3 ,  N \big]$. The minmax MOM estimator $\hat{f}$ defined in \eqref{def:MinmaxMOM} satisfies, with probability at least $1- \exp(-K/2016)$, 
\begin{equation*}
\norm{\hat f-f^*}_{L_2}^2\leq \max\left(\tilde{r}_2^2(\gamma/A), \frac{1536L^2A^2K}{N}\right) \mbox{ and }P\cL_{\hat f}\leq  \max\left(\frac{\tilde{r}_2^2(\gamma/A)}{A}, \frac{1536L^2AK}{N}\right).
\end{equation*}
\end{Theorem}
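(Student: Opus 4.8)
The plan is to derive the result from Theorem~\ref{theo:main_without_bernstein_cond}, using the local Bernstein condition (Assumption~\ref{ass:bernstein_kappa}) to convert the excess-risk control it provides into the explicit rate $C^\prime_{K,r}$ and then into an $L_2$ bound. Since Assumptions~\ref{assum:lip_conv}, \ref{assum:convex} and~\ref{assum:moments2} are in force with $\gamma=1/(768L)$, $K\in[7|\cO|/3,N]$ and $|\cO|\le 3N/7$, Theorem~\ref{theo:main_without_bernstein_cond} already yields, on an event $\Omega$ of probability at least $1-\exp(-K/2016)$, the bound $P\cL_{\hat f}\le \bar{r}_2^2(\gamma)$. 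It therefore suffices to establish the deterministic comparison $\bar r_2^2(\gamma)\le C^\prime_{K,r}$ and then to transfer this excess-risk bound to the $L_2$-norm; the announced probability is inherited verbatim.

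First I would show that $r_*:=\sqrt{C^\prime_{K,r}}$ satisfies the inequality defining $\bar r_2(\gamma)$ in \eqref{eq:comp_par_local_excess_loss}, which forces $\bar r_2(\gamma)\le r_*$ by minimality. The mechanism is that, by convexity of $F$ (Assumption~\ref{assum:convex}), the class $F-f^*$ is star-shaped around $0$, so Assumption~\ref{ass:bernstein_kappa} upgrades from the excess-risk sphere $\{P\cL_f=C^\prime_{K,r}\}$ to the excess-risk ball, yielding the inclusion $\{f\in F:P\cL_f\le r_*^2\}\subset\{f\in F:\norm{f-f^*}_{L_2}^2\le A r_*^2\}$. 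For the expectation term $E(r_*)$, this inclusion bounds it by the $L_2$-localized Rademacher complexity at radius $\sqrt{A}\,r_*$; since $A r_*^2\ge \tilde r_2^2(\gamma/A)$, the standard sub-homogeneity of Rademacher complexities over star-shaped classes (namely $r\mapsto r^{-2}\,\E\sup_{\norm{f-f^*}_{L_2}\le r}|\sum_{i\in J}\sigma_i(f-f^*)(X_i)|$ is non-increasing) lets me invoke the defining property of $\tilde r_2(\gamma/A)$ from \eqref{comp:par} at radius $\sqrt A\,r_*\ge\tilde r_2(\gamma/A)$, giving $E(r_*)\le (\gamma/A)\cdot A r_*^2=\gamma r_*^2$, i.e. $E(r_*)/\gamma\le r_*^2$. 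For the variance term, the Lipschitz property (Assumption~\ref{assum:lip_conv}) bounds $\mathbb{V}ar_{P_i}(\cL_f)\le P_i\cL_f^2\le L^2 P_i(f-f^*)^2$, which on the Bernstein region is at most $AL^2 r_*^2$; hence $\sqrt{1536}\,V_K(r_*)\le\sqrt{1536}\,L\sqrt{A}\,r_*\sqrt{K/N}\le r_*^2$, the last inequality being exactly $r_*^2=C^\prime_{K,r}\ge 1536AL^2K/N$. Both constraints in \eqref{eq:comp_par_local_excess_loss} thus hold at $r_*$, so $\bar r_2^2(\gamma)\le C^\prime_{K,r}$ and on $\Omega$ we obtain $P\cL_{\hat f}\le C^\prime_{K,r}=\max(\tilde r_2^2(\gamma/A)/A,\,1536AL^2K/N)$, which is the announced excess-risk bound.

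It then remains to pass to the $L_2$ bound $\norm{\hat f-f^*}_{L_2}^2\le A\,C^\prime_{K,r}$. On $\Omega$ we have $P\cL_{\hat f}\le C^\prime_{K,r}$, so the same star-shapedness argument that produced the inclusion above applies to $\hat f$ itself: moving along the segment $[f^*,\hat f]\subset F$ and combining the convexity of $f\mapsto P\cL_f$ with Assumption~\ref{ass:bernstein_kappa} on the excess-risk sphere confines $\hat f$ to the $L_2$-ball of squared radius $A\,C^\prime_{K,r}=\max(\tilde r_2^2(\gamma/A),1536L^2A^2K/N)$, exactly as in the confinement step of the proof of Theorem~\ref{theo:main}. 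This delivers both announced bounds on the event of probability at least $1-\exp(-K/2016)$.

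The main obstacle is the deterministic complexity comparison of the second paragraph, because $\bar r_2(\gamma)$ and $\tilde r_2(\gamma/A)$ localize the class in two \emph{different} metrics --- the excess risk $P\cL_f$ and the $L_2$-norm, respectively --- and the former ball a priori contains functions of arbitrarily large $L_2$-norm. Reconciling the two localizations, while simultaneously absorbing the variance term $V_K$ that is absent from \eqref{comp:par}, is precisely what the local Bernstein condition buys us; the only structural input making this rigorous is the convexity of $F$, which both provides star-shapedness (hence the sphere-to-ball upgrade of Assumption~\ref{ass:bernstein_kappa}) and the sub-homogeneity of the Rademacher complexity needed to evaluate it beyond the fixed-point radius.
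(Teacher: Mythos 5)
Your proof is correct and follows essentially the same route as the paper's: both reduce the theorem to the deterministic comparison $\bar r_2^2(\gamma)\leq C^\prime_{K,r}$ --- via the Lipschitz bound $\mathbb{V}ar_{P_i}(\cL_f)\le L^2\norm{f-f^*}^2_{L_2(\mu_i)}$ on the variance term, the star-shapedness/monotonicity of the localized complexities, and the Bernstein conversion of the excess-risk localization into an $L_2$ one --- and then conclude from Theorem~\ref{theo:main_without_bernstein_cond} together with Assumption~\ref{ass:bernstein_kappa} applied to $\hat f$ (your only cosmetic deviation is checking both fixed-point constraints at the single radius $\sqrt{C^\prime_{K,r}}$ rather than, as the paper does, at two separate radii combined by monotonicity of $E(r)/r^2$ and $V_K^\prime(r)/r^2$). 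The one delicate point --- propagating the sphere condition of Assumption~\ref{ass:bernstein_kappa} to the whole excess-risk ball --- is used silently by the paper and at least named explicitly (your ``sphere-to-ball upgrade'') in your argument, so your write-up is, if anything, more detailed than the paper's four-line proof.
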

\begin{proof}
 First, $V_K(r)\leq LV_K^\prime(r)$ for all $r>0$ where $V_K^\prime(r) =\sqrt{K/N} \max_{i\in\cI}\sup_{f \in F: P\cL_f \leq r^2}\norm{f-f^*}_{L_2(\mu_i)}$. Moreover, $r\to E(r)/r^2$ and $r\to V_K^\prime(r)/r^2$ are non-increasing, therefore by
% Using Assumption~\ref{assum:lip} and the fact that $\|f-f^{*}\|_{L_2 }^2 \leqslant C_{K,r}$ we get that 
%  \begin{equation*}
% 	P\cL_f \leq  \max\left( \frac{\tilde{r}_{2}^{2}(\gamma/A)}{A},  \frac{1536AL^2K}{N} \right)
%  \end{equation*}
Assumption~\ref{ass:bernstein_kappa} and the definition of $\tilde{r}_2(\gamma)$, $V^\prime_K(r)$,
 \begin{equation*}
 	\frac{1}{\gamma} E \bigg( \frac{\tilde{r}_{2}(\gamma/A)}{\sqrt{A}} \bigg) \leq \frac{\tilde{r}_{2}^{2}(\gamma/A)}{A} \quad \mbox{and} \quad \sqrt{1536} V_K^\prime\bigg( \sqrt{1536} L\sqrt{\frac{AK}{N}} \bigg) \leq \frac{1536 A^2LK}{N}\enspace.
\end{equation*}
Hence, $\bar{r}_2^2(\gamma) \leq \max(\tilde{r}_2^2(\gamma/A)/A, 1536L^2A(K/N))$. 
\end{proof}

% section learning_without_assumption (end)

\section{Bernstein's assumption }\label{app:ass}

This section shows that the local Bernstein condition holds for various loss functions and design $X$. In Assumption~\ref{assum:fast_rates} and~\ref{assum:fast_rates_MOM}, the comparizon between $P\cL_f$ and $\|f-f^*\|_{L_2}^2$ is only required on a $L_2$-sphere. In this section, we prove that the local Bernstein assumption can be verified over the entire $L_2$-ball and not only on the sphere under mild moment conditions. 
%All along the section, the oracle $f^*$ and the Bayes rules which minimizes the risk $f\to R(f)$ over all measurable functions from $\cX$ to $\cY$ are assumed to be equal. 
%In that case, the Bernstein's condition \cite{MR2240689} coincides with the margin assumption \cite{MR2051002,MR1765618}. 
The class $F-\{f^*\}$ is assumed to satisfy a `` $L_{2+\varepsilon}/L_2$-norm equivalence assumption'', for $\varepsilon >0 $.
%Let $r^2 \in \{ r_2^2, C_{K,r} \}$, to verify Assumption~\ref{assum:fast_rates}, choose $r^2 = r_2^2$ while for Assumption~\ref{assum:fast_rates_MOM}, choose $r^2 = C_{K,r}$. 
\begin{Assumption} \label{ass:L4L2} Let $\varepsilon >0$.
	There exists $C'>0$ such that for all $ f\in F$, $\|f-f^{*}\|_{L_{2+\varepsilon}} \leq C'\|f-f^{*}\|_{L_2}$ 
\end{Assumption}
Assumption~\ref{ass:L4L2} is a ``$L_{2+\varepsilon}/L_2$'' norm equivalence assumption over $F-\{f^*\}$. A ``$L_{4}/L_2$'' norm equivalence assumption over $F-\{f^*\}$ has been used for the study of MOM estimators (see \cite{LugosiMendelson2016}).  Examples of distributions satisfying Assumption~\ref{ass:L4L2}  can be found in \cite{mendelson2014learning,MR3367000}. 

%Assumption~\ref{ass:L4L2} is used to prove the local Bernstein condition. Moreover, 
There are situations where the constant $C'$ depends on the dimension $d$ of the model. 
In that case, the results in \cite{LugosiMendelson2016,lecue2017robust} provide sub-optimal statistical upper bounds.  
For instance, if $X$ is uniformly distributed on $[0,1]$ and  $F=\{\sum_{j=1}^d \alpha_j I_{A_j}:(\alpha_j)_{j=1}^p\in\R^d\}$ where $I_{A_j}$ is the indicator of $A_j=[(j-1)/d,j/d]$ then  for all $f\in F$, $\norm{f-f^*}_{L_{2+\varepsilon}}\leq d^{\varepsilon/(4+2\varepsilon)} \norm{f-f^*}_{L_2}$ so $C' = d^{\varepsilon/(4+2\varepsilon)} $. This dependence with respect to the dimension $d$ is inevitable. For instance, in  \cite{LugosiMendelson2016,lecue2017robust}, a $L_4/L_2$ norm equivalence is required. In this case, $C' = d^{1/4}$ which ultimately yields sub-optimal rates in this example. 
On the other hand, as will become clear in this section, the rates given in Theorem~\ref{theo:main} or Theorem~\ref{theo:MOM_lepski} are not deteriorated in this example. 
This improvement is possible since the Bernstein condition is only required in a neighborhood of $f^*$.

\subsection{Quantile loss}

The proof is based on \cite[Lemma 2.2]{elsener2016robust} and is postponed to Section~\ref{proof:thm4}. 
Recall that $\ell_f(x,y) = (y-f(x))(\tau - I \{  y-f(x) \leq 0 \})$.
%To proceed with this example, we shall use the following assumptions.

\begin{Assumption} \label{ass:quantil} 
Let $C'$ be the constant defined in Assumption \ref{ass:L4L2}. There exist $\alpha>0$ and $r>0$ such that, for all $x\in\cX$ and for all $z$ in $\mathbb{R}$ such that $ |z-f^{*}(x) | \leq r (\sqrt{2}C')^{(2+\varepsilon)/\varepsilon} $, we have $ f_{Y|X=x}(z) \geq \alpha$, where $f_{Y|X=x}$ is the conditional density function of $Y$ given $X=x$.
\end{Assumption}

\begin{Theorem} \label{quantile_loss}
	Grant Assumptions \ref{ass:L4L2} (with constant $C^\prime$) and \ref{ass:quantil} (with parameter $r$ and $\alpha$). Then, for all $f\in F$ satisfying $\|f-f^*\|_{L_2}\leqslant r$, $\|f-f^*\|_{L_2}^2\leqslant (4/\alpha) P\mathcal{L}_f$.	
\end{Theorem}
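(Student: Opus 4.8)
The plan is to reduce the claim to a pointwise lower bound on the conditional excess risk and then to control a truncation error via the norm-equivalence Assumption~\ref{ass:L4L2}. Write $h=f-f^*$, so that $\norm{h}_{L_2}\leq r$, and set $\rho = r(\sqrt 2\, C')^{(2+\varepsilon)/\varepsilon}$, the radius appearing in Assumption~\ref{ass:quantil}. First I would apply Knight's identity to the quantile loss: with $\psi_\tau(u)=\tau-I\{u\leq 0\}$ denoting the (a.e.) derivative of $\rho_\tau$,
\[
\mathcal{L}_f(X,Y) = -h(X)\psi_\tau(Y-f^*(X)) + \int_0^{h(X)}\bigl(I\{Y-f^*(X)\leq s\}-I\{Y-f^*(X)\leq 0\}\bigr)\,\rmd s.
\]
Taking the conditional expectation given $X$ and writing $F_{Y|X}$ for the conditional c.d.f.\ of $Y$ given $X$, this becomes
\[
P\mathcal{L}_f = \E\bigl[-h(X)(\tau - F_{Y|X}(f^*(X)))\bigr] + \E\Bigl[\int_0^{h(X)}\bigl(F_{Y|X}(f^*(X)+s)-F_{Y|X}(f^*(X))\bigr)\,\rmd s\Bigr].
\]

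The first (linear) term is exactly the directional derivative $DR(f^*)[h]$ of the risk at $f^*$ in the direction $h=f-f^*$. Since $F$ is convex (Assumption~\ref{assum:convex}) and $f^*$ minimises $R$ over $F$, the convex map $t\mapsto R(f^*+th)$ on $[0,1]$ has nonnegative right derivative at $t=0$, so this linear term is $\geq 0$ and may be discarded. For the integral remainder I would invoke the density lower bound of Assumption~\ref{ass:quantil}: for any $x$ and any $s$ with $|s|\leq\rho$, monotonicity and the bound $f_{Y|X=x}\geq\alpha$ give that $F_{Y|X=x}(f^*(x)+s)-F_{Y|X=x}(f^*(x))$ has the same sign as $s$ with absolute value at least $\alpha|s|$. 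Hence, on the event $\{|h(X)|\leq\rho\}$ the inner integral is at least $\tfrac{\alpha}{2}h(X)^2$, while on $\{|h(X)|>\rho\}$ the integrand keeps the sign of $s$ and the integral stays nonnegative, so that
\[
P\mathcal{L}_f \geq \frac{\alpha}{2}\,\E\bigl[h(X)^2\,I\{|h(X)|\leq\rho\}\bigr].
\]

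The crux, and the only genuinely delicate point, is to show that the truncated second moment is comparable to the full one, namely $\E[h^2 I\{|h|\leq\rho\}]\geq\tfrac12\norm{h}_{L_2}^2$; this is where Assumption~\ref{ass:L4L2} enters. I would bound the complementary part by Hölder's inequality with conjugate exponents $(2+\varepsilon)/2$ and $(2+\varepsilon)/\varepsilon$, yielding $\E[h^2 I\{|h|>\rho\}]\leq \norm{h}_{L_{2+\varepsilon}}^2\,\bP[|h|>\rho]^{\varepsilon/(2+\varepsilon)}$, and then use the norm equivalence $\norm{h}_{L_{2+\varepsilon}}\leq C'\norm{h}_{L_2}$ together with Chebyshev's inequality $\bP[|h|>\rho]\leq \norm{h}_{L_2}^2/\rho^2 \leq r^2/\rho^2$. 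The choice $\rho=r(\sqrt 2\, C')^{(2+\varepsilon)/\varepsilon}$ forces $r^2/\rho^2=(\sqrt 2\, C')^{-2(2+\varepsilon)/\varepsilon}$, hence $\bP[|h|>\rho]^{\varepsilon/(2+\varepsilon)}\leq (\sqrt 2\, C')^{-2}=1/(2(C')^2)$, so the complementary term is at most $\tfrac12\norm{h}_{L_2}^2$. Subtracting gives $\E[h^2 I\{|h|\leq\rho\}]\geq\tfrac12\norm{h}_{L_2}^2$, and combining with the previous display yields $P\mathcal{L}_f\geq\tfrac{\alpha}{4}\norm{f-f^*}_{L_2}^2$, which is the claim. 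The main obstacle is precisely the calibration of the truncation radius $\rho$ against the norm-equivalence constant $C'$ so that the surviving mass is at least a half; everything else is routine once Knight's identity and first-order optimality are in place.
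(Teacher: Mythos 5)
Your proof is correct, and it shares the paper's overall skeleton: kill the first-order term using convexity of $F$ and optimality of $f^*$ (this is exactly the paper's Lemma~\ref{lemm:ber} combined with dominated convergence), extract a curvature term bounded below by $\tfrac{\alpha}{2}(f-f^*)^2$ on the event $\{|f-f^*|\leq \rho\}$, and then run the identical H\"older--Markov--norm-equivalence computation showing that the truncated second moment retains half of $\norm{f-f^*}_{L_2}^2$. Where you genuinely differ is the device producing the curvature term. The paper writes the conditional risk as $g(x,f(x))$ via Fubini and performs a second-order Taylor expansion of $a\mapsto g(x,a)$ with Lagrange remainder $\tfrac12 f_{Y|X=x}(z_x)\,(f(x)-f^*(x))^2$ at an intermediate point $z_x$, whereas you use Knight's identity, an exact representation whose remainder is $\int_0^{h(x)}\big(F_{Y|X=x}(f^*(x)+s)-F_{Y|X=x}(f^*(x))\big)\,\rmd s$. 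Your version buys two small but real advantages: (i) on the bad event $\{|h|>\rho\}$ your remainder is nonnegative by monotonicity of the conditional c.d.f.\ alone, whereas the paper discards the $A^c$ contribution by invoking $f_{Y|X=x}(z_x)\geq 0$ at a point $z_x$ that may lie outside the window where Assumption~\ref{ass:quantil} guarantees a density exists --- the paper thus implicitly assumes a conditional density on all of $\bR$, while you need only the c.d.f.\ plus the local density bound; (ii) the exact identity dispenses with twice-differentiability of $g(x,\cdot)$ and the mean-value form of the remainder. What the paper's route buys in exchange is uniformity: the same Taylor-expansion template is reused essentially verbatim for the Huber and logistic losses (Sections~\ref{proof:thm5} and~\ref{proof:thm7}), whereas Knight's identity is specific to the quantile loss.
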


Consider the example from Section~\ref{sec:example_basic}, assume that $K\lesssim {\rm Rank}(\Sigma)$ and let $r^2 = C_{K,r} = \tilde{r}_2^2(\gamma) \leq {\rm Rank}(\Sigma) / (2\gamma^2 N)$. 
If $C'=d^{\varepsilon /(4+2\varepsilon)}$, Assumption~\ref{ass:quantil} holds for $r$ and an associated $\alpha\gtrsim 1$ as long as $d^{1/2}\sqrt{{\rm Rank}(\Sigma) / N}\lesssim 1$ and, for all $x\in\cX$ and for all $z$ in $\mathbb{R}$ such that $ |z-f^{*}(x) | \lesssim 1 $, $ f_{Y|X=x}(z) \gtrsim 1$.  As ${\rm Rank}(\Sigma)\leqslant d$, the first condition reduces to $N\gtrsim d^2$. In this situation, the rates given in Theorems~\ref{theo:main} and~\ref{theo:MOM_lepski} are still ${\rm Rank}(\Sigma)/N$. This gives a partial answer, in our setting, to the issue raised in \cite{saumard2018optimality} regarding results based on the small ball method.

%\textcolor{red}{C'est très partiel, d'une part c'est pas le même probleme et en plus, pour la perte $L^2$, c'est pas dur de montrer que minmax MOM atteint la vitesse optimale sur cet exemple quand $N\geqslant p^2$ avec un argument de contraction...}

% $L_4/L_2$ Assumption on models with localized basis (which can be recasted as a linear problem with uniformly distributed design on the canonical basis). 
%

%where we made the following Assumption:
%
%To conclude we need the following lemma:
%\begin{lemma} \label{lemma :cond}
%Let $A$ be an event that holds with probability larger than $1-\delta$. Then $\mathbb{E} [  I_A(X) \frac{(f(X)-f^{*}(X))^2}{2}  ] \geq (1-\sqrt{\delta}) \|f-f^{*}\|_{L_4}^2  $ .
%\end{lemma} 
%\begin{proof}
% It is sufficient to write:
% \begin{center}
%   $\|f-f^{*}\|_{L_2}^2 =  \mathbb{E} [I_A(X) (f(X)-f^{*}(X))^2] +  \mathbb{E} [I_A^c(X) (f(X)-f^{*}(X))^2] $.
% \end{center}
% Then we have to upper bound $\mathbb{E} [I_A^c(X) (f(X)-f^{*}(X))^2] $. We get: 
% \begin{align*}
% \mathbb{E} [I_A^c(X) (f(X)-f^{*}(X))^2] \leq \|f-f^{*}\|_{L_4}^2 P(X \in A^c)^{1/2} \leq \sqrt{\delta} \|f-f^{*}\|_{L_4}^2
% \end{align*}
%\end{proof}  
%To conclude it is necessary to make the following Assumption:
%
%
%Then under Assumption  \ref{ass:quantil} and \ref{ass:sb1} we have:
%\begin{align*}
%P\mathcal{L}_f \geq \frac{1- c_3 \delta}{C}  \|f-f^{*}\|_{L_2}^2
%\end{align*}
%

\subsection{Huber Loss}
Consider the Huber loss function defined, for all $f\in F$, $x\in\cX$ and $y\in\R$, by $\ell_f(x,y) = \rho_H(y-f(x))$ where $\rho_H(t) = t^2/2$ if $|t| \leq \delta$ and $\rho_H(t) = \delta |t|- \delta^2/2$ otherwise. 
Introduce the following assumption.
\begin{Assumption} \label{ass:huber}
		Let $C'$ be the constant defined in Assumption \ref{ass:L4L2}. 
		There exist $\alpha>0$ and $r>0$ such that for all $x\in\cX$ and all $z$ in $\mathbb{R}$ such that $ |z-f^{*}(x) | \leq (\sqrt{2}C')^{(2+\varepsilon)/\varepsilon}r$, $F_{Y|X=x}(z+\delta) - F_{Y|X=x}(z- \delta)\geqslant \alpha$, where $F_{Y|X=x}$ is the conditional cumulative function of $Y$ given $X=x$.
\end{Assumption}
Under this assumption and a ``$L_{2+\eps}/L_2$'' assumption, the local Bernstein condition is proved to be satisfied in the following result whose proof is postponed to Section~\ref{proof:thm5}. 
\begin{Theorem} \label{huber_loss}
	Grant Assumptions \ref{ass:L4L2} (with constant $C^\prime$) and \ref{ass:huber} (with parameter $r$ and $\alpha$). Then, for all $f\in F$ satisfying $\|f-f^*\|_{L_2}\leqslant r$, there exists $\alpha>0$ (given by Assumption~\ref{ass:huber}) such that $\|f-f^*\|_{L_2}^2\leqslant (4/\alpha) P\mathcal{L}_f$.
\end{Theorem}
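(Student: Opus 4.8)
The plan is to prove the equivalent quantitative Bernstein inequality $P\cL_f \geq (\alpha/4)\|f-f^*\|_{L_2}^2$, which after rearranging is exactly the claim. Writing $h=f-f^*$, I would first exploit convexity of $\rho_H$ along the segment joining $f^*$ to $f$. Set $\varphi(t)=P\ell_{f^*+th}$ for $t\in[0,1]$; this is convex, and since $F$ is convex (Assumption~\ref{assum:convex}) and $f^*$ minimizes the risk over $F$, the point $f^*+th$ stays in $F$ and $\varphi$ attains its minimum on $[0,1]$ at $t=0$, so $\varphi'(0^+)\geq 0$. Because $\rho_H'$ is $1$-Lipschitz (hence absolutely continuous), $\varphi$ is $C^{1,1}$ with $\varphi''(t)=\E[h(X)^2\,\mathbbm{1}(|Y-f^*(X)-th(X)|<\delta)]$ for a.e.\ $t$, using $\rho_H''=\mathbbm{1}_{(-\delta,\delta)}$. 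Fubini then gives $P\cL_f=\varphi(1)-\varphi(0)=\int_0^1\varphi'(t)\,dt=\varphi'(0^+)+\int_0^1(1-t)\varphi''(t)\,dt\geq \int_0^1(1-t)\varphi''(t)\,dt$, the last inequality by $\varphi'(0^+)\geq 0$. This is the step that mirrors the quantile-loss argument of Theorem~\ref{quantile_loss} and \cite[Lemma~2.2]{elsener2016robust}.

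The second step is a pointwise lower bound on $\varphi''$ on a region where $h$ is controlled. I would fix the threshold $\beta=(\sqrt{2}C')^{(2+\varepsilon)/\varepsilon}r$, matching exactly the radius in Assumption~\ref{ass:huber}. Conditioning on $X$ and restricting the expectation defining $\varphi''(t)$ to the event $\{|h(X)|\leq\beta\}$, for every $t\in[0,1]$ the point $z=f^*(X)+th(X)$ satisfies $|z-f^*(X)|=|t h(X)|\leq\beta$, so Assumption~\ref{ass:huber} gives $\bP(|Y-z|<\delta\mid X)=F_{Y\mid X}(z+\delta)-F_{Y\mid X}(z-\delta)\geq\alpha$. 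Integrating $\int_0^1(1-t)\,dt=1/2$ then yields $P\cL_f\geq (\alpha/2)\,\E[h(X)^2\,\mathbbm{1}(|h(X)|\leq\beta)]$.

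The last step is to show that discarding the complementary event costs at most half the $L_2$-mass, i.e.\ $\E[h(X)^2\,\mathbbm{1}(|h(X)|>\beta)]\leq \tfrac12\|h\|_{L_2}^2$. Here I would apply H\"older's inequality with exponents $(2+\varepsilon)/2$ and $(2+\varepsilon)/\varepsilon$ to get $\E[h^2\,\mathbbm{1}(|h|>\beta)]\leq \|h\|_{L_{2+\varepsilon}}^2\,\bP(|h(X)|>\beta)^{\varepsilon/(2+\varepsilon)}$, then bound $\|h\|_{L_{2+\varepsilon}}^2\leq C'^2\|h\|_{L_2}^2$ by the norm-equivalence Assumption~\ref{ass:L4L2} and $\bP(|h(X)|>\beta)\leq \|h\|_{L_2}^2/\beta^2\leq r^2/\beta^2$ by Markov (using $\|h\|_{L_2}\leq r$). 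The value of $\beta$ is chosen precisely so that $C'^2(r^2/\beta^2)^{\varepsilon/(2+\varepsilon)}\leq 1/2$. Combining gives $\E[h^2\,\mathbbm{1}(|h|\leq\beta)]\geq \tfrac12\|h\|_{L_2}^2$, hence $P\cL_f\geq(\alpha/4)\|h\|_{L_2}^2$, which is the assertion.

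The main obstacle is the truncation underlying the last two steps: the lower bound of Assumption~\ref{ass:huber} is only available on the bounded region $\{|h|\leq\beta\}$, while $\|h\|_{L_2}\leq r$ provides no pointwise control, so one must quantify the mass $h$ places at large values. The $L_{2+\varepsilon}/L_2$ equivalence is exactly what converts this tail mass into a fraction of $\|h\|_{L_2}^2$, and the delicate point is the bookkeeping that makes the truncation level $\beta$ coincide with the radius $(\sqrt{2}C')^{(2+\varepsilon)/\varepsilon}r$ appearing in Assumption~\ref{ass:huber}. A secondary technical care is the integral-remainder representation of $\varphi$, which is legitimate because $\rho_H'$ is absolutely continuous with $\rho_H''=\mathbbm{1}_{(-\delta,\delta)}$, so $\varphi''$ exists almost everywhere and the fundamental theorem of calculus applies.
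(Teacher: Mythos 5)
Your proposal is correct and follows essentially the same route as the paper's proof: a second-order expansion of the Huber risk along the segment from $f^*$ to $f$, non-negativity of the first-order term from convexity of $F$ and optimality of $f^*$, a curvature lower bound $F_{Y|X}(z+\delta)-F_{Y|X}(z-\delta)\geq\alpha$ at intermediate points (valid since they stay within the radius of Assumption~\ref{ass:huber} on the truncation set), and the identical H\"older--Markov--norm-equivalence argument with the same threshold $(\sqrt{2}C')^{(2+\varepsilon)/\varepsilon}r$ to absorb the tail, yielding the same constant $4/\alpha$. The only (cosmetic) difference is that you use the integral-remainder Taylor formula in the scalar parameter $t$, whereas the paper applies a Lagrange-remainder expansion pointwise in $x$ to $g(x,\cdot)=\E[\rho_H(Y-\cdot)\,|\,X=x]$, citing \cite[Lemma~2.1]{elsener2016robust} for twice differentiability.
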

% Obviously, since the Bernstein assumption holds over the entire $L_2$-ball, in particular it is verified over the $L_2$-sphere. The interested reader can check that the remark following Theorem~\ref{quantile_loss} applies to this example too.

\subsection{Logistic classification} \label{bern:log}
In this section we consider the logistic loss function. 
%Let us assume that the classification problem is no that simple to solve.

\begin{Assumption}\label{ass:log}
	There exists $c_0 > 0$ such that
	\begin{align*}
	\mathbb{P} \big( |f^*(X)| \leq c_0   \big) \geq 1-\frac{1}{(2C')^{(4+2\varepsilon)/\varepsilon}} .
	\end{align*} 
where $C'$ is defined in Assumption~\ref{ass:L4L2}. 
\end{Assumption} 
The following result is proved in Section~\ref{proof:thm7}.
%The exponential term recalls the margin condition in \cite{tsybakov2004optimal}. 
\begin{Theorem}\label{thm:AppliLog}
	Grant Assumptions \ref{ass:L4L2} and \ref{ass:log}. Then, for all $r>0$  and all $f\in F$ such that $ \|f-f^*\|_{L_2}\leqslant r$,
	\[
	P\mathcal{L}_f\geqslant \frac{e^{ -c_0 - r(2C')^{(2+\epsilon)/\epsilon}   }  }{2\big( 1+ e^{c_0 + r(2C')^{(2+\epsilon)/\epsilon} }  \big)^2 } \|f-f^*\|_{L_2}^2 \enspace.
	\]
\end{Theorem}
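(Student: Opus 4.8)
The plan is to establish a pointwise strong-convexity (curvature) lower bound for the logistic loss in its first argument on a region where $f^*(X)$ and $f(X)$ are both bounded, and then to transfer this local bound to the global $L_2$-norm using the norm-equivalence Assumption~\ref{ass:L4L2} together with Assumption~\ref{ass:log}. First I would work pointwise in $(x,y)$. Writing $\bar\ell(u,y)=\log(1+e^{-yu})$, a direct computation gives $\partial_u^2\bar\ell(u,y)=e^{u}/(1+e^{u})^2$, which is independent of $y\in\{-1,1\}$, and $|\partial_u\bar\ell(u,y)|\le 1$ (the Lipschitz constant $L=1$). The second-order Taylor/Bregman identity then yields, for some $\xi_{x,y}$ between $f^*(x)$ and $f(x)$,
\[
\cL_f(x,y)=\partial_u\bar\ell(f^*(x),y)\,(f(x)-f^*(x))+\tfrac12\,\partial_u^2\bar\ell(\xi_{x,y},y)\,(f(x)-f^*(x))^2 .
\]
Taking expectations, the linear term equals the right derivative at $0$ of $t\mapsto R(f^*+t(f-f^*))$, which is nonnegative because $f^*$ minimises $R$ over the convex class $F$ (Assumption~\ref{assum:convex}); the boundedness of $\partial_u\bar\ell$ justifies differentiating under the expectation. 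Since $\partial_u^2\bar\ell\ge0$ everywhere, I may discard the linear term and the quadratic contribution off any convenient event.

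Next I set $M:=c_0+r(2C')^{(2+\epsilon)/\epsilon}$ and introduce the event $\cE=\{|f^*(X)|\le c_0\}\cap\{|f(X)-f^*(X)|\le r(2C')^{(2+\epsilon)/\epsilon}\}$. On $\cE$ the entire segment between $f^*(X)$ and $f(X)$ lies in $[-M,M]$, where lower bounding the numerator by $e^{-M}$ and upper bounding $(1+e^{u})^2$ by $(1+e^{M})^2$ gives $\partial_u^2\bar\ell(\xi_{X,Y},Y)\ge e^{-M}/(1+e^{M})^2$. Combined with the identity above this produces
\[
P\cL_f\ \ge\ \frac{1}{2}\,\frac{e^{-M}}{(1+e^{M})^2}\,\E\big[(f-f^*)^2\1_{\cE}\big].
\]

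It then remains to show that $\cE$ carries a fixed fraction of the $L_2$-mass of $f-f^*$. I would bound $\E[(f-f^*)^2\1_{\cE^c}]$ by Hölder's inequality with conjugate exponents $(2+\epsilon)/2$ and $(2+\epsilon)/\epsilon$, giving $\|f-f^*\|_{L_{2+\epsilon}}^2\,\mathbb{P}(\cE^c)^{\epsilon/(2+\epsilon)}$, and replace the $L_{2+\epsilon}$-norm by the $L_2$-norm via Assumption~\ref{ass:L4L2} at the cost of a factor $C'^2$. The probability $\mathbb{P}(\cE^c)$ splits into $\mathbb{P}(|f^*(X)|>c_0)$, controlled by Assumption~\ref{ass:log}, and $\mathbb{P}(|f-f^*|>r(2C')^{(2+\epsilon)/\epsilon})$, controlled by Markov's inequality at order $2+\epsilon$ together with Assumption~\ref{ass:L4L2} and $\|f-f^*\|_{L_2}\le r$. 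The exponents are calibrated so that $\mathbb{P}(\cE^c)^{\epsilon/(2+\epsilon)}\lesssim (2C')^{-2}$, which cancels the factor $C'^2$ and yields $\E[(f-f^*)^2\1_{\cE^c}]\le\tfrac12\|f-f^*\|_{L_2}^2$, hence $\E[(f-f^*)^2\1_\cE]\ge\tfrac12\|f-f^*\|_{L_2}^2$. Substituting into the previous display gives the stated inequality (up to the numerical constant, obtained by tracking the factors above).

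The main obstacle is precisely this last transfer step: the curvature bound is available only on the bounded region $\cE$, whereas the conclusion must feature the global $L_2$-norm. The delicate point is that the radius $r(2C')^{(2+\epsilon)/\epsilon}$ and the tail level in Assumption~\ref{ass:log} are tuned so that the Hölder and Markov estimates exactly absorb the constant $C'$ coming from the norm equivalence. This is what keeps the bound meaningful even when $C'$ grows with the dimension, as in the histogram example following Assumption~\ref{ass:L4L2}, which is exactly the regime where the small-ball method breaks down.
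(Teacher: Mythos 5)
Your proof is correct and follows essentially the same route as the paper's: a second-order Taylor expansion of the logistic risk (the paper conditions on $X$ first and expands $g(x,a)=\E[\bar\ell(a,Y)\,|\,X=x]$, you expand pointwise in $(x,y)$, which is equivalent since the second derivative $e^u/(1+e^u)^2$ does not depend on $y$), nonnegativity of the linear term via first-order optimality of $f^*$ over the convex class, a curvature lower bound on the event $\{|f^*(X)|\le c_0\}\cap\{|f(X)-f^*(X)|\le r(2C')^{(2+\epsilon)/\epsilon}\}$, and the same H\"older--Markov transfer calibrated to absorb $C'$ so that this event retains half of the $L_2$-mass. Your tracked constant $\tfrac12\cdot\tfrac12=\tfrac14$ is exactly what the paper's own proof yields as well (the factor $1/2$ displayed in the theorem statement is slightly more generous than what its proof delivers), so your closing caveat about the numerical constant is apt.
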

The proof is postponed to Section~\ref{proof:thm7}. As for the Huber Loss and the Hinge Loss, the rates of convergence are not deteriorated when $C'$ may depend on the dimension as long as $r \times (C')^{(2+\varepsilon)/\varepsilon}$ is smaller than some absolute constant. 

\subsection{Hinge loss}

In this section, we show that the local Bernstein condition holds for various design $X$ for the Hinge loss function. We obtain the result under the assumption that the oracle $f^*$ is actually the Bayes rules which is the function minimizing the risk $f \mapsto R(f)$ over all measurable functions from $\cX$ to $\bR$. 
Recall that, under this assumption, $f^*(x) = \mbox{sign}(2\eta(x)-1)$ where $\eta(X) = \bP (Y=1|X)$. 
In that case, the Bernstein condition (see \cite{MR2240689}) coincides with the margin assumption (see \cite{MR2051002,MR1765618}).
\begin{Assumption}\label{ass:log}
	Let $C'$ be the constant defined in Assumption \ref{ass:L4L2}. There exist $\alpha > 0$ and $0<r \leq (\sqrt{2}C')^{-(2+\varepsilon)/\varepsilon}$ such that for all $x \in \cX$, for all $z \in \bR$, $|z-f^*(x)| \leq (\sqrt{2}C')^{(2+\varepsilon)/\varepsilon}r$
	\begin{align*}
		\min \big( \eta(x), 1- \eta(x) , |1-2\eta(x)|  \big) \geq \alpha \enspace.
	\end{align*} 
\end{Assumption} 
Assumption~\ref{ass:log} is also local and has the same flavor as Assumptions~\ref{ass:quantil} and~\ref{ass:huber}.

\begin{Theorem} \label{thm:hinge}
	Grant Assumptions \ref{ass:L4L2} (with constant $C^\prime$) and \ref{ass:log} (with parameter $r$ and $\alpha$). Assume that the oracle $f^*$ is the Bayes estimator i.e. $f^*(x) = \mbox{sign}(2\eta(x)-1)$ for all $x\in\cX$.  Then, for all $f\in F$ such that $\|f-f^*\|_{L_2}\leqslant r$, $\|f-f^*\|_{L_2}^2\leqslant \frac{2}\alpha P\mathcal{L}_f$.	
\end{Theorem}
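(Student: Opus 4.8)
The plan is to reduce the statement to a pointwise (conditional) analysis of the excess loss, establish a \emph{linear} lower bound on the conditional excess risk on a neighbourhood of $f^*$, and then upgrade it to the desired \emph{quadratic} bound by controlling the region where $f(x)$ is far from $f^*(x)$ via the norm equivalence of Assumption~\ref{ass:L4L2}. First I would fix $x$, write $\eta(x)=\bP(Y=1\,|\,X=x)$, and note that the conditional risk of the value $u=f(x)$ is $\psi_x(u)=\eta(x)(1-u)_+ +(1-\eta(x))(1+u)_+$, a convex piecewise-linear function minimized exactly at $u=f^*(x)=\mathrm{sign}(2\eta(x)-1)\in\{-1,1\}$; in particular $\psi_x(u)\ge\psi_x(f^*(x))$ for all $u$, i.e. the conditional excess loss is nonnegative. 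A routine case analysis on the position of $u$ relative to $\pm1$ then yields, for every $u$,
\[
\psi_x(u)-\psi_x(f^*(x))\ \ge\ \min\bigl(\eta(x),\,1-\eta(x),\,|1-2\eta(x)|\bigr)\,|u-f^*(x)|\enspace.
\]

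Next I would set $R=(\sqrt2\,C')^{(2+\varepsilon)/\varepsilon}r$ and split $\cX$ into a ``good set'' $G=\{x:|f(x)-f^*(x)|\le R\}$ and a ``bad set'' $B=\{x:|f(x)-f^*(x)|>R\}$. On $G$ the point $z=f(x)$ satisfies $|z-f^*(x)|\le R$, so Assumption~\ref{ass:log} gives $\min(\eta(x),1-\eta(x),|1-2\eta(x)|)\ge\alpha$ and hence $\psi_x(f(x))-\psi_x(f^*(x))\ge\alpha|f(x)-f^*(x)|$. Because the constraint $r\le(\sqrt2\,C')^{-(2+\varepsilon)/\varepsilon}$ forces $R\le1$, on $G$ one has $|f(x)-f^*(x)|\le1$, so $|f(x)-f^*(x)|\ge|f(x)-f^*(x)|^2$ and therefore $\psi_x(f(x))-\psi_x(f^*(x))\ge\alpha|f(x)-f^*(x)|^2$. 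Integrating and discarding $B$ using nonnegativity of the conditional excess loss,
\[
P\cL_f=\E\bigl[\psi_X(f(X))-\psi_X(f^*(X))\bigr]\ \ge\ \alpha\,\E\bigl[(f-f^*)^2\,\1_{G}\bigr]\enspace.
\]

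It then remains to show that $B$ carries at most half of the $L_2$ mass, namely $\E[(f-f^*)^2\1_B]\le\tfrac12\norm{f-f^*}_{L_2}^2$, which forces $\E[(f-f^*)^2\1_G]\ge\tfrac12\norm{f-f^*}_{L_2}^2$ and, combined with the previous display, gives exactly $\norm{f-f^*}_{L_2}^2\le(2/\alpha)\,P\cL_f$. For this I would first bound the tail by Markov's inequality, $\bP(B)\le\norm{f-f^*}_{L_2}^2/R^2\le r^2/R^2=(2C'^2)^{-(2+\varepsilon)/\varepsilon}$, and then apply Hölder with conjugate exponents $(2+\varepsilon)/2$ and $(2+\varepsilon)/\varepsilon$ together with Assumption~\ref{ass:L4L2}:
\[
\E[(f-f^*)^2\1_B]\ \le\ \norm{f-f^*}_{L_{2+\varepsilon}}^2\,\bP(B)^{\varepsilon/(2+\varepsilon)}\ \le\ C'^2\norm{f-f^*}_{L_2}^2\cdot(2C'^2)^{-1}\ =\ \tfrac12\norm{f-f^*}_{L_2}^2\enspace.
\]

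The conditional computation of $\psi_x$ and its margin lower bound is elementary piecewise-linear bookkeeping. The genuine obstacle—and the only delicate calibration—is the last estimate: the radius $R$ must be tuned so that the tail probability of $B$, raised to the Hölder power $\varepsilon/(2+\varepsilon)$, \emph{exactly} cancels the factor $C'^2$ coming from the norm equivalence and leaves a constant strictly below $1$ (here $1/2$). This is precisely what lets the local Bernstein inequality survive even when $C'$ depends on the dimension, and it is the point at which the specific exponent $(2+\varepsilon)/\varepsilon$ appearing in Assumption~\ref{ass:log} is forced.
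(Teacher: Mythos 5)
Your proof is correct and follows essentially the same route as the paper's: a pointwise conditional case analysis giving the margin lower bound $\min(\eta,1-\eta,|1-2\eta|)$, the same good/bad set split at radius $(\sqrt2\,C')^{(2+\varepsilon)/\varepsilon}r$, and the identical Markov--H\"older--norm-equivalence calibration that caps the bad-set contribution at half the $L_2$ mass. The only cosmetic differences are that you first prove a linear lower bound and then upgrade it to quadratic using $R\le 1$ (the paper folds this into the case analysis directly), and that you make explicit the nonnegativity of the conditional excess loss on the bad set, which the paper uses implicitly.
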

The proof is postponed to Section~\ref{proof:thm8}. 

\section{Comparison between ERM and minmax MOM} % (fold)
\label{sec:comparison_between_erm_and_minmax_mom}
In this section, we show that robustness properties with respect to heavy-tailed data and to outliers of the minmax MOM estimator in Theorem~\ref{theo:main} cannot be achieved by the ERM. 
We prove two lower bounds on the statistical risk of ERM. 
First, we show that ERM is not robust to contamination in the design $X$ and second that ERM cannot achieve the optimal rate with a sub-Gaussian deviation under only moment assumptions. 

We first show the absence of robustness of ERM w.r.t. contamination by even a single input variable. 
We consider the absolute loss function of linear functionals $\ell_t(x,y)=|y-\inr{x,t}|$.
Let $X_1,\ldots,X_N$ denote i.i.d. Gaussian vectors, and suppose that there exists $t^*$ such that $Y_i=\inr{X_i, t^*}, i=1, \ldots, N$.
Assume that a vector $v\in\bR^d$ was added to $X_1$ (and that this is the only corrupted data). 
Hence, we are given the dataset $(X_1+v, Y_1), (X_2, Y_2), \cdots, (X_N, Y_N)$. 
Consider the ERM constructed on this dataset i.e $\hat t^{ERM} \in \argmin_{t\in\bR^d}P_N\ell_t$ where $P_N\ell_t = (1/N)|Y_1-\inr{X_1+v, t}| + (1/N)\sum_{i=2}^N |Y_i-\inr{X_i, t}|$. In this context, the following lower bound holds.

\begin{Proposition}\label{prop:lower_bound_ERM_contamination}
There exist absolute constants $c_0$ and $c_2$ such that the following holds. If the contamination vector $v$ satisfies $|\inr{v,t^*}|\geq (1/2)\norm{v}_2\norm{t^*}_2$, with $\norm{v}_2\geq c_2N$, then with probability at least $1-4\exp(-c_0 N)$, $\norm{\hat t^{ERM} - t^*}_2\geq \norm{t^*}_2/4$.
\end{Proposition}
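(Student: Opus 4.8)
The plan is to argue by contradiction, exploiting that the single corrupted pair $(X_1+v,Y_1)$ forces a residual of order $\norm{v}_2\norm{t^*}_2$ on \emph{every} $t$ close to $t^*$, whereas a harmless competitor such as $t=0$ keeps the empirical risk of order $\norm{t^*}_2$. Throughout I would write $P_N\ell_t=(1/N)|Y_1-\inr{X_1+v,t}|+(1/N)\sum_{i=2}^N|Y_i-\inr{X_i,t}|$ and use $Y_i=\inr{X_i,t^*}$ for every $i$, so that the corrupted residual equals $|\inr{X_1,t^*-t}-\inr{v,t}|$ while the clean residuals equal $|\inr{X_i,t^*-t}|$. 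Taking $X_i\sim\mathcal N(0,I_d)$, note $\inr{X_i,t^*}$ is centered Gaussian with standard deviation $\norm{t^*}_2$.

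First I would upper bound the empirical risk at the competitor $t=0$, where the corruption is invisible: $P_N\ell_0=(1/N)\sum_{i=1}^N|\inr{X_i,t^*}|$ is a normalized sum of i.i.d.\ copies of $\norm{t^*}_2|g|$ with $g\sim\mathcal N(0,1)$, hence has expectation $\sqrt{2/\pi}\,\norm{t^*}_2$. Since $|g|$ is sub-Gaussian, a standard concentration inequality yields $P_N\ell_0\le 2\norm{t^*}_2$ on an event $E_1$ of probability at least $1-2\exp(-c_1N)$.

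Next I would produce a lower bound valid \emph{uniformly} over the ball $B=\{t:\norm{t-t^*}_2\le\norm{t^*}_2/4\}$, so that the data dependence of $\hat{t}^{ERM}$ causes no trouble. For $t\in B$, the alignment hypothesis $|\inr{v,t^*}|\ge(1/2)\norm{v}_2\norm{t^*}_2$ together with Cauchy--Schwarz gives $|\inr{v,t}|\ge|\inr{v,t^*}|-\norm{v}_2\norm{t-t^*}_2\ge(1/4)\norm{v}_2\norm{t^*}_2$, while $|\inr{X_1,t^*-t}|\le\norm{X_1}_2\norm{t^*}_2/4$. On an event $E_2$ of probability at least $1-\exp(-c_3N)$ one has $\norm{X_1}_2\le\sqrt d+\sqrt{2c_3N}$ by Gaussian concentration of the $1$-Lipschitz Euclidean norm, so, dropping the nonnegative clean residuals and using $\norm{v}_2\ge c_2N$,
\[
P_N\ell_t\ge\frac1N\big(|\inr{v,t}|-|\inr{X_1,t^*-t}|\big)\ge\Big(\frac{c_2}{4}-\frac{\sqrt d+\sqrt{2c_3N}}{4N}\Big)\norm{t^*}_2\qquad\text{for all }t\in B.
\]

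Finally, on $E_1\cap E_2$ I would combine the two estimates. As $\hat{t}^{ERM}$ minimizes $P_N\ell_\cdot$, we have $P_N\ell_{\hat{t}^{ERM}}\le P_N\ell_0\le 2\norm{t^*}_2$. If $\hat{t}^{ERM}\in B$, the displayed uniform lower bound then forces $c_2/4-o(1)\le 2$, which fails as soon as $c_2$ is a large enough absolute constant (and $d\lesssim N^2$, so that $\sqrt d/N$ stays bounded); this contradiction gives $\norm{\hat{t}^{ERM}-t^*}_2>\norm{t^*}_2/4$ on $E_1\cap E_2$, an event of probability at least $1-4\exp(-c_0N)$ with $c_0=\min(c_1,c_3)$. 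The only delicate point is the cross term $\inr{X_1,t^*-\hat{t}^{ERM}}$, where $\hat{t}^{ERM}$ is correlated with $X_1$; this is precisely why the lower bound is made uniform over $B$ and combined with a high-probability control of $\norm{X_1}_2$, so that the deterministic gain $\norm{v}_2\gtrsim N$ from the corruption dominates this lower-order fluctuation.
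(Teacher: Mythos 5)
Your proposal is correct and follows essentially the same route as the paper's proof: compare the empirical risk of the competitor $t=0$ (where the corruption is invisible) against a uniform lower bound over the ball $B_2(t^*,\norm{t^*}_2/4)$, obtained by isolating the corrupted residual $|\inr{v,t}|\geq \norm{v}_2\norm{t^*}_2/4$, controlling the cross term via Gaussian concentration of $\norm{X_1}_2$ (the paper invokes Borell--TIS, which is the same inequality), and letting $\norm{v}_2\geq c_2N$ dominate. The only cosmetic difference is that you make the implicit dimension restriction ($\sqrt{d}/N$ bounded) explicit, which the paper leaves hidden in its constants.
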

When $N \asymp d$, from Theorem~\ref{theo:main} with $K \asymp d$, minmax MOM estimators yields, with probability at least $1-2\exp(-cd)$, $\norm{\hat t^{MOM} - t^*}_2\lesssim d/N$ on the same dataset as the one used by ERM in Proposition~\ref{prop:lower_bound_ERM_contamination}. If $\|t^*\| \gtrsim 1$ then the ERM is suboptimal compared with the minmax MOM estimator.

\beginproof
To show that $\hat t^{ERM}$ is outside $\cB=B_2(t^*, (1/4)\norm{t^*}_2)= \{t\in\bR^d: \norm{t-t^*}_2\leq (1/4)\norm{t^*}_2\}$, it is enough to show that $P_N\ell_0$ is smaller than the smallest value of $t\to P_N \ell_t$ over $\cB$. 
It follows from Gaussian concentration that, with probability at least $1-\exp(-c_0N)$,
\begin{equation}\label{eq:risk_zero}
P_N\ell_0 = \frac{1}{N}\sum_{i=1}^N |\inr{X_i,t^*}|\leq \frac{3}{2}\sqrt{\frac{2}{\pi}}\norm{t^*}_2.
\end{equation}

Let us now bound from bellow the empirical loss function $t\to P_N\ell_t$ uniformly over all $t$ in $\cB$. 
First,
\begin{equation}\label{eq:inr_v_t}
|\inr{v,t}|\geq |\inr{v,t^*}|-|\inr{v,t-t^*}|\geq \norm{v}_2(\norm{t^*}_2/2-\norm{t-t^*}_2)\geq \norm{v}_2\norm{t^*}_2/4.
\end{equation}Then, it follows from Borell-TIS inequality (see Theorem~7.1 in \cite{MR1849347} or pages 56-57 in \cite{ledoux2013probability}) that with probability at least $1-2\exp(-c_1N), \norm{X_1}_2\leq \bE \norm{X_1}_2 + \sqrt{2c_1N}\leq c_3\sqrt{N+d}$. Therefore, $\bP(\Omega_1)\geq 1-2\exp(-c_1N)$, where
\[
\Omega_1=\{\forall t\in\bR^d,\quad |\inr{X_1, t-t^*}|\leq c_3\sqrt{N+d}\norm{t-t^*}_2\}\enspace.
\]
On $\Omega_1$, we have for all $t\in \cB$ that  $|\inr{X_1,t^*-t}|\leq c_3\sqrt{N+d}\norm{t-t^*}_2\leq (c_3/4)\sqrt{N+d}\norm{t^*}_2$.
Therefore, using \eqref{eq:inr_v_t} and $\norm{v}_2\geq c_2N$ for a large enough constant $c_2$,
\begin{align}\label{eq:risk_ball_t_star}
\notag P_N \ell_t  & = \frac{1}{N}|\inr{X_1,t^*-t} - \inr{v,t}| + \frac{1}{N}\sum_{i=2}^N |\inr{X_i,t^*-t}|\geq \frac{1}{N}|\inr{v, t}| - \frac{1}{N}|\inr{X_1, t^*-t}|\\
&\geq \frac{1}{4N}\norm{v}_2\norm{t^*}_2 - \frac{c_3\norm{t^*}_2}{4\sqrt{N}}>\frac{3}{2}\sqrt{\frac{2}{\pi}}\norm{t^*}_2\enspace.
\end{align}
\endproof

It follows from Proposition~\ref{prop:lower_bound_ERM_contamination} that ERM is not consistant when there is even a single outlier among the $X_i$. 
By comparison, the minmax MOM has optimal performance even when the dataset has been corrupted by up to $d$ outliers when $N\gtrsim d$.
This shows a first advantage of the minmax MOM approach. 

Now, we prove a second advantage of the minmax MOM over the ERM by considering heavy-tailed design. 
We also consider the absolute $L_1$-loss function as in the previous example and suppose that data are generated from a linear model in dimension $d=1$: $Y=Xt^*+\zeta$ where $X$ and $\zeta$ are independent mean zero random variables and $t^*\in\bR$ (we choose $d=1$ so that we have access to a canonical definition of median which simplifies the proof). Our aim is to show that if the design  $X$ has only a second moment then the ERM $\hat t^{ERM}$ cannot achieve the optimal rate $\sqrt{x/N}$ with a sub-Gaussian deviation that is $1-\exp(-c_0x)$ as does the minmax MOM for all $x\in[1,N]$.   

\begin{Proposition}\label{prop:lower_bound_ERM_moment} Let $N\geq 8000$ and $10\leq x\leq N/800$. There exist $X$ and $\zeta$ two symmetric and independent random variables such that $\E X^2\in[1, 16]$, $\bE \zeta^2\leq 5 x^2$ and, for any $t^*\in\bR$ and $Y=Xt^*+\zeta$, we have $\{t^*\} = \argmin_{t\in\bR}\bE|Y-Xt|$. 
Let $(X_i,Y_i)_{i=1}^N$ be $N$ i.i.d. copies of $(X,Y)$ such that $Y=Xt^*+\zeta$ for some $t^*\in\bR$. 
Let $\hat t^{ERM}\in\argmin_{t\in\bR}\sum_{i=1}^N|Y_i-X_it|$. 
Then, with probability at least $3/(5x)$, 
\[
\sqrt{\bE[(X(\hat t^{ERM} - t^*))^2]}\geq (1/5)\sqrt{x/N}\enspace.
\]  
\end{Proposition}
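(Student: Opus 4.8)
The plan is to exhibit an explicit heavy-tailed design for which the one-dimensional absolute-loss ERM is, on a polynomially-large event, controlled by a single informative but noisy observation, so that the noise is transmitted undamped into the error. Concretely, I would take $X$ symmetric and extremely sparse, with $\bP(X=0)=1-p$ and $\bP(X=\pm\sqrt{xN})=p/2$ where $p=1/(xN)$; then $\bE X^2=p\,xN=1\in[1,16]$. For the noise I would take $\zeta$ symmetric, independent of $X$, equal to $\pm x$ with probability $3/4$ and uniform on $[-1,1]$ with probability $1/4$, so that $\bE\zeta^2=(3/4)x^2+1/12\le 5x^2$ and $\zeta$ has a positive density in a neighbourhood of $0$.

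Before using this pair I would verify that $t^*$ is the unique oracle. Writing $u=t-t^*$ and $Y=Xt^*+\zeta$, the risk is $\bE|Y-Xt|=\bE|\zeta-Xu|=(1-p)\bE|\zeta|+p\,g(\sqrt{xN}\,u)$, where $g(w)=\bE|\zeta-w|$ and the two nonzero atoms of $X$ merge by symmetry of $\zeta$. Since $g$ is convex and its set of minimisers is the set of medians of $\zeta$, which reduces to the single point $0$ because $\zeta$ has positive density near $0$, the risk is uniquely minimised at $u=0$, i.e. $\{t^*\}=\argmin_{t}\bE|Y-Xt|$.

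The crux is the reduction of the $1$-dimensional absolute-loss ERM to a weighted median: the indices with $X_i=0$ contribute the constant $|\zeta_i|$ to $\sum_i|Y_i-X_it|=\sum_i|\zeta_i-X_iu|$ and drop out, while the remaining terms are minimised at the weighted median of $\{\zeta_i/X_i\}$ with weights $\{|X_i|\}$. I would then work on the event $E$ that exactly one index $i_0$ has $X_{i_0}\ne0$ and $\zeta_{i_0}=\pm x$. On $E$ the empirical loss is a constant plus $|X_{i_0}|\,|\zeta_{i_0}/X_{i_0}-u|$, uniquely minimised at $u=\zeta_{i_0}/X_{i_0}$, so that $\hat t^{ERM}-t^*=\zeta_{i_0}/X_{i_0}$ and $|\hat t^{ERM}-t^*|=x/\sqrt{xN}=\sqrt{x/N}$; since $\bE X^2=1$ this gives $\sqrt{\bE[(X(\hat t^{ERM}-t^*))^2]}=\sqrt{x/N}\ge(1/5)\sqrt{x/N}$. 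To finish I would lower bound $\bP(E)$: by independence of the active set from the noise, $\bP(E)=Np(1-p)^{N-1}\cdot\bP(\zeta=\pm x)$, and with $Np=1/x$ and $(1-p)^{N-1}\ge 1-Np\ge 9/10$ (using $x\ge10$) this is at least $(1/x)(9/10)(3/4)=27/(40x)\ge 3/(5x)$.

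The main obstacle is conceptual rather than computational: one must identify the correct pathology, namely that an extremely sparse heavy-tailed design forces the ERM to rely, on an event of probability $\asymp 1/x$, on a single informative sample whose noise is of order $x$ and is reproduced in the error after rescaling by $1/\sqrt{xN}$---precisely the mechanism a block-median (MOM) estimator destroys. Once the weighted-median reduction and the event $E$ are in place, the remainder is routine bookkeeping to meet the prescribed constants within the ranges $N\ge 8000$ and $10\le x\le N/800$; the construction above leaves comfortable slack in the moment bounds, in the deviation $(1/5)\sqrt{x/N}$, and in the probability $3/(5x)$.
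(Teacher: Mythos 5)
Your proof is correct, and it establishes every clause of the proposition (symmetry, independence, $\bE X^2=1\in[1,16]$, $\bE\zeta^2=(3/4)x^2+1/12\le 5x^2$, oracle uniqueness via the unique median of $\zeta$ at $0$, and $\bP(E)\ge 27/(40x)\ge 3/(5x)$), but it takes a genuinely different route from the paper's. The paper keeps the design bounded away from zero: $X=\eps(1+R\eta)$ with $\eps$ Rademacher, $\eta$ Bernoulli of parameter $1/(xN)$ and $R=4\sqrt{xN}$, so $|X|\ge 1$ almost surely, and the noise is uniform on $[-x-1/2+\delta^\prime,-x]\cup[-\delta^\prime,\delta^\prime]\cup[x,x+1/2-\delta^\prime]$; the ERM is then the \emph{weighted} median of the ratios $\zeta_i^\prime/|X_i|$ with weights $|X_i|$, and the lower bound requires two Hoeffding bounds (controlling $|I_{\delta^\prime}|$ and $\big||I_{\le -x}|-|I_{\ge x}|\big|$) plus the event that the unique heavy index falls outside $I_{\delta^\prime}$, so that the single weight $1+R$ tips the weighted median into one of the far sets. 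You instead make the design sparse and exactly zero off an event of probability $1/(xN)$, so that on your event $E$ the empirical risk has a single non-constant term and the ERM literally interpolates the one informative observation, $\hat t^{ERM}-t^*=\zeta_{i_0}/X_{i_0}=\pm\sqrt{x/N}$; the whole stochastic analysis collapses to one binomial computation. What this simplicity costs is the purpose the proposition serves in the paper: with probability $(1-1/(xN))^N\ge e^{-1/x}\ge 0.9$ your dataset contains no nonzero $X_i$ at all, hence carries no information whatsoever about $t^*$, so on your example \emph{no} estimator --- in particular not the minmax MOM estimator, whose criterion is identically zero on that event --- can achieve the rate $\sqrt{x/N}$ uniformly over $t^*$ with the exponential probability $1-\exp(-x/2016)$. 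The paper's more laborious construction is engineered precisely so that every observation stays informative, which is what lets the discussion immediately after the proposition verify the local Bernstein condition and compute $\tilde r_2(\gamma)$, concluding that minmax MOM succeeds with exponential probability on the very same data where ERM fails polynomially often. So: as a proof of the proposition as stated, yours is valid and markedly shorter; as the example driving the ERM-versus-MOM separation, it would not do the job.
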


\beginproof
Let $\delta^\prime = (1/8)\sqrt{x/(2N)}$ and let $\zeta$ be uniformly distributed over $[-x-1/2+\delta^\prime, -x]\cup[-\delta^\prime, \delta^\prime]\cup[x, x+1/2-\delta^\prime]$. 
Let $\eps$ denote a Rademacher variable, let $\eta$ be a Bernoulli variable with parameter $\delta=1/(xN)$ and $R=4/\sqrt{\delta}=4\sqrt{xN}$. 
We assume that $\zeta, \eps$ and $\eta$ are independent and let $X=\eps(1+R\eta)$.  
Let $t^*\in\bR$ and let $\cD_N=(X_i,Y_i)_{i=1}^N$ be a dataset of $N$ i.i.d. copies of $(X,Y)$, where $Y=Xt^*+\zeta$. 

Since the median of $\zeta$ is $0$, for all $u\in\bR, \bE|u-\zeta|\geq \bE |\zeta|$ with equality iff $u=0$. 
As a consequence, for all $t\in\bR, \bE|Y-Xt| = \bE_X\bE_\zeta|\zeta - X(t^*-t)|\geq \bE|\zeta|$ and the only minimizer of $t\in\bR\to\bE|Y-Xt|$ is $t^*$. 
In other words, $t^*$ is the oracle. 
For all $t\in\bR$, 
\[
\E [(X(t-t^*))^2]=\bE [X^2] (t-t^*)^2 = (1+2R\delta + R^2\delta)(t-t^*)^2\enspace.
\]
Since $R^2\delta=16$ and $2R\delta\leq \sqrt{8}/100\leq 1$, we have $(t-t^*)^2\leq \E (X(t-t^*))^2\leq 18(t-t^*)^2$, that is, the $L^2(\mu)$-norm is equivalent to the absolute value.

Observe that  $\hat t^{ERM} -t^*$ is solution of the minimization problem
\begin{equation*}
(\hat t^{ERM} -t^*)\in\argmin_{u\in\bR} \sum_{i=1}^N |X_i| \left|\frac{\zeta_i}{X_i} - u\right|=\argmin_{u\in\bR}\E[|W-u||\cD_N]\enspace.
\end{equation*} 
Here, defining $\zeta_i^\prime = \epsilon_i\zeta_i, i\in[N]$, $W$ is a random variable such that
\[
\bP\bigg[W = \frac{\zeta_i^\prime}{|X_i|}|\cD_N\bigg]=\frac{|X_i|}{ \sum_{i=1}^N |X_i|}\enspace.
\]
Notice that, almost surely, all $\zeta_i^\prime/|X_i|$ are different. 
In particular, $|\hat t^{ERM} -t^*|$ is the absolute value of the empirical median $|\med{W}|$.
Therefore, $|\hat t^{ERM} -t^*|\geq c_1\sqrt{x/N}$ when the median of $W$ does not belong to $(-c_1\sqrt{x/N}, c_1\sqrt{x/N})$.
This holds when $\bP[W\leq -c_1\sqrt{x/N}|\cD_N]>1/2$ or $\bP[W\geq c_1\sqrt{x/N}|\cD_N]>1/2$. 
Introduce the following sets
\begin{equation*}
I_{\leq -x}:=\left\{i\in[N]: \zeta_i^\prime\leq -x\right\}, I_{\delta^\prime}:=\left\{i\in[N]: |\zeta_i^\prime|\leq \delta^\prime\right\} \mbox{ and } I_{\geq x}:=\left\{i\in[N]: \zeta_i^\prime\geq x\right\} \enspace.
\end{equation*}
Define also the following events
\begin{gather*}
\Omega_0:=\left\{|I_{\delta^\prime}|\leq \sqrt{2xN} ,\qquad \big||I_{\leq -x}| - |I_{\geq x}|\big|\leq \sqrt{2xN}\right\} \enspace,\\  
\Omega_1:=\left\{\forall i\in I_\delta: \eta_i=0 \mbox{ and } |\{i\in[N]:\eta_i=1\}|=1\right\}\enspace. 
\end{gather*}
By Hoeffding's  inequality (see Chapter~2 in \cite{MR3185193}), as $(\zeta_i^\prime)_{i=1}^N$ is a family of i.i.d. random variables distributed like $\zeta_1$, with probability at least $1-\exp(-x/4)$,
\begin{equation*}
|I_{\delta^\prime}| = \sum_{i=1}^N I(|\zeta_i^\prime|\leq \delta^\prime)\leq N \bP[|\zeta_1^\prime|\leq \delta^\prime] + \sqrt{\frac{xN}{2}}=2\delta^\prime N + \sqrt{\frac{xN}{2}}\leq \sqrt{2xN}\enspace.
\end{equation*}
Since $\bP[\zeta_1^\prime\leq -x] = \bP[\zeta_1^\prime\geq x]$, $I(\zeta_i^\prime\leq -x) - I(\zeta_i^\prime\geq x)$ are independent, centered random variables taking values in $[-1,1]$. 
By Hoeffding's inequality, with probability at least $1-2\exp(-x/2)$,
\begin{equation*}
\big||I_{\leq -x}| - |I_{\geq x}|\big|=\left|\sum_{i=1}^N I(\zeta_i^\prime\leq -x) - I(\zeta_i^\prime\geq x)\right|\leq \sqrt{2xN}.
\end{equation*}
Using a union bound, we have $\bP[\Omega_0]\geq 1-2\exp(-x/2)-\exp(-x/4)\geq 1-1/(10x)$ when $x\geq10$. 
Since the $\zeta^\prime_i$'s and the $\eta_i$'s are independent, on the event $\Omega_0$, we have
\begin{equation*}
\bP[\forall i\in I_{\delta'}: \eta_i=0|\cD_N] = (1-\delta)^{|I_{\delta^\prime}|}\geq (1-\delta)^{\sqrt{2xN}}\geq 1-2\delta\sqrt{2xN} = 1-\frac{2\sqrt{2}}{\sqrt{xN}}\geq 1-\frac{1}{10x}\enspace.
\end{equation*}
The last inequality holds since $x\leq N/800$. 
Moreover, 
\begin{equation*}
\bP[|\{i\in[N]:\eta_i=1\}|=1] = N\delta (1-\delta)^{N-1}\geq \frac{1-2/x}{x}\enspace.
\end{equation*}
When $x\geq10$, this implies
\begin{equation*}
\bP[|\{i\in[N]:\eta_i=1\}|=1]\geq \frac{4}{5x}\enspace.
\]
Therefore, on the event $\Omega_0$,  $\bP[\Omega_1|\cD_N]\geq 7/(10x)$ and so 
\[
\bP[\Omega_0\cap \Omega_1]=\E[{\bf 1}_{\Omega_0}\E[{\bf 1}_{\Omega_1}|\cD_N]\geq \frac{4}{5x}\bigg(1-\frac1{10x}\bigg)\geq 3/(5x)\enspace.
\] 

We want to show that $|\med{W}|\geq c_1\sqrt{x/N}$ on the event $\Omega_0\cap \Omega_1$, for some well-chosen constant $c_1>0$. 
We have $\bP[W\leq -c_1\sqrt{x/N}|\cD_N]>1/2$ if and only if
\begin{equation*}
   \sum_{i=1}^N I\left(\frac{\zeta_i^\prime}{|X_i|}\leq -c_1\sqrt{x/N}\right)|X_i|\geq \frac{1}{2}\sum_{i=1}^N |X_i|\enspace.
   \end{equation*} 
%   and a similar result holds for $\bP_W[W\geq c_1\sqrt{x/N}]>1/2$. 
   In particular, if
   \begin{equation}\label{eq:median_sets_I_x}
   \sum_{i\in I_{\leq -x}} |X_i|\geq \frac{1}{2}\sum_{i=1}^N |X_i| 
  \mbox{ or } \sum_{i\in I_{\geq x}} |X_i|\geq \frac{1}{2}\sum_{i=1}^N |X_i|
   \end{equation}
then the median of $W$ takes value in $\{\zeta_i^\prime/|X_i|:i\in I_{\leq -x}\}$, resp. in  $\{\zeta_i^\prime/|X_i|:i\in I_{\geq x}\}$. 
 Since, for all $i\in I_{\leq -x}, \zeta_i/|X_i|\leq -x/(1+R)<-\delta^\prime$ and for all $i\in I_{\geq x}, \zeta_i/|X_i|\geq x/(1+R)>\delta^\prime$, in these cases, $|\med{W}|\geq x/(1+R) = x/(1+4\sqrt{xN})\geq (1/5)\sqrt{x/N}$.
%  when $x\geq1$ (because for all $i\in I_{\leq -x}\cup I_{\geq x}, |\zeta_i^\prime|\geq x$ and $|X_i|\in\{1, 1+R\}$) and s
Since $|\hat t^{ERM} -t^*| = |\med{W}|$, the proof is finished if \eqref{eq:median_sets_I_x} is proved.

Let us now prove that \eqref{eq:median_sets_I_x} holds on the event $\Omega_0\cap \Omega_1$. 
On this event, only one $\eta_i$ equals to $1$.
Therefore only one $|X_i|$ equals to $1+R$ and all the others equal $1$. 
Moreover, $\eta_i=0$ for all $i\in I_{\delta^\prime}$. 
Therefore, if $i^*\in[N]$  denotes the only index such that $\eta_{i^*}=1$, then either $i^*\in I_{\leq -x}$ or $i^*\in I_{\geq x}$. 
If $i^*\in I_{\leq -x}$, on $\Omega_0\cap \Omega_1$,
\begin{equation*}
\sum_{i\in I_{\leq -x}}|X_i| = |I_{\leq x}| - 1 + (1+R) = |I_{\leq -x}| + 4\sqrt{xN}\geq |I_{\geq x}|-\sqrt{\frac{xN}{2}} + |I_{\delta^\prime}|-\sqrt{2xN} + 4\sqrt{xN}\geq |I_{\geq x}| + |I_{\delta^\prime}|\enspace.
\end{equation*}
Moreover, all the $|X_i|$ equal $1$ when $i\in I_{\geq x}\cup I_{\delta^\prime}$.
Therefore, $|I_{\geq x}| + |I_{\delta^\prime}| = \sum_{i\in I_{\geq x}\cup I_{\delta^\prime}} |X_i|$. 
Overall, $\sum_{i\in I_{\leq -x}}|X_i|\geq \sum_{i\in I_{\geq x}\cup I_{\delta^\prime}} |X_i|$ which is equivalent to $\sum_{i\in I_{\leq -x}} |X_i|\geq (1/2)\sum_{i=1}^N |X_i|$. 
Likewise, if $i^*\in I_{\geq x}$  then $\sum_{i\in I_{\geq x}} |X_i|\geq (1/2)\sum_{i=1}^N |X_i|$. 
Therefore, on the event $\Omega_0\cap \Omega_1$ \eqref{eq:median_sets_I_x} holds.
\endproof

Proposition~\ref{prop:lower_bound_ERM_moment} shows that the distance between the ERM and $t^*$ is larger than $(1/5)\sqrt{x/N}$ with probability at least $3/(5x)$. 
This probability is larger than $1-\exp(-x/2016)$ for large values of $x$, which shows that the ERM does not have sub-Gaussian deviations.
Let us now show that, using the same data and a number of blocks $K\asymp x$ (or using the adaptive estimator \eqref{eq:adapt_esti_K}), the minmax MOM estimators achieve the rate $\sqrt{x/N}$ with probability at least $1-\exp(-x/2016)$. 
This will show a second advantage of minmax MOM estimators compared with ERM for heavy-tailed designs.

%Let us finally show that the minmax MOM is a subgaussian estimator for the example used in Proposition~\ref{prop:lower_bound_ERM_moment}. This result follows from 
To apply Theorem~\ref{theo:main} (or Theorem~\ref{theo:MOM_lepski} for the adaptive estimator), we show that the local Bernstein condition is satisfied for the example of Proposition~\ref{prop:lower_bound_ERM_moment} and compute the complexity parameter $\tilde r_2(\gamma)$. We have
%Recall that $Y=Xt^*+\zeta$ where $\zeta$ is uniformly distributed over $[-x-1/2+\delta^\prime, -x]\cup[-\delta^\prime, \delta^\prime]\cup[x, x+1/2-\delta^\prime]$ where $\delta^\prime = (1/8)\sqrt{x/(2N)}$ and $X=\eps(1+R\eta)$ where $R=4\sqrt{xN}$ and $\eps$ is a Rademacher variable and $\eta$ is a Bernoulli variable with mean $\delta=1/(xN)$. We have $\bE X^2 = 17+8/\sqrt{xN}\leq 25$ therefore for all $r>0$,
\begin{align*}
&\bE \sup_{t\in\bR: \bE[(X(t-t^*))^2]\leq r^2}\left|\sum_{i=1}^N \sigma_i X_i(t-t^*)\right| = \frac{r}{\sqrt{\bE X^2}}\bE\left|\sum_{i=1}^N \sigma_i \eps_i(1+R\eta_i)\right|\\
&\leq \frac{r \sqrt{2}}{17}\left(\bE\left(\sum_{i=1}^N \sigma_i\eps_i\right)^2+R^2\bE\left(\sum_{i=1}^N \sigma_i\eps_i\eta_i\right)^2\right)^{1/2}\leq \frac{r \sqrt{2}}{17}\sqrt{N+R^2N\delta} =  r \sqrt{\frac{2N}{17}}.
\end{align*}
As a consequence, $\tilde r_2(\gamma)=(1/\gamma)\sqrt{2/(17N)}$ satisfies \eqref{comp:par}. 
We now prove Assumption~\ref{assum:fast_rates_MOM} in this particular example. 
Let $K\in[N]$ be such that $K\geq 2(575)^2/(17\times 865)$ so that, for $C_{K,r}$ is defined in \eqref{eq:CKr} with $L=1$ and $A$ defined later,
\begin{equation*}
 C_{K,r} = A^2 \max\left(\frac{2(575)^2}{17N}, \frac{865 K}{N}\right) = \frac{A^2 865K}{N}\enspace,
 \end{equation*} 
 Let $t\in\bR$ be such that $\E[(X(t-t^*))^2] = C_{K,r}$. 
 We have to show that $P\cL_t\geq A \E[(X(t-t^*))^2]$ for some well chosen $A$ and $P\cL_t = \bE[|Y-Xt|-|Y-Xt^*|]$. 
 It follows from \eqref{eq:Risk1} that $P\cL_t = \E[g(X,Xt)-g(X,Xt^*)]$ where $g:(x,a)\in\bR^2 \mapsto \int {\bf 1}_{y \geq a} (1 - F_{Y|X=x}(y))\text{d}y + (1/2)a$ and $F_{Y|X=x}$ is the cdf of $Y$ given $X=x$. Therefore, if we denote by $F$ the cdf of $\zeta$, we have
 \begin{align*}
 P\cL_t =  \bE\int_{Xt}^{Xt^*}(1-F_{Y|X=X}(y))dy =  & \frac{1-\delta}{2}\int_{t-t^*}^0 (1-F(y))dy + \frac{1-\delta}{2}\int_{t^*-t}^0 (1-F(y))dy\\ 
 &+\frac{\delta}{2}\int_{(1+R)(t-t^*)}^0(1-F(y))dy + \frac{\delta}{2}\int_{(1+R)(t^*-t)}^0(1-F(y))dy.
 \end{align*}Let us choose $K$ such that $\sqrt{C_{K,r}}\leq \sqrt{\bE X^2}\delta^\prime$ (which holds for instance when $865A^2K\leq 17 x/128$). In that case, $|t-t^*|\leq \delta^\prime$ and so $(1-F(y))=(1/2 - y)$ for all $y\in[-|t-t^*|, |t-t^*|]$. We therefore have
 \begin{equation*}
 \frac{1-\delta}{2}\int_{t-t^*}^0 (1-F(y))dy + \frac{1-\delta}{2}\int_{t^*-t}^0 (1-F(y))dy = \frac{(1-\delta)(t-t^*)^2}{2}.
 \end{equation*}Moreover, since $(1+R)|t-t^*| =(1+4\sqrt{xN})\sqrt{C_{K,r}/\bE X^2}\leq 5\sqrt{xN}\delta^\prime = 5x/(8\sqrt{2})$, we have
 \begin{equation*}
 \frac{\delta}{2}\int_{(1+R)(t-t^*)}^0(1-F(y))dy + \frac{\delta}{2}\int_{(1+R)(t^*-t)}^0(1-F(y))dy\geq  \frac{-10x\delta}{8\sqrt{2}}= \frac{-5}{4\sqrt{2}N}.
  \end{equation*}
Assume that $(1/16)^2 865K\geq 18*40/\sqrt{2}$, so $|t-t^*|\geq C_{K,r}/18\geq 40/(\sqrt{2}N)$.
Then, we have
\begin{equation*}
P\cL_t\geq (1-\delta)(t-t^*)^2/2 - 5/(4\sqrt{2}N)\geq (t-t^*)^2/16\enspace.
\end{equation*} 
For $A=1/(16\times 18)$, this yields $P\cL_t\geq A \bE[X(t-t^*)^2]$, which concludes the proof of the Berntein's assumption. 

\section{Simulation study}\label{sec:simu}

This section provides a short simulation study that illustrates our theoretical findings for the minmax MOM estimators. Let us consider the following setup: $X = (\xi_1,\cdots,\xi_d)$, where $(\xi_j)_{j=1}^d$ are independent and identically distributed, with $\xi_1  \sim \mathcal{T}(5) $, and
  \begin{align*}
    \log \bigg( \frac{\mathbf{P}(Y=1|X)}{\mathbf{P}(Y=-1|X)}  \bigg) = \inr{X,t^{*}} + \epsilon
  \end{align*} 
 where $\epsilon \sim \cL\cN(0,1)$. Let $(X_i,Y_i)_{i=1}^N$ be i.i.d with the same distribution as $(X,Y)$. We the study the minmax MOM estimator defined as:
 \begin{equation}\label{lr}
 \hat{t}^{\text{MOM}}_K  \in \arg \min_{t \in \mathbb{R}^p}  \sup_{\tilde{t} \in \mathbb{R}^p} \text{MOM}_K(\ell_t - \ell_{\tilde{t}}) \enspace. 
 \end{equation}
 Following \cite{lecue2017robust}, a gradient ascent-descent step is performed on the empirical incremental risk $(t,\tilde{t})\to P_{B_k}(\ell_t-\ell_{\tilde t})$ constructed on the block $B_k$ of data realizing the median of the empirical incremental risk.
Initial points $t_0\in\R^d$ and $\tilde{t}_0\in\R^d$ are taken at  random.  
In logistic regression, the step sizes $\eta$ and $\tilde{\eta}$ are usually chosen equal to $\|\bX\bX^\top\|_{\text{op}}/4N$, where $\bX$ is the $N\times d$ matrix with row vectors equal to $X_1^\top, \cdots, X_N^\top$ and $\|\cdot\|_{\text{op}}$ denotes the operator norm. 
In a corrupted environment, this choice might lead to disastrous performance. 
This is why $\eta$ and $\tilde{\eta}$ are computed at each iteration using only data in the median block: let $B_k$ denote the median block at the current step, then one chooses $\eta =\tilde \eta = \|\bX_{(k)}\bX_{(k)}^\top\|_{\text{op}}/4|B_k|$ where $\bX_{(k)}$ is the $|B_k| \times p$ matrix with rows given by $X_i^T$ for $i \in B_k$. 
%
%The formal algorithm is given in Algorithm~\ref{Alg:ElementaryIdea}. 
In practice, $K$ is chosen by robust cross-validation choice as in \cite{lecue2017robust}.

In a first approach and according to our theoretical results, the blocks are chosen at the beginning of the algorithm. As illustrated in Figure~\ref{Fig:Ill}, this first strategy has some limitations. To understand the problem, for all $k=1,\ldots, K$, let $C_k$ denote the following set
\[
C_k = \left\{t\in\R^d: P_{B_k}\ell_t=\text{Median}\left\{P_{B_1}\ell_t,\ldots,P_{B_K}\ell_t\right\}\right\}\enspace.
\]
If the minimum of $t\to P_{B_k}\ell_t$ lies in $C_k$, the algorithm typically converges to this minimum if one iteration enters $C_k$. As a consequence, when the minmax MOM estimator \eqref{lr} lies in another cell, the algorithm does not converge to this estimator. 

To bypass this issue, the partition is changed at every ascent/descent steps of the algorithm, it is chosen uniformly at random among all equipartition of the dataset. This alternative algorithm is described in Algorithm~\ref{Alg:RandBlocks}. In practice, changing the partition seems to widely accelerate the convergence (see Figure~\ref{Fig:Ill}).

\begin{algorithm}[h]  \label{algo2}
  \SetAlgoLined
  \KwIn{The number of block $K$, initial points $t_0$ and $\tilde{t}_0$ in $\mathbb{R}^p$ and the stopping criterion $\epsilon >0$}
  \KwOut{ An estimator of $t^*$}
  %$\cdot$ Split the dataset into $K$ blocks at random.\\
  %$\cdot$ Find $k \in [K]$ such than $\text{MOM}_K  \ell_{t_0} = P_{B_k} \ell_{t_0} $\\
% 
  \While{$\|t_{i}-\tilde{t}_i\|_{2} \geq \epsilon$}{
    Split the data into $K$ disjoint blocks $(B _k)_{k \in \{1, \cdots, K \}}$ of equal sizes chosen at random: $B_1 \cup \cdots \cup B_K = \{ 1, \cdots ,N \} $.\\
  Find $k \in [K] $ such that $\text{MOM}_K \ell_{t_i}= P_{B_k}  \ell_{t_i}$.\\
  Compute $\eta= \tilde{\eta} =  \|\bX_{(k)}^T \bX_{(k)}\|_{op}/4N$.\\
  Update $t_{i+1}  = t_i - \frac{1}{\eta}\nabla_t(P_{B_k}\ell_t)_{|t=t_i} $ 
  and 
  $\tilde{t}_{i+1}  = \tilde{t}_i - \frac{1}{\tilde{\eta}}\nabla_{\tilde{t}}(P_{B_k}\ell_{\tilde{t}})_{|\tilde{t}=\tilde{t}_i} $.
  }
  \caption{Descent-ascent gradient method with  blocks of data chosen at random at every steps.\label{Alg:RandBlocks}}
\end{algorithm}

 Simulation results are gathered in Figure~\ref{Fig:Ill}. In these simulations, there is no outlier, $N=1000$ and $d=100$ with $(X_i,Y_i)_{i=1}^{1000}$ i.i.d with the same distribution as $(X,Y)$. 
 Minmax MOM estimators \eqref{lr} are compared with the Logistic Regression algorithm from the  scikit-learn library of \cite{scikit-learn}.

 The upper pictures compare performance of MOM ascent/descent algorithms with fixed and changing blocks. These pictures give an example where the fixed block algorithm is stuck into local minima and another one where it does not converge. In both cases, the changing blocks version converges to $t^*$. 

  Running times of logistic regression (LR) and its MOM version (MOM LR) are compared in the lower picture of Figure~\ref{Fig:Ill} in a dataset free from outliers. 
%  As running time is mainly related to the way algorithms are coded, 
  LR and MOM LR are coded with the same algorithm in this example, meaning that MOM gradient descent-ascent and simple gradient descent are performed with the same descent algorithm. 
As illustrated in Figure~\ref{Fig:Ill}, running each step of the gradient descent on one block only and not on the whole dataset accelerates the running time. The larger the dataset, the bigger the benefit is expected. 

\begin{figure}[h]
\begin{minipage}{\linewidth}
  \centering
  \begin{minipage}{0.45\linewidth}
      \includegraphics[width=\linewidth]{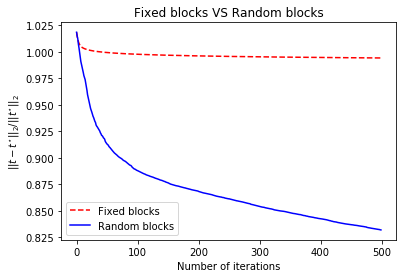}\label{fig:LogRegNR}
  \end{minipage}
  \hspace{0.05\linewidth}
  \begin{minipage}{0.45\linewidth}
      \includegraphics[width=\linewidth]{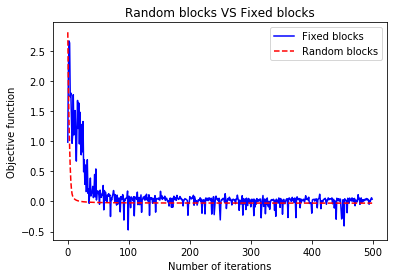}
  \end{minipage}
  \begin{minipage}{0.45\linewidth}
    \includegraphics[width=\linewidth]{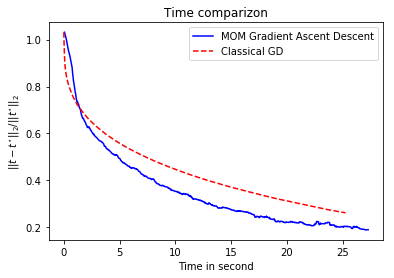}
  \end{minipage}
  %\begin{minipage}{0.45\linewidth}
  % \includegraphics[width=\linewidth, scale=1.2]{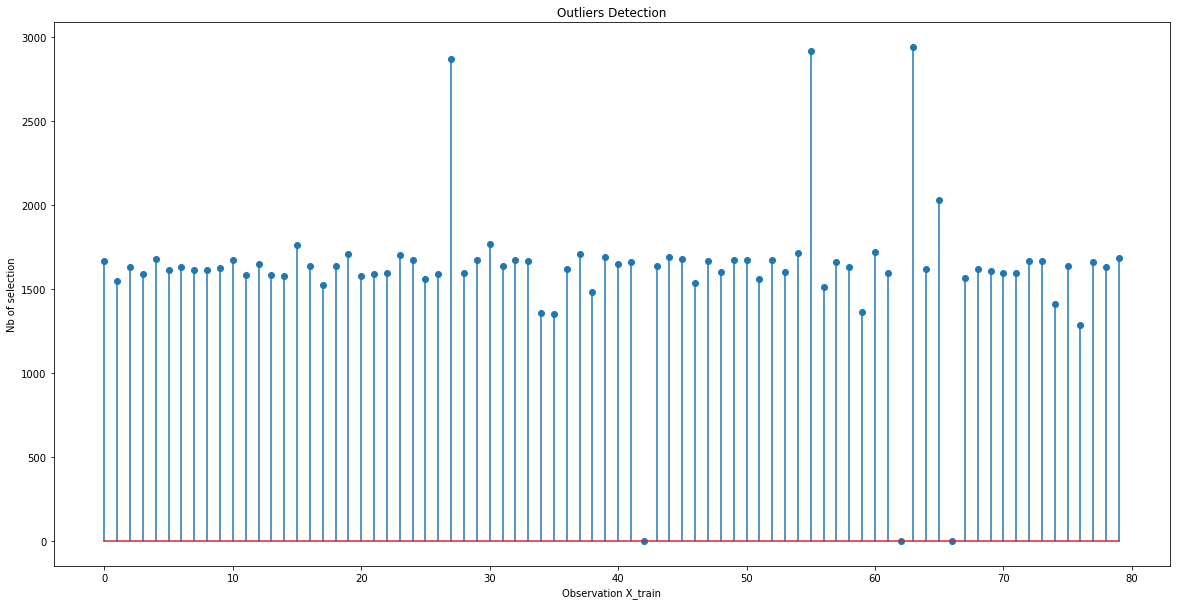}
  %\end{minipage}%\caption{Logistic regression and its MOM version}
\end{minipage}
\caption{Top left and right: Comparizon of the algorithm with fixed and changing blocks. Bottom: Comparizon of running time between classical gradient descent and algorithm~\ref{Alg:RandBlocks}. In all simulation $N=1000$, $p=100$ and there is no outliers.\label{Fig:Ill}}
\end{figure}

The resistance to outliers of logistic regression and its minmax MOM alternative are depicted in Figure~\ref{fig:MOMVsERMLogReg} in the introduction. We added an increasing number of outliers to the dataset. Outliers $\{(X_i,Y_i),i\in\cO\}$ in this simulation are such that $X_i \sim \cL\cN(0,5)$ and $Y_i =  -sign(\inr{X_i,t}+ \epsilon_i)$, with $\epsilon_i\sim\epsilon$ as above. Figure~\ref{fig:MOMVsERMLogReg} shows that logistic classification is mislead by a single outlier while MOM version maintains reasonable performance with up to 50 outliers (i.e $5$\% of the database is corrupted).

A byproduct of Algorithm~\ref{Alg:RandBlocks} is an outlier detection algorithm. Each data receives a score equal to the number of times it is selected in a median block in the random choice of block version of the algorithm. 
%This score is adapted to the loss we considered and the target $f^*$ we have to estimate, so this depth seems natural in our learning task.
%This score reveals how many times the data has been used in the algorithm. 
The first iterations may be misleading: before convergence, the empirical loss at the current point may not reveal the centrality of the data because the current point may be far from  $t^*$.
% which is ultimately the point that may tell which data is useful to solve the learning task and which one is not. 
Simulations are run with $N=100$, $d = 10$ and $5000$ iterations and therefore only the score obtained by each data in the last $4000$ iterations are displayed. $3$ outliers $(X_i,Y_i)_{i \in \{1,2,3\}}$ with $X_i = (10)_{j=1}^d$ and $Y_i = -sign(\inr{X_i,t})$ have been introduced at number $42$, $62$ and $66$. Figure~\ref{fig:outliers_detect} shows that these are not selected once.  

 \begin{figure}[h] 
	\includegraphics[width=\linewidth,scale=0.5]{}
	\caption{Outliers Detection Procedure for $N=100$, $p=10$ and outliers are $i=42, 62, 66$ \label{fig:outliers_detect}}
\end{figure}

%In Algorithm~\ref{Alg:RandBlocks}, the median block is defined as the integer $k$ such that $\text{MOM}_K \ell_{t_i}= P_{B_k}  \ell_{t_i}$. An alternative choice is to choose $k$ such that $\text{MOM}_K (\ell_{t_i}- \ell_{\tilde{t_i}} ) = P_{B_k}  (\ell_{t_i}-\ell_{\tilde{t_i}})$. The latter choice leads to poor outlier detection properties. The reason is that once the algorithm has converged, $t_i \approx \tilde{t_i}$, so large values of $\ell_{t_i}(X_j,Y_j)$ can be balanced by those of $\ell_{\tilde{t}_i}(X_j,Y_j)$ for $j \in \mathcal{O}$. Therefore, blocks corrupted by outliers may be selected as median block. This problem does not occur when the median block is given by $P_{B_k}\ell_{t_i} = \text{MOM}_K \ell_{t_i}$ as in Algorithm~\ref{Alg:RandBlocks}.

\section{Conclusion} 
The paper introduces a new homogenity argument for learning problems with convex and Lipschitz losses. 
This argument allows to obtain estimation rates and oracle inequalities for ERM and minmax MOM estimators improving existing results.
The ERM requires sub-Gaussian hypotheses on the class $F$ with respect to the distribution of the design and a local Bernstein condition (see Theorem~\ref{theo:erm}), both assumptions can be removed for minmax MOM estimators (see Theorem~\ref{theo:bernstein_kappa}). 
The local Bernstein conditions provided in this article can be verified in several learning problems. 
In particular, it allows to derive optimal risk bounds in examples where analyses based on the small ball hypothesis fail. 
%the neighborhood around $f^*$ onto which the Bernstein Condition is assumed are natural $L_2$ or ``excess risk'' balls of radius given by the rate of convergence.
Minmax MOM estimators applied to convex and Lipschitz losses are efficient under weak assumptions on the outputs $Y$, under minimal $L_2$ assumptions on the class $F$ with respect to the distribution of the design and the results are robust to the presence of few outliers in the dataset. 
A modification of these estimators can be implemented efficiently and confirm all these conclusions.
% \newpage

\appendix
\section{Proof of Theorems~\ref{theo:erm},~\ref{theo:main}, \ref{theo:MOM_lepski} and \ref{theo:main_without_bernstein_cond}} \label{app:proof}

\subsection{Proof of Theorem~\ref{theo:erm}} \label{proof_erm} 

The proof is splitted in two parts. First, we identify an event where the statistical behavior of the regularized estimator $\hat{f}^{ERM}$ can be controlled. Then, we prove that this event holds with probability at least \eqref{eq:proba}. Introduce $\theta=1/(2A)$ and define the following event:
	\begin{equation*}
	\Omega := \left\{ 
	\forall f\in F \cap  (f^{*} + r_2(\theta ) B_{L_2}), \quad \big|(P-P_N)\cL_f\big|\leq \theta r_2^2(\theta )  \right\}
	\end{equation*} 
	where $\theta$ is a parameter appearing in the definition of $r_2$ in Definition~\ref{def:function_r}.
	
	\begin{Proposition}\label{prop:algebra} On the event $\Omega$, one has 
		\begin{align*}
		\|\hat{f}^{ERM} - f^*\|_{L_2} & \leq r_2(\theta ) \mbox{ and }
		P\cL_{\hat{f}^{ERM}}  \leq  \theta r_2^2(\theta ).
		\end{align*}
	\end{Proposition}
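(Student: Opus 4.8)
The plan is to combine the empirical-process control encoded in $\Omega$ with a convexity (homogeneity) argument and the local Bernstein condition of Assumption~\ref{assum:fast_rates}. Write $\theta=1/(2A)$ throughout. The starting point is the optimality of the ERM: since $f^*\in F$ and $\hat{f}^{ERM}$ minimizes $f\mapsto P_N\ell_f$ over $F$, one has $P_N\cL_{\hat{f}^{ERM}}=P_N(\ell_{\hat{f}^{ERM}}-\ell_{f^*})\leq 0$.

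First I would establish the $L_2$ bound by contradiction. Suppose $\|\hat{f}^{ERM}-f^*\|_{L_2}>r_2(\theta)$. Using convexity of $F$ (Assumption~\ref{assum:convex}), set $\alpha=r_2(\theta)/\|\hat{f}^{ERM}-f^*\|_{L_2}\in(0,1)$ and define $f_0=(1-\alpha)f^*+\alpha\hat{f}^{ERM}\in F$, which satisfies $\|f_0-f^*\|_{L_2}=r_2(\theta)$ exactly; thus $f_0$ lies on the sphere where Assumption~\ref{assum:fast_rates} applies. Because each $\bar\ell(\cdot,y)$ is convex (Assumption~\ref{assum:lip_conv}), the map $f\mapsto P_N\ell_f$ is convex, hence $P_N\cL_{f_0}\leq(1-\alpha)P_N\cL_{f^*}+\alpha P_N\cL_{\hat{f}^{ERM}}=\alpha P_N\cL_{\hat{f}^{ERM}}\leq 0$.

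Next, since $f_0\in F\cap(f^*+r_2(\theta)B_{L_2})$, the event $\Omega$ gives $P\cL_{f_0}\leq P_N\cL_{f_0}+\theta r_2^2(\theta)\leq\theta r_2^2(\theta)$. Now the local Bernstein condition, valid precisely because $\|f_0-f^*\|_{L_2}=r_2(\theta)=r_2(1/(2A))$, yields $r_2^2(\theta)=\|f_0-f^*\|_{L_2}^2\leq A\,P\cL_{f_0}\leq A\theta r_2^2(\theta)=\tfrac12 r_2^2(\theta)$, the desired contradiction. Hence $\|\hat{f}^{ERM}-f^*\|_{L_2}\leq r_2(\theta)$. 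The excess risk bound is then immediate: we now have $\hat{f}^{ERM}\in F\cap(f^*+r_2(\theta)B_{L_2})$, so applying $\Omega$ once more together with $P_N\cL_{\hat{f}^{ERM}}\leq 0$ gives $P\cL_{\hat{f}^{ERM}}\leq P_N\cL_{\hat{f}^{ERM}}+\theta r_2^2(\theta)\leq\theta r_2^2(\theta)$.

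The routine pieces are the ERM optimality and the two transfers via $\Omega$. The crux — the step I expect to carry the argument — is the homogeneity reduction to the sphere: convexity of $F$ lets me pull a possibly far-away minimizer back to a boundary point $f_0$ at distance exactly $r_2(\theta)$, while convexity of the loss guarantees $f_0$ still has nonpositive empirical excess loss. This is what makes the local Bernstein condition usable despite it holding only on the sphere, and it is precisely the feature distinguishing this argument from one requiring a global Bernstein condition.
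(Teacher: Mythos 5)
Your proof is correct and takes essentially the same approach as the paper's: both reduce to a point $f_0$ on the $L_2$-sphere of radius $r_2(\theta)$ via convexity of $F$ and of the loss, then combine the deviation control defining $\Omega$ with the local Bernstein condition, which applies exactly on that sphere. The only difference is organizational: you argue by contradiction on $\hat{f}^{ERM}$ itself, bounding $P_N\cL_{f_0}\leq \alpha P_N\cL_{\hat{f}^{ERM}}\leq 0$ from above, while the paper proves directly that $P_N\cL_f>0$ for every $f$ outside the ball using the pointwise inequality $\psi_i(\alpha u)\geq \alpha\psi_i(u)$ — these are contrapositive forms of the same convexity fact.
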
 

	\begin{proof}
		By construction, $\hat{f}^{ERM}$ satisfies $P_N\cL_{\hat{f}^{ERM}}  \leq 0 $. Therefore, it is sufficient to show that, on $\Omega$, if $\|f-f^{*}\|_{L_2} > r_2(\theta )$, then $P_N \cL_f >0$. 
%		Indeed, it will insure that on the latter event $\Omega$, $\|\hat{f}^{ERM}- f^{*}\|_{L_2} \leq r_2(\theta )$.
%
		Let $f\in F$ be such that $\|f-f^{*}\|_{L_2 } > r_2(\theta )$. By convexity of $F$, there exists $f_0 \in  F \cap (f^{*} + r_2(\theta)S_{L_2})$ and $\alpha > 1$ such that
		\begin{equation} \label{homo_argument}
		f = f^{*} + \alpha (f_0 - f^{*}) \enspace. 
		\end{equation}
%		The convexity of the loss allows to link the empirical excess risk of $f$ to the one of $f_0$ through an homogeneity argument.  
		For all $i \in \{1,\cdots,N \}$, let $\psi_i: \mathbb R \rightarrow \mathbb R $ be defined for all $u\in \R$ by 
		\begin{equation}\label{eq:fct_psi}
		\psi_i(u) = \overline{\ell} (u + f^{*}(X_i), Y_i) - \overline{\ell} (f^{*}(X_i), Y_i).
		\end{equation}
		The functions $\psi_i$ are such that $\psi_i(0) = 0$, they are convex because $\overline{\ell}$ is, in particular $\alpha\psi_i(u) \leq \psi_i(\alpha u)$ for all $u\in\mathbb R$ and $\alpha \geq 1$ and $\psi_i(f(X_i) - f^{*}(X_i) )=  \overline{\ell} (f(X_i), Y_i) - \overline{\ell} (f^{*}(X_i), Y_i) $ so that the following holds:
		\begin{align} \label{conv_arg}
		\nonumber P_N \cL_f & = \frac{1}{N} \sum_{i=1}^{N}  \psi_i \big( f(X_i)- f^{*}(X_i) \big) = \frac{1}{N} \sum_{i=1}^{N}  \psi_i(\alpha( f_0(X_i)- f^{*}(X_i) ))\\
		&\geq \frac{\alpha}{N} \sum_{i=1}^{N}   \psi_i(( f_0(X_i)- f^{*}(X_i))) = \alpha P_N \cL_{f_0}.
		\end{align}

		Until the end of the proof, the event $\Omega$ is assumed to hold. Since $f_0 \in F \cap (f^{*}+  r_2(\theta ) S_{L_2})$,  $P_N \cL_{f_0} \geq P\cL_{f_0} - \theta r_2^2(\theta )$.
		Moreover, by Assumption~\ref{assum:fast_rates}, $P\cL_{f_0} \geq A^{-1} \|f_0-f^*\|_{L_2 }^2 = A^{-1}r_2^2(\theta) $, thus
		\begin{equation} \label{f_0}
		P_N \cL_{f_0} \geq (A^{-1} - \theta ) r_2^2(\theta). 
		\end{equation}
		From Eq.~\eqref{conv_arg} and \eqref{f_0}, $P_N \cL_f > 0$ since $A^{-1}>\theta$. Therefore, $\|\hat{f}^{ERM}-f^{*}\|_{L_2 } \leq r_2^2(\theta )$. This proves the $L_2$-bound. 

		Now, as $\|\hat{f}^{ERM}-f^{*}\|_{L_2 } \leq r_2^2(\theta )$, $|(P-P_N)\cL_{\hat{f}^{ERM}}|\leq \theta r_2^2(\theta )$. Since $P_N\cL_{\hat{f}^{ERM}}\leq 0$, 
		\begin{equation*}
		P\cL_{\hat{f}^{ERM}} = P_N\cL_{\hat{f}^{ERM}} + (P-P_N)\cL_{\hat{f}^{ERM}}\leq \theta r_2^2(\theta )\enspace.
		\end{equation*}
		This show the excess risk bound.
	\end{proof}

	Proposition~\ref{prop:algebra} shows that $\hat{f}^{ERM}$ has the risk bounds given in Theorem~\ref{theo:erm} on the event $\Omega$. 
	To show that $\Omega$ holds with probability \eqref{eq:proba}, recall the following results from \cite{pierre2017estimation}.
	\begin{Lemma} \cite{pierre2017estimation} \label{lem:subgauss}[Lemma 8.1]
		Grant Assumptions~\ref{assum:lip_conv} and~\ref{ass:sub-gauss}. Let $F^\prime\subset F$ with finite $L_2$-diameter $d_{L_2}(F^\prime)$. For every $u>0$, with probability at least $1-2\exp(-u^2)$,
		\begin{equation*}
		\sup_{f,g\in F^\prime}\left|(P-P_N)(\cL_f-\cL_g)\right|\leq \frac{16L}{\sqrt{N}} \left(w(F^\prime) +   u d_{L_2}(F^\prime)\right)\enspace.
		\end{equation*} 
	\end{Lemma}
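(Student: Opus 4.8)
The quantity to control is the two-sided oscillation of the centered empirical process $Z_f := (P-P_N)\ell_f$, since $\cL_f-\cL_g = \ell_f-\ell_g$ and hence $(P-P_N)(\cL_f-\cL_g) = Z_f - Z_g$; in particular $f^*$ drops out and only increments matter. The plan is to show that $(Z_f)_{f\in F^\prime}$ has sub-Gaussian increments with respect to a rescaled $L_2(P)$ metric, and then to invoke a generic chaining tail bound, which produces exactly a ``complexity'' term of the size of the Gaussian mean-width plus a deviation term of the size of the diameter, with the $e^{-u^2}$ probability. Concretely, I would apply the chaining estimate of \cite{MR3184689}: for a centered process with $\norm{Z_s-Z_t}_{\psi_2}\le d(s,t)$, one has $\sup_{s,t}|Z_s-Z_t|\le C\big(\gamma_2(T,d)+u\,\mathrm{diam}(T,d)\big)$ with probability at least $1-2\exp(-u^2)$.

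First I would establish the increment bound. For any $f,g\in F^\prime$ and any index $i$, Assumption~\ref{assum:lip_conv} gives the pointwise domination $|(\ell_f-\ell_g)(X_i,Y_i)| = |\bar\ell(f(X_i),Y_i)-\bar\ell(g(X_i),Y_i)|\le L\,|(f-g)(X_i)|$, uniformly in $Y_i$; this is the step where the outputs disappear. Since $f-g\in F-F$ is $B$-sub-Gaussian (Assumption~\ref{ass:sub-gauss}), the random variable $(f-g)(X_i)$ has $\psi_2$-norm $\lesssim B\norm{f-g}_{L_2}$, and a variable dominated in absolute value by a sub-Gaussian variable is itself sub-Gaussian with no larger norm; hence $\norm{(\ell_f-\ell_g)(X_i,Y_i)}_{\psi_2}\lesssim LB\norm{f-g}_{L_2}$. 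Centering preserves sub-Gaussianity up to a constant, and summing $N$ independent centered sub-Gaussian terms contributes the $1/\sqrt{N}$ factor, so $\norm{Z_f-Z_g}_{\psi_2}=\norm{(P-P_N)(\ell_f-\ell_g)}_{\psi_2}\lesssim (LB/\sqrt{N})\,\norm{f-g}_{L_2}$. Thus the natural metric of the process is $d(f,g)=c\,(LB/\sqrt{N})\norm{f-g}_{L_2}$.

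With this metric identified, I would plug into the chaining tail bound. The functional $\gamma_2$ is homogeneous in the metric, so $\gamma_2(F^\prime,d)=c(LB/\sqrt{N})\,\gamma_2(F^\prime,\norm{\cdot}_{L_2})$, and $\mathrm{diam}(F^\prime,d)=c(LB/\sqrt{N})\,d_{L_2}(F^\prime)$. By the majorizing measure theorem, $\gamma_2(F^\prime,\norm{\cdot}_{L_2})\asymp w(F^\prime)$, because, by Definition~\ref{def:gauss_mean_width}, the canonical Gaussian process indexed by $F^\prime$ has $\norm{\cdot}_{L_2}$ as its intrinsic metric. Combining these gives $\sup_{f,g\in F^\prime}|(P-P_N)(\cL_f-\cL_g)|\lesssim (L/\sqrt{N})\,\big(w(F^\prime)+u\,d_{L_2}(F^\prime)\big)$ with probability at least $1-2\exp(-u^2)$, which is the claimed form; the explicit constant $16$ is obtained by tracking the numerical constants through the sub-Gaussian increment estimate and the chaining bound (with the $B$-dependence absorbed into the convention on the sub-Gaussian constant).

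The main obstacle is the second paragraph together with the majorizing-measure identification: one must verify carefully that the Lipschitz-plus-sub-Gaussian mechanism yields a genuinely sub-Gaussian increment whose scale is the population $L_2$ distance (not merely the empirical one), and that the resulting $\gamma_2$ functional is indeed equivalent to the Gaussian mean-width $w(F^\prime)$ rather than to some empirical surrogate. An alternative route for the expectation part only would be symmetrization followed by Talagrand's contraction principle to strip the loss (using $\psi_i(0)=0$ and $L$-Lipschitzness exactly as in \eqref{conv_arg}) and then a Rademacher-to-Gaussian comparison; this cleanly recovers the $w(F^\prime)/\sqrt{N}$ term, but the deviation term $u\,d_{L_2}(F^\prime)$ is awkward to obtain this way because the losses need not be bounded, so I would rely on the chaining tail inequality throughout.
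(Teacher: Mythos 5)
The paper offers no proof of this lemma at all: it is imported verbatim from \cite{pierre2017estimation} (Lemma 8.1), and your argument --- pointwise Lipschitz domination $|\ell_f-\ell_g|\le L|f-g|$ to get sub-Gaussian increments of $(P-P_N)\ell_f$ in the population $L_2$ metric, then the generic chaining tail bound of \cite{MR3184689} together with the majorizing measure theorem to replace $\gamma_2(F^\prime,\norm{\cdot}_{L_2})$ by $w(F^\prime)$ --- is exactly the standard route taken in that reference, so your proposal is correct and takes essentially the same approach. The only loose end, which you flag yourself, is that your increment estimate carries the sub-Gaussian constant $B$, so the clean constant $16L$ in the statement is recovered only under a normalization convention on $B$; this mirrors the source and is not a genuine gap.
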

	It follows from Lemma~\ref{lem:subgauss} that for any $u>0$, with probability larger that $1-2\exp(-u^2)$,
	\begin{align*}
	&\sup_{f \in F \cap (f^{*} + r_2(\theta) B_{L_2})} \big | (P-P_N)\cL_f  \big|  \leq \sup_{f,g \in  F \cap (f^{*} + r_2(\theta) B_{L_2})} \big | (P-P_N)(\cL_f-\cL_g)  \big| \\
	& \leq \frac{16L}{\sqrt{N}} \big(   w((F-f^*)\cap r_2(\theta)B_{L_2}) + ud_{L_2} ((F-f^*)\cap r_2(\theta)B_{L_2})   \big)
	\end{align*}
	where $d_{L_2} ((F-f^*)\cap r_2(\theta)B_{L_2}) \leq r_2(\theta)$. By definition of the complexity parameter (see Eq.~\eqref{def:function_r}), for $u = \theta \sqrt{N} r_2(\theta )/(64L) $, with probability at least 
	\begin{equation}
	1-2\exp\big(-\theta^2N r_2^2(\theta ) /(16^3L^2 ) \big)\enspace,
	\end{equation} 
	for every $f$ in $F\cap(f^*+ r_2(\theta)B_{L_2} )$,
	\begin{equation}
	\big | (P-P_N) \cL_f \big|  \leq \theta  r_2^2(\theta ).
	\end{equation}
	Together with Proposition~\ref{prop:algebra}, this concludes the proof of Theorem~\ref{theo:erm}.

\subsection{Proof of Theorem~\ref{theo:main} } \label{proof_MOM}
The proof is splitted in two parts. First, we identify an event $\Omega_K$ where the statistical properties of $\hat f$ from Theorem~\ref{theo:main}  can be established. Next, we prove that this event holds with probability \eqref{eq:proba_MOM}. Let $\alpha, \theta$ and $\gamma$ be positive numbers to be chosen later. Define 
\begin{equation*}
C_{K,r} = \max \bigg(\frac{4L^2K}{\theta^2 \alpha N},\tilde{r}_2^2(\gamma) \bigg) 
\end{equation*} 
where the exact form of $\alpha, \theta$ and $\gamma$ are given in Equation~\eqref{choice_constants}. Set the event $\Omega_K$ to be such that
\begin{equation}\label{eq:event_omegaK} 
\Omega_K = \bigg \{ \forall f \in F \cap \left(f^*+ \sqrt{C_{K,r}}B_{L_2}\right), \exists J\subset\{1,\ldots,K\}: |J|>K/2 \mbox{ and } \forall k\in J, \left| (P_{B_k} - P)\cL_f \right| \leq \theta C_{K,r} \bigg \}\enspace.
\end{equation}

\subsubsection{Deterministic argument}
The goal of this section is to show that, on the event $\Omega_K$, $\|\hat f - f^{*}\|_{L_2}^2  \leq C_{K,r}$ and $P\cL_{\hat f}\leq 2 \theta C_{K,r}$. 
\begin{Lemma}\label{lem:lem1_MOM}
	If there exists $\eta>0$ such that  
	\begin{align}\label{obj_proof}
	\sup_{f \in F \backslash \left(f^{*} +  \sqrt{C_{K,r}}B_{L_2}\right) }&  \hspace{0.2cm} \MOM{K}{\ell_{f^{*}}-\ell_f} < - \eta \quad \mbox{  and }  \sup_{f \in F\cap\left(f^{*}+  \sqrt{C_{K,r}} B_{L_2}\right)} \MOM{K}{\ell_{f^{*}}-\ell_f} \leq  \eta \enspace,
	\end{align}
	then $\|\hat f - f^{*} \|_{L_2 }^2 \leq  C_{K,r}$.
\end{Lemma}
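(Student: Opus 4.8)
The plan is to exploit the minmax structure of the estimator together with the antisymmetry of the median-of-means functional. Write $T_K(f) = \sup_{g \in F} \MOM{K}{\ell_f - \ell_g}$, so that $\hat{f}$ is by definition a minimizer of $T_K$ over $F$. The entire argument then reduces to comparing $T_K$ evaluated at $f^*$ with $T_K$ evaluated at points lying outside the ball $f^* + \sqrt{C_{K,r}}B_{L_2}$, and ruling out the latter as candidates for the minimizer.

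First I would bound $T_K(f^*)$ from above. Splitting the supremum over $g \in F$ according to whether $g$ lies inside or outside $f^* + \sqrt{C_{K,r}}B_{L_2}$, the first hypothesis in \eqref{obj_proof} controls the outside part by $-\eta$ and the second hypothesis controls the inside part by $\eta$. Since $\eta > 0$, both contributions are at most $\eta$, whence $T_K(f^*) \leq \eta$. Because $\hat{f}$ minimizes $T_K$, this immediately gives $T_K(\hat{f}) \leq T_K(f^*) \leq \eta$.

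Next I would bound $T_K(f)$ from below for every $f \in F \setminus (f^* + \sqrt{C_{K,r}}B_{L_2})$. Choosing the particular competitor $g = f^*$ in the supremum gives $T_K(f) \geq \MOM{K}{\ell_f - \ell_{f^*}}$. The key observation is that $P_{B_k}(\ell_f - \ell_{f^*}) = -P_{B_k}(\ell_{f^*} - \ell_f)$ for every block $B_k$, so by antisymmetry of the median one has $\MOM{K}{\ell_f - \ell_{f^*}} = -\MOM{K}{\ell_{f^*} - \ell_f}$. The first hypothesis in \eqref{obj_proof} then yields $T_K(f) \geq -\MOM{K}{\ell_{f^*} - \ell_f} > \eta$.

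Combining the two estimates, every $f$ outside the ball satisfies $T_K(f) > \eta \geq T_K(\hat{f})$, so such an $f$ cannot be a minimizer of $T_K$; consequently $\|\hat{f} - f^*\|_{L_2} \leq \sqrt{C_{K,r}}$, which is exactly the claimed bound $\|\hat{f} - f^*\|_{L_2}^2 \leq C_{K,r}$. The only delicate point is the antisymmetry $\MOM{K}{\ell_f - \ell_{f^*}} = -\MOM{K}{\ell_{f^*} - \ell_f}$ of the median functional: this is transparent for odd block counts but requires fixing a consistent convention for the empirical median (for instance the lower median) when $K$ is even, and it is the one step I would state carefully rather than treat as automatic.
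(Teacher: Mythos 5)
Your proof is correct and follows essentially the same route as the paper's own argument: bound $T_K(f^*)\leq \eta$ by splitting the supremum over $g$ at the ball $f^*+\sqrt{C_{K,r}}B_{L_2}$, use minimality of $\hat f$ to get $T_K(\hat f)\leq T_K(f^*)\leq\eta$, and lower-bound $T_K(f)\geq \MOM{K}{\ell_f-\ell_{f^*}}>\eta$ outside the ball via the competitor $g=f^*$ and the sign flip of the median, exactly as in the paper's Eq.~\eqref{eq:Task1}--\eqref{eq:Task2}. One caveat on the point you flag (which the paper passes over silently): the lower-median convention you propose is precisely the one for which the sign-flip inequality fails when $K$ is even (the midpoint or upper-median convention does work), though in the paper's application this is harmless because \eqref{obj_proof} is established by exhibiting strictly more than $K/2$ blocks satisfying the relevant bound, which controls any choice of median.
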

\begin{proof}
	Assume that \eqref{obj_proof} holds, then  
	\begin{equation}
	\inf_{f\in F \backslash \left(f^{*} +  \sqrt{C_{K,r}}B_{L_2}\right)} \text{MOM}_K[\ell_f-\ell_{f^*}]> \eta\enspace.\label{eq:Task2} 
	\end{equation}
	Moreover, if $T_K(f)=\sup_{g\in F}\text{MOM}_K[\ell_f-\ell_g]$ for all $f\in F$, then
	\begin{equation}
	T_K(f^{*})=\sup_{f\in  F \cap \left(f^{*}+ \sqrt{C_{K,r}}B_{L_2}\right) }\text{MOM}_K[\ell_{f^*}-\ell_f]\vee \sup_{f\in F \backslash \left(f^{*} +  \sqrt{C_{K,r}}B_{L_2}\right) }\text{MOM}_K[\ell_{f^*}-\ell_f]\leqslant \eta\enspace.\label{eq:Task1}
	\end{equation}
By definition of $\hat{f}$ and \eqref{eq:Task1}, $T_K(\hat{f})\leqslant T_K(f^*)\leqslant \eta$.
	Moreover, by \eqref{eq:Task2}, any $f\in F \backslash \left(f^{*} +  \sqrt{C_{K,r}}B_{L_2}\right)$ satisfies $T_K(f)\geqslant \text{MOM}_K[\ell_f-\ell_{f^*}]> \eta$.
	Therefore $\hat{f} \in  F\cap (f^{*} + \sqrt{C_{K,r}} B_{L_2})$.	
\end{proof} 

%The following lemma shows that, on $\Omega_K$, \eqref{obj_proof} holds for a specific choice of $\eta$. 

\begin{Lemma}\label{lem:lem2_MOM}
Grant Assumption~\ref{assum:fast_rates_MOM} and assume that $\theta-A^{-1}<-\theta$. On the event $\Omega_K$, \eqref{obj_proof} holds with $\eta = \theta C_{K,r}$.
\end{Lemma}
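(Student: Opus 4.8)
The plan is to reduce everything to a statement about $\MOM{K}{\cL_f}$. Since $\ell_{f^*}-\ell_f=-\cL_f$ and the median is an odd, positively homogeneous and monotone functional of its arguments, one has $\MOM{K}{\ell_{f^*}-\ell_f}=-\MOM{K}{\cL_f}$ for every $f\in F$. Thus proving \eqref{obj_proof} with $\eta=\theta C_{K,r}$ is equivalent to establishing, on $\Omega_K$, that $\MOM{K}{\cL_f}\geq -\theta C_{K,r}$ for $f$ in the ball and $\MOM{K}{\cL_f}>\theta C_{K,r}$ uniformly over $f$ outside the ball. The in-ball bound is the easy half: for $f\in F\cap(f^*+\sqrt{C_{K,r}}B_{L_2})$, the definition of $\Omega_K$ in \eqref{eq:event_omegaK} provides a set $J$ with $|J|>K/2$ on which $P_{B_k}\cL_f\geq P\cL_f-\theta C_{K,r}\geq -\theta C_{K,r}$, using $P\cL_f=R(f)-R(f^*)\geq 0$. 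Since strictly more than half of the blocks satisfy $P_{B_k}\cL_f\geq -\theta C_{K,r}$, the median of $(P_{B_k}\cL_f)_k$ is at least $-\theta C_{K,r}$, which is the second inequality of \eqref{obj_proof}.

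For the out-of-ball bound I would transport the homogeneity argument of the ERM proof to the block setting. Given $f$ with $\|f-f^*\|_{L_2}>\sqrt{C_{K,r}}$, convexity of $F$ lets me write $f=f^*+\alpha(f_0-f^*)$ with $f_0\in F\cap(f^*+\sqrt{C_{K,r}}S_{L_2})$ and $\alpha=\|f-f^*\|_{L_2}/\sqrt{C_{K,r}}>1$. Using the functions $\psi_i$ of \eqref{eq:fct_psi}, which are convex and vanish at $0$ so that $\psi_i(\alpha u)\geq \alpha\psi_i(u)$ for $\alpha\geq 1$, one obtains the pointwise bound $P_{B_k}\cL_f\geq \alpha P_{B_k}\cL_{f_0}$ for \emph{every} block $k$. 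Because this inequality holds block by block, monotonicity and positive homogeneity of the median give $\MOM{K}{\cL_f}\geq \alpha\,\MOM{K}{\cL_{f_0}}$.

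It then remains to lower bound $\MOM{K}{\cL_{f_0}}$. As $f_0$ lies on the sphere, $\Omega_K$ supplies more than $K/2$ blocks with $P_{B_k}\cL_{f_0}\geq P\cL_{f_0}-\theta C_{K,r}$, and Assumption~\ref{assum:fast_rates_MOM}, applicable precisely because $\|f_0-f^*\|_{L_2}^2=C_{K,r}$, yields $P\cL_{f_0}\geq A^{-1}\|f_0-f^*\|_{L_2}^2=A^{-1}C_{K,r}$. Hence on those blocks $P_{B_k}\cL_{f_0}\geq (A^{-1}-\theta)C_{K,r}$, so $\MOM{K}{\cL_{f_0}}\geq (A^{-1}-\theta)C_{K,r}$. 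Combining with $\MOM{K}{\cL_f}\geq \alpha\,\MOM{K}{\cL_{f_0}}$, $\alpha>1$, and the standing hypothesis $\theta-A^{-1}<-\theta$ (that is $A^{-1}-\theta>\theta>0$), I get $\MOM{K}{\cL_f}>(A^{-1}-\theta)C_{K,r}>\theta C_{K,r}=\eta$, with a lower bound independent of $\alpha$, hence uniform in $f$ outside the ball. This gives the first inequality of \eqref{obj_proof}.

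The main obstacle, and the place that distinguishes this from the ERM argument, is that the aggregation functional here is the nonlinear median rather than the empirical mean, so linearity of $P_N$ used in Proposition~\ref{prop:algebra} is unavailable. The point that rescues the argument is that the convexity inequality $P_{B_k}\cL_f\geq \alpha P_{B_k}\cL_{f_0}$ is valid for \emph{each} block separately, so only monotonicity and positive homogeneity of the median — not linearity — are needed to pass to $\MOM{K}{\cL_f}\geq \alpha\,\MOM{K}{\cL_{f_0}}$. The second delicate point is that the out-of-ball estimate must be both strict and uniform; this holds because the bound $(A^{-1}-\theta)C_{K,r}$ does not depend on $\alpha$ and strictly exceeds $\eta=\theta C_{K,r}$ exactly under the assumption $\theta-A^{-1}<-\theta$, which is the reason this numerical condition is imposed.
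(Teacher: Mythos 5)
Your proposal is correct and is essentially the paper's own argument: the same convexity/homogeneity reduction of an arbitrary $f$ outside the ball to a point $f_0$ on the $L_2$-sphere via the functions $\psi_i$, the same application of Assumption~\ref{assum:fast_rates_MOM} on that sphere, and the same block-counting on $\Omega_K$ for both halves of \eqref{obj_proof}, including the role of $\theta-A^{-1}<-\theta$ in making the out-of-ball bound strict and uniform. The only cosmetic difference is that you route the conclusion through monotonicity, positive homogeneity and oddness of the median (so that $\MOM{K}{\ell_{f^*}-\ell_f}=-\MOM{K}{\cL_f}$, an identity that for even $K$ requires the symmetric median convention), whereas the paper concludes directly from the fact that strictly more than $K/2$ blocks satisfy $P_{B_k}\cL_f\geq (A^{-1}-\theta)C_{K,r}$, which gives the same bound under any median convention.
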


\begin{proof}
Let $f\in F$ be such that $\|f-f^{*}\|_{L_2 } > C_{K,r}$. By convexity of $F$, there exists $f_0 \in    F \cap \left(f^{*} + \sqrt{C_{K,r}} S_{L_2}\right) $ and $\alpha > 1$ such that $f = f^{*} + \alpha (f_0 - f^{*})$.
%
%The convexity of the loss function allows to link the empirical excess risks over each block $B_k$ of $f$ to the ones of $f_0$ through an homogeneity argument that we explain now (it is the same as the one used in the proof of Theorem~\ref{theo:erm}).  
For all $i \in \{1,\ldots,N \}$, let $\psi_i: \mathbb R \rightarrow \mathbb R $ be defined for all $u\in \R$ by 
\begin{equation}\label{eq:fct_psi_MOM}
\psi_i(u) = \overline{\ell} (u + f^{*}(X_i), Y_i) - \overline{\ell} (f^{*}(X_i), Y_i).
\end{equation}
The functions $\psi_i$ are convex because $\overline{\ell}$ is and such that $\psi_i(0) = 0$, so $\alpha\psi_i(u) \leq \psi_i(\alpha u)$ for all $u\in\mathbb R$ and $\alpha \geq 1$. As $\psi_i(f(X_i) - f^{*}(X_i) )=  \overline{\ell} (f(X_i), Y_i) - \overline{\ell} (f^{*}(X_i), Y_i)$, for any block $B_k$, 
\begin{align} \label{conv_arg_MOM}
\nonumber P_{B_k} \cL_f & = \frac{1}{|B_k|} \sum_{i \in B_k}  \psi_i \big( f(X_i)- f^{*}(X_i) \big)= \frac{1}{|B_k|} \sum_{i \in B_k}  \psi_i(\alpha( f_0(X_i)- f^{*}(X_i) ))\\
&\geq \frac{\alpha}{|B_k|} \sum_{i \in B_k}   \psi_i(( f_0(X_i)- f^{*}(X_i))) = \alpha P_{B_k} \cL_{f_0}.
\end{align}
%We will use \eqref{conv_arg_MOM} to deduce properties on $f$ from the one of $f_0$.

As $f_0 \in F \cap (f^* +  \sqrt{C_{K,r}} S_{L_2})$, on $\Omega_K$, there are strictly more than $K/2$ blocks $B_k$ where $P_{B_k} \cL_{f_0} \geq P\cL_{f_0} - \theta C_{K,r}$. Moreover, from Assumption~\ref{assum:fast_rates_MOM}, $P\cL_{f_0} \geq A^{-1} \|f_0-f^*\|_{L_2 }^2 = A^{-1}C_{K,r} $. Therefore, on strictly more than $K/2$ blocks $B_k$, 
\begin{equation} \label{f_0_MOM}
P_{B_k} \cL_{f_0} \geq (A^{-1} - \theta ) C_{K,r}.
\end{equation}
From Eq.~\eqref{conv_arg_MOM} and ~\eqref{f_0_MOM}, there are strictly more than $K/2$ blocks $B_k$ where $P_{B_k} \cL_f \geq (A^{-1}- \theta) C_{K,r} $. Therefore, on $\Omega_K$, as $(\theta - A^{-1}) < - \theta$,
\begin{equation*}
	\sup_{f \in  F \backslash \left(f^{*} +  \sqrt{C_{K,r}}B_{L_2}\right)}  \hspace{0.2cm} \MOM{K}{\ell_{f^{*}}-\ell_f} < (\theta - A^{-1}) C_{K,r}<-\theta C_{K,r}\enspace.
\end{equation*}

In addition, on the event $\Omega_K$, for all $f \in   F \cap (f^{*} + \sqrt{C_{K,r}}B_{L_2})$, there are strictly more than $K/2$ blocks $B_k$ where $|(P_{B_k}-P) \cL_f | \leq  \theta C_{K,r} $. Therefore   
\begin{equation*}
\MOM{K}{\ell_{f^{*}}-\ell_f} \leq \theta C_{K,r} - P\cL_f \leq  \theta C_{K,r}.
\end{equation*}
% This proves that \eqref{obj_proof} holds on $\Omega_K$ for $\eta = \theta C_{K,r}$.
\end{proof}

%The following lemma establishes the excess risk bound.

\begin{Lemma}\label{lem:lem3_MOM}
Grant Assumption~\ref{assum:fast_rates_MOM} and assume that $\theta - A^{-1}<-\theta$. On the event $\Omega_K$, $P\cL_{\hat{f}} \leq 2\theta C_{K,r}$.
\end{Lemma}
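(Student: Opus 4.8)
The plan is to combine the conclusion of Lemma~\ref{lem:lem1_MOM} with a simple majority-vote (pigeonhole) argument over the blocks $B_k$. First I would recall from the proof of Lemma~\ref{lem:lem1_MOM} that, on $\Omega_K$, the minmax objective $T_K(f)=\sup_{g\in F}\MOM{K}{\ell_f-\ell_g}$ satisfies $T_K(f^*)\leq \eta=\theta C_{K,r}$ (using Lemma~\ref{lem:lem2_MOM} to fix $\eta=\theta C_{K,r}$). Since $\hat f$ minimizes $T_K$, this gives $T_K(\hat f)\leq T_K(f^*)\leq \theta C_{K,r}$. Moreover, Lemma~\ref{lem:lem1_MOM} also yields $\hat f\in F\cap(f^*+\sqrt{C_{K,r}}B_{L_2})$, which is precisely the localization needed to invoke the event $\Omega_K$ at the point $\hat f$ itself.

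Next I would specialize the supremum in $T_K(\hat f)$ to the choice $g=f^*$, so that $\MOM{K}{\cL_{\hat f}}\leq T_K(\hat f)\leq \theta C_{K,r}$, since $\cL_{\hat f}=\ell_{\hat f}-\ell_{f^*}$. Because $\MOM{K}{\cL_{\hat f}}$ is the median of the $K$ block means $P_{B_k}\cL_{\hat f}$, this inequality forces strictly more than $K/2$ blocks to satisfy $P_{B_k}\cL_{\hat f}\leq \theta C_{K,r}$. In parallel, as $\hat f$ lies in the localized $L_2$-ball, the definition of $\Omega_K$ in~\eqref{eq:event_omegaK} provides a further family of strictly more than $K/2$ blocks on which $\left|(P_{B_k}-P)\cL_{\hat f}\right|\leq \theta C_{K,r}$.

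Finally, each of these two families contains strictly more than $K/2$ of the $K$ blocks, so they must intersect. Choosing any block $B_k$ in the intersection and decomposing $P\cL_{\hat f}=(P-P_{B_k})\cL_{\hat f}+P_{B_k}\cL_{\hat f}$ bounds each term by $\theta C_{K,r}$, which yields $P\cL_{\hat f}\leq 2\theta C_{K,r}$, as claimed. The argument is a soft counting step once these ingredients are assembled; the only point requiring care is the bookkeeping around the median, namely verifying that ``the median is at most $\theta C_{K,r}$'' indeed produces a \emph{strict} majority of blocks below that threshold, so that the pigeonhole against the $\Omega_K$ majority is valid. This is exactly where the strict inequalities built into the definitions of $\MOM{K}{\cdot}$ and of $\Omega_K$ are used.
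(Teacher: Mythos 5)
Your proposal is correct and follows essentially the same route as the paper: bound $\MOM{K}{\cL_{\hat f}}\leq T_K(\hat f)\leq T_K(f^*)\leq \theta C_{K,r}$, use the $L_2$-localization of $\hat f$ from Lemmas~\ref{lem:lem1_MOM} and~\ref{lem:lem2_MOM} to invoke $\Omega_K$ at $\hat f$ itself, and intersect the two families of blocks. The one bookkeeping point to correct is your claim that the median bound forces \emph{strictly} more than $K/2$ blocks with $P_{B_k}\cL_{\hat f}\leq \theta C_{K,r}$: depending on the parity of $K$ and the convention for the median, it only guarantees \emph{at least} $K/2$ such blocks, which is exactly what the paper asserts, and your suggested rescue via ``strict inequalities built into the definitions'' does not hold in general. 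No strict majority is needed on that side anyway: the $\Omega_K$ family has cardinality strictly larger than $K/2$, so the two counts sum to strictly more than $K$ and the intersection is nonempty, which is how the paper concludes.
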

\begin{proof}Assume that $\Omega_K$ holds. From Lemmas~\ref{lem:lem1_MOM} and~\ref{lem:lem2_MOM}, $\|\hat{f}-f^{*}\|_{L_2 } \leq \sqrt{C_{K,r}}$.  Therefore, on strictly more than $K/2$ blocks $B_k$, $P \cL_{\hat{f}} \leq P_{B_k} \cL_{\hat{f}} + \theta C_{K,r}$. In addition, by definition of $\hat f$ and \eqref{eq:Task1} (for $\eta = \theta C_{K,r}$),
\begin{equation*}
MOM_K(\ell_{\hat f} - \ell_{f^{*}}) \leq \sup_{f \in F} MOM_K(\ell_{f^{*}} - \ell_{f}) \leq \theta C_{K,r}.
\end{equation*}
As a consequence, there exist at least $K/2$ blocks $B_k$ where $P_{B_k} \cL_{\hat{f}} \leq \theta C_{K,r}$. Therefore, there exists at least one block $B_k$ where both $P \cL_{\hat{f}} \leq P_{B_k} \cL_{\hat{f}} + \theta C_{K,r}$ and  $P_{B_k} \cL_{\hat{f}} \leq \theta C_{K,r}$. Hence $P\cL_{\hat{f}} \leq 2\theta C_{K,r}$. 
\end{proof}

\subsubsection{Stochastic argument}
This section shows that $\Omega_K$ holds with probability at least \eqref{eq:proba_MOM}. 
\begin{Proposition}\label{prop:sto_MOM}
	Grant Assumptions~\ref{assum:lip_conv},~\ref{assum:convex},~\ref{assum:moments} and~\ref{assum:fast_rates_MOM} and assume that $(1-\beta)K\geq |\cO|$. Let $x>0$ and assume that $\beta(1-\alpha-x-8\gamma L/\theta)>1/2$. Then $\Omega_K$ holds with probability larger than $1-\exp(-x^2 \beta K/2)$.
\end{Proposition}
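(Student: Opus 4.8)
The plan is to control, uniformly over the localized ball $F\cap(f^*+\sqrt{C_{K,r}}B_{L_2})$, the number of blocks on which the block-empirical excess loss deviates from its mean by more than $\theta C_{K,r}$, and to show this number stays below $K/2$. Write $W_k(f)=(P_{B_k}-P)\cL_f$ and let $\cK\subset\{1,\dots,K\}$ be the blocks containing only inliers. Since each outlier sits in at most one block, $|\cK|\geq K-|\cO|\geq \beta K$ by the hypothesis $(1-\beta)K\geq|\cO|$. A function $f$ lies in $\Omega_K$ as soon as the number of blocks with $|W_k(f)|\leq\theta C_{K,r}$ exceeds $K/2$; since the good inlier blocks already number at least $|\cK|-\#\{k\in\cK:|W_k(f)|>\theta C_{K,r}\}$, it suffices to prove
\[
\sup_{f\in F\cap(f^*+\sqrt{C_{K,r}}B_{L_2})}\sum_{k\in\cK}\1\{|W_k(f)|>\theta C_{K,r}\}<|\cK|-\tfrac K2
\]
on an event of probability at least $1-\exp(-x^2\beta K/2)$.

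To make the count amenable to empirical process tools I would replace the hard indicator by a $2$-Lipschitz ramp $\varphi$ with $\1\{t\geq1\}\leq\varphi(t)\leq\1\{t\geq1/2\}$ and set $g=\sup_f\sum_{k\in\cK}\varphi(|W_k(f)|/(\theta C_{K,r}))$, so the left-hand side above is $\leq g$. First I would bound $\bE g$. For the per-block expectation, $\varphi(|W_k(f)|/(\theta C_{K,r}))\leq\1\{|W_k(f)|\geq\theta C_{K,r}/2\}$, and Chebyshev together with the variance estimate $\mathrm{Var}(W_k(f))\leq (K/N)L^2\|f-f^*\|_{L_2}^2\leq (K/N)L^2C_{K,r}$ (using $|\cL_f|\leq L|f-f^*|$ from Assumption~\ref{assum:lip_conv} and the moment identities of Assumption~\ref{assum:moments}) gives $\bE\varphi(\cdots)\leq 4KL^2/(N\theta^2C_{K,r})\leq\alpha$, because $C_{K,r}\geq 4L^2K/(\theta^2\alpha N)$. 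Summing over $\cK$ yields $\sum_{k\in\cK}\bE\varphi(\cdots)\leq\alpha|\cK|$.

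The fluctuation $\bE\sup_f\sum_{k\in\cK}(\varphi(\cdots)-\bE\varphi(\cdots))$ I would handle by symmetrization at the block level followed by the contraction principle: $\varphi$ is $2$-Lipschitz and $z\mapsto|z|$ is $1$-Lipschitz (both vanishing at $0$), so one reduces to $(\theta C_{K,r})^{-1}\bE\sup_f|\sum_{k\in\cK}\sigma_kW_k(f)|$. Writing $W_k(f)=(K/N)\sum_{i\in B_k}(\cL_f-P\cL_f)(X_i,Y_i)$ and symmetrizing within each block, the block signs $\sigma_k$ and the internal signs multiply to independent Rademacher variables, reducing the problem to an individual-data Rademacher sum over $J=\cup_{k\in\cK}B_k\subset\cI$, which has $|J|=|\cK|N/K\geq N/2$. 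A further contraction removing the $L$-Lipschitz loss, together with the definition of $\tilde r_2(\gamma)$ in \eqref{comp:par} at radius $\sqrt{C_{K,r}}\geq\tilde r_2(\gamma)$ (legitimate since $F$ is convex, so $F-f^*$ is star-shaped and $r\mapsto r^{-1}\bE\sup_{\|h\|_{L_2}\leq r}|\sum\sigma_i h(X_i)|$ is non-increasing), gives $\bE\sup_f|\sum_{k\in\cK}\sigma_kW_k(f)|\lesssim L\gamma C_{K,r}|\cK|$. Careful bookkeeping of the symmetrization and contraction constants is designed to yield $\bE g\leq(\alpha+8\gamma L/\theta)|\cK|$.

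Finally, $g$ depends on the $|\cK|$ independent inlier blocks and changing the data in one block moves $g$ by at most $1$, so McDiarmid's bounded-difference inequality gives $\bP[g\geq\bE g+x|\cK|]\leq\exp(-2x^2|\cK|)\leq\exp(-x^2\beta K/2)$. On the complementary event, $g\leq(\alpha+x+8\gamma L/\theta)|\cK|$, and the hypothesis $\beta(1-\alpha-x-8\gamma L/\theta)>1/2$ together with $|\cK|\geq\beta K$ yields $(\alpha+x+8\gamma L/\theta)|\cK|<|\cK|-K/2$, which is exactly the bound on the number of bad inlier blocks needed above, so $\Omega_K$ holds. I expect the main obstacle to be the middle step: passing from the block-indexed process to the individual-data process defining $\tilde r_2(\gamma)$ while keeping the constants sharp enough to reach the threshold $8\gamma L/\theta$, since this requires chaining two symmetrizations through the product-of-Rademacher identity and two contractions against the Lipschitz loss.
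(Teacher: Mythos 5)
Your proposal is correct and follows essentially the same route as the paper's proof: the same Lipschitz ramp surrogate for the block-counting indicator (your $2$-Lipschitz $\varphi$ at scale $1$ is the paper's $1$-Lipschitz $\phi$ at scale $2$), the same Chebyshev variance bound giving the $\alpha$ term, McDiarmid's inequality, block-level then within-block symmetrization, two contractions, and the reduction to radius $\tilde r_2(\gamma)$ via star-shapedness of $F-f^*$ (which the paper phrases as a two-case analysis on $C_{K,r}$ using convexity), yielding the identical constant $8\gamma L/\theta$ and the same final count $|\cK|(1-\alpha-x-8\gamma L/\theta)>K/2$.
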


\begin{proof}
	Let $\cF = F \cap \left(f^{*} + \sqrt{C_{K,r}}B_{L_2}\right)$ and set $\phi:t\in\R\to I \{ t\geq 2 \} + (t-1) I \{1 \leq t \leq 2 \}$ so, for all $t \in \mathbb{R}$, $I \{ t\geq 2 \} \leq \phi(t) \leq I \{ t\geq 1 \}$. Let $W_k = ((X_i,Y_i))_{i \in B_k}$, $G_f(W_k) = (P_{B_k} - P)\cL_f$.
%	 and recall that $C_{K,r}=\max \left((4L^2K)/(\theta^2 \alpha N),r_2^2(\gamma) \right)$. 
%	 To prove Proposition~\ref{prop:sto_MOM}, it is sufficient to study the ``counting''empirical process $(z(f): f \in\cF )$ where 
Let
	\begin{align*}
	z(f) & = \sum_{k =1}^K I \{|G_f(W_k)|\leq \theta C_{K,r} \}.
	\end{align*}
%	We want to prove that $z(f)>K/2$ for all $f \in  \cF$. 
	 
	 Let $\mathcal{K}$ denote the set of indices of blocks which have not been corrupted by outliers, $\mathcal{K} = \{k \in \{1,\cdots,K \} : B_k \subset \mathcal{I}\}$ and let $f \in \cF$. Basic algebraic manipulations show that
	\begin{equation*}
	z(f) \geq |\mathcal{K}| - \sup_{f \in \cF } \sum_{k \in \mathcal{K}} \bigg( \phi(2\theta^{-1}C_{K,r}^{-1} | G_f(W_k)|) - \mathbb{E} \phi(2\theta^{-1}C_{K,r}^{-1} | G_f(W_k)|) \bigg) -  \sum_{k \in \mathcal{K} } \mathbb{E}\phi(2\theta^{-1}C_{K,r}^{-1} | G_f(W_k)|) \enspace.
	\end{equation*} 
By Assumptions~\ref{assum:lip_conv} and \ref{assum:moments}, using that $C_{K,r}^2\geq \norm{f-f^*}^2_{L_2 }[(4L^2K)/(\theta^2\alpha N)]$,
	\begin{align*}
	\mathbb{E}\phi(2\theta^{-1}C_{K,r}^{-1} | G_f(W_k)|) & \leq \mathbb{P} \bigg( |G_f(W_k)| \geq \frac{\theta C_{K,r}}{2} \bigg) \leq \frac{4}{\theta^2C_{K,r}^2} \mathbb{E}G_f(W_k)^2  = \frac{4}{\theta^2 C_{K,r}^2} \mathbb{V}ar (P_{B_k}\cL_f) \\
	&  \leq  \frac{4K^2}{\theta^2C_{K,r}^2N^2} \sum_{i \in B_k} \mathbb{E} [\cL_f^2(X_i,Y_i)] \leq \frac{4L^2K}{\theta^2C_{K,r}^2N}\|f-f^{*}\|^2_{L_2 } \leq \alpha\enspace.
	\end{align*}
	Therefore, 
	\begin{align}\label{res::1}
	z(f) \geq |\mathcal{K}|(1-\alpha) -\sup_{f \in \cF} \sum_{k \in \mathcal{K}} \bigg( \phi(2\theta^{-1}C_{K,r}^{-1} | G_f(W_k)|) - \mathbb{E} \phi(2\theta^{-1}C_{K,r}^{-1} | G_f(W_k)|) \bigg) \enspace.
	\end{align}
	Using Mc Diarmid's inequality \cite[Theorem~6.2]{MR3185193}, for all $x>0$, with probability larger than $1-\exp(-x^2 |\cK| /2)$, 
	\begin{align*}
	\sup_{f \in \cF } & \sum_{k \in \mathcal{K}} \bigg( \phi(2\theta^{-1}C_{K,r}^{-1} | G_f(W_k)|)  - \mathbb{E} \phi(2\theta^{-1}C_{K,r}^{-1} | G_f(W_k)|) \bigg)   \\
	& \leq  x|\mathcal{K}| + \mathbb{E} \sup_{f \in \cF }   \sum_{k \in \mathcal{K}} \bigg( \phi(2\theta^{-1}C_{K,r}^{-1} | G_f(W_k)|) - \mathbb{E}  \phi(2\theta^{-1}\theta^{-1}C_{K,r}^{-1} | G_f(W_k)|) \bigg)\enspace.
	\end{align*}
	Let $\eps_1, \ldots, \eps_K$ denote independent Rademacher variables independent of the $(X_i, Y_i), i\in\cI$. By Gin{\'e}-Zinn symmetrization argument,
\begin{multline*}
	\sup_{f \in \cF }  \sum_{k \in \mathcal{K}} \bigg( \phi(2\theta^{-1}C_{K,r}^{-1} | G_f(W_k)|)  - \mathbb{E} \phi(2\theta^{-1}C_{K,r}^{-1} | G_f(W_k)|) \bigg)   \\
	\leq x|\mathcal{K}| + 2 \mathbb{E} \sup_{f \in \cF }   \sum_{k \in \mathcal{K}} \eps_k \phi(2\theta^{-1}C_{K,r}^{-1} | G_f(W_k)|) 
	\end{multline*}
As $\phi$ is 1-Lipschitz with $\phi(0)=0$, using the contraction lemma \cite[Chapter~4]{ledoux2013probability}, 
	\begin{align*}
\mathbb{E} \sup_{f \in \cF }   \sum_{k \in \mathcal{K}} \eps_k \phi(2\theta^{-1}C_{K,r}^{-1} | G_f(W_k)|)\leq 	2\mathbb{E}  \sup_{f \in \cF }    \sum_{k \in \mathcal{K}}  \eps_k \frac{ G_f(W_k)}{\theta C_{K,r}}   =  2\mathbb{E} \sup_{f \in \cF } \sum_{k \in \mathcal{K}}  \eps_k \frac{(P_{B_k}- P)\cL_f}{\theta C_{K,r}}. 
	\end{align*}
	Let $(\sigma_i: i \in \cup_{k\in\cK}B_k)$ be a family of independent Rademacher variables independent of $(\eps_k)_{k \in \mathcal{K}}$ and $(X_i, Y_i)_{i \in \cI}$. It follows from the Gin{\'e}-Zinn symmetrization argument that
\begin{equation*}
\mathbb{E} \sup_{f \in \cF } \sum_{k \in \mathcal{K}}  \eps_k \frac{(P_{B_k}- P)\cL_f}{ C_{K,r}}\leq 2 \mathbb{E} \sup_{f \in \cF } \frac{K}{N}\sum_{i \in \cup_{k\in\cK}B_k }  \sigma_i \frac{\cL_f(X_i,Y_i)}{ C_{K,r}}.
\end{equation*}
By the Lipschitz property of the loss, the contraction principle applies and
	\begin{align*}
	  \mathbb{E} \sup_{f \in \cF } \sum_{i \in \cup_{k\in\cK}B_k }  \sigma_i \frac{\cL_f(X_i,Y_i)}{ C_{K,r}} \leq  L\mathbb{E} \sup_{f \in \cF} \sum_{i \in \cup_{k \in \mathcal{K}} B_k}  \sigma_i \frac{(f-f^{*})(X_i)}{C_{K,r}}\enspace.
	\end{align*}
%	where we recall that $C_{K,r} = \max([4L^2K]/[\theta^2 \alpha  N], r_2^2(\gamma) )$ and $\cF = F \cap \left(f^{*} + \sqrt{C_{K,r}}B_{L_2}\right)$. 
To bound from above the right-hand side in the last inequality, consider two cases 1) $C_{K,r}= \tilde{r}_2^2(\gamma)$ or 2) $C_{K,r} = 4L^2K/(\alpha\theta^2 N)$. In the first case, by definition of the complexity parameter $\tilde{r}_2(\gamma)$ in \eqref{comp:par},
	\begin{align*}
	 \mathbb{E} \sup_{f \in \cF }   \sum_{i \in \cup_{k \in \mathcal{K}} B_k}  \sigma_i \frac{(f-f^{*})(X_i)}{C_{K,r}} 
	  =  \mathbb{E} \sup_{f \in F: \|f-f^{*}\|_{L_2 } \leq \tilde r_2(\gamma) } \frac{1}{\tilde{r}_2^2(\gamma)} \bigg| \sum_{i \in \cup_{k \in \mathcal{K}} B_k}  \sigma_i  (f-f^{*})(X_i)\bigg| \leq \frac{\gamma |\cK| N}{K}.
	\end{align*}
	In the second case, 
	\begin{align*}
	&\mathbb{E} \sup_{f \in \cF } \sum_{i \in \cup_{k \in \mathcal{K}} B_k}   \frac{\sigma_i(f-f^{*})(X_i)}{C_{K,r}} \\
	& \leq  	\mathbb{E}  \bigg[  \sup_{\substack{f \in F:\\ \|f-f^{*}\|_{L_2 } \leq \tilde{r}_2(\gamma)}}  \bigg| \sum_{i \in \cup_{k \in \mathcal{K}} B_k}  \frac{\sigma_i (f-f^{*})(X_i)}{\tilde{r}_2^2(\gamma)} \bigg|  \vee  \sup_{\substack{f \in F:\\  \tilde{r}_2(\gamma)  \leq \|f-f^{*}\|_{L_2 } \leq \sqrt{\frac{4L^2K}{\alpha \theta^2 N}}} }  \bigg| \sum_{i \in \cup_{k \in \mathcal{K}} B_k}  \sigma_i \frac{(f-f^{*})(X_i)}{\frac{4L^2K}{\alpha \theta^2 N}} \bigg|   \bigg] \enspace.
	\end{align*}
Let $f\in F$ be such that $\tilde{r}_2(\gamma) \leq \norm{f-f^*}_{L_2 }\leq \sqrt{[4L^2K]/[\alpha\theta^2 N]}$; by convexity of $F$, there exists $f_0\in F$ such that $\norm{f_0-f^*}_{L_2 } = \tilde{r}_2(\gamma)$ and $f = f^*+\alpha(f_0-f^*)$ with $\alpha = \norm{f-f^*}_{L_2 }/\tilde{r}_2(\gamma)\geq 1$. Therefore,
	\begin{align*}
	\bigg| \sum_{i \in \cup_{k \in \mathcal{K}} B_k}  \sigma_i \frac{(f-f^{*})(X_i)}{\frac{4L^2K}{\alpha \theta^2 N}} \bigg|  \leq 	\frac{1}{\tilde{r}_2(\gamma ) }  \bigg| \sum_{i \in \cup_{k \in \mathcal{K}} B_k}  \sigma_i \frac{(f-f^{*})(X_i)}{\|f-f^{*}\|_{L_2 }} \bigg|  = \frac{1}{\tilde{r}_2^2(\gamma ) }  \bigg| \sum_{i \in \cup_{k \in \mathcal{K}} B_k}  \sigma_i (f_0-f^{*})(X_i) \bigg| 
	\end{align*} and so 
\begin{equation*}
\sup_{\substack{f \in F:\\  \tilde{r}_2(\gamma)  \leq \|f-f^{*}\|_{L_2 } \leq \sqrt{\frac{4L^2K}{\alpha \theta^2 N}} }}   \bigg| \sum_{i \in \cup_{k \in \mathcal{K}} B_k}  \sigma_i \frac{(f-f^{*})(X_i)}{\frac{4L^2K}{\alpha \theta^2 N}} \bigg| \leq \frac{1}{\tilde{r}_2^2(\gamma ) } \sup_{\substack{f \in F:\\  \|f-f^{*}\|_{L_2 } = \tilde{r}_2(\gamma) } } \bigg| \sum_{i \in \cup_{k \in \mathcal{K}} B_k}  \sigma_i (f-f^{*})(X_i) \bigg|.
\end{equation*}
By definition of $\tilde{r}_2(\gamma)$, it follows that
	\begin{align*}
		\mathbb{E} \sup_{f \in \cF } \bigg| & \sum_{i \in \cup_{k \in \mathcal{K}} B_k}  \sigma_i \frac{(f-f^{*})(X_i)}{ C_{K,r}} \bigg|  \leq \frac{\gamma |\cK| N}{K}.
	\end{align*}
	Therefore, as $|\mathcal{K}| \geq K-|\mathcal{O}| \geq \beta K$, with probability larger than $1-\exp(-x^2 \beta K/2)$, for all $f\in F$ such that $\norm{f-f^*}_{L_2 }\leq \sqrt{C_{K,r}}$,
	\begin{align} \label{choice_constants}
	z(f) \geq  |\mathcal{K}|\left(1-\alpha - x - \frac{8 \gamma L}{\theta}\right) > \frac{K}{2}.
	\end{align}
\end{proof}

\subsubsection{End of the proof of Theorem~\ref{theo:main}} % (fold)
\label{sub:end_of_the_proof_of_theorem_theo:main}
Theorem~\ref{theo:main} follows from Lemmas~\ref{lem:lem1_MOM}, \ref{lem:lem2_MOM}, \ref{lem:lem3_MOM} and Proposition~\ref{prop:sto_MOM} for the choice of constant
\begin{gather*}
 \theta = 1/(3A)  \quad \alpha = 1/24, \quad x = 1/24 ,\quad  \beta = 4/7 \mbox{ and } \gamma = 1/(575 AL).
\end{gather*}

\subsection{Proof of Theorem~\ref{theo:MOM_lepski}} % (fold)
\label{sub:proof_of_theorem_theo:mom_lepski}
Let $K \in \big[ 7|\cO|/3 ,  N \big]$ and consider the event $\Omega_K$ defined in \eqref{eq:event_omegaK}. 
It follows from the proof of Lemmas~\ref{lem:lem1_MOM} and~\ref{lem:lem2_MOM} that $T_K(f^*)\leq \theta C_{K,r}$ on $\Omega_K$. 
Setting $\theta=1/(3A)$, on $\cap_{J=K}^N \Omega_J$, $f^*\in \hat R_J$ for all $J=K,\ldots, N$, so $\cap_{J=K}^N \hat R_J\neq \emptyset$. 
By definition of $\hat K$, it follows that $\hat K\leq K$ and by definition of $\tilde f$, $\tilde f \in \hat R_K$ which means that $T_K(\tilde f)\leq \theta C_{K,r}$. 
It is proved in Lemmas~\ref{lem:lem1_MOM} and~\ref{lem:lem2_MOM} that on $\Omega_K$, if $f\in F$ satisfies $\norm{f-f^*}_{L_2 }\geq \sqrt{C_{K,r}}$ then $T_K(f)> \theta C_{K,r}$. 
Therefore, $\norm{\tilde f-f^*}_{L_2 }\leq \sqrt{C_{K,r}}$. 
On $\Omega_K$, since $\norm{\tilde f-f^*}_{L_2 }\leq \sqrt{C_{K,r}}$, $P\cL_{\tilde f}\leq 2 \theta C_{K,r}$. Hence, on $\cap_{J=K}^N \Omega_J$,  the conclusions of Theorem~\ref{theo:MOM_lepski} hold. 
Finally, by Proposition~\ref{prop:sto_MOM}, 
\begin{equation*}
\bP\left[\cap_{J=K}^N \Omega_J\right]\geq 1-\sum_{J=K}^N \exp(-K/2016)\geq 1-4 \exp(-K/2016).
\end{equation*}

% subsection proof_of_theorem_theo:mom_lepski (end)

\subsection{Proof of Theorem~\ref{theo:main_without_bernstein_cond}} % (fold)
\label{ssub:proof_of_theorem_theo:main_without_bernstein_cond}
The proof of Theorem~\ref{theo:main_without_bernstein_cond} follows the same path as the one of Theorem~\ref{theo:main}. We only sketch the different arguments needed because of the localization by the excess loss and the lack of Bernstein condition. 

Define the event $\Omega_K^\prime$ in the same way as $\Omega_K$ in \eqref{eq:event_omegaK} where $C_{K,r}$ is replaced by $\bar{r}_2^2(\gamma)$
% \begin{equation}\label{eq:sharp_oracle_without_assumption}
% C_{K,r}^\prime = \max\left( \bar{r}_2^2(\gamma), \sqrt{\frac{4K}{\theta^2\alpha N}}\ \sup_{f\in F,P\cL_f\leq \bar{r}_2(\gamma)^2, i\in\cI } \sqrt{\mathbb{V}ar_{P_i}(\cL_f)}\right)
% \end{equation}
% \begin{Remark}
% 	With our choice of constants we will have $4/(\theta^2\alpha) = 1536$ and thus $C_{K,r}' =  \bar{r}_2^2(\gamma)$ by definition of $V_K(\gamma)$.
% \end{Remark}
and the $L_2$ localization is replaced by the ``excess loss localization'':
\begin{equation}\label{eq:event_omegaK_prime} 
\Omega^\prime_K = \bigg \{ \forall f \in (\cL_F)_{\bar{r}_2^2(\gamma)}, \exists J\subset\{1,\ldots,K\}: |J|>K/2 \mbox{ and } \forall k\in J, \left| (P_{B_k} - P)\cL_f \right| \leq (1/4) \bar{r}_2^2(\gamma) \bigg \}
\end{equation}where $(\cL_F)_{\bar{r}_2^2(\gamma)} =\{f\in F: P\cL_{f}\leq \bar{r}_2^2(\gamma)\}$. Our first goal is to show that on the event $\Omega_K^\prime$, $P\cL_{\hat f}\leq (1/4) \bar{r}_2^2(\gamma)$. We will then handle  $\bP[\Omega_K^\prime]$.

\begin{Lemma}\label{lem:convexity_of_localized_ball}
Grant Assumptions~\ref{assum:lip_conv} and~\ref{assum:convex}. For every $r\geq0$, the set $(\cL_F)_r:=\{f\in F:P\cL_f\leq r\}$ is convex and relatively closed to $F$ in $L_1(\mu)$. Moreover, if $f\in F$ is such that $P\cL_f>r$ then there exists $f_0\in F$ and $(P\cL_f/r)\geq \alpha>1$ such that $(f-f^*)=\alpha (f_0-f^*)$ and $P\cL_{f_0} = r$.
\end{Lemma}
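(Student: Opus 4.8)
The plan is to reduce both assertions to two elementary properties of the functional $\Phi:f\in F\mapsto P\cL_f$: that it is convex, and that it is $L$-Lipschitz (hence continuous) for the $L_1(\mu)$ norm. Note first that $\Phi$ is well defined even when $Y\notin L_1$, since $|\cL_f|\leq L|f-f^*|$ by the Lipschitz part of Assumption~\ref{assum:lip_conv}. For convexity, since $P\cL_f = R(f)-R(f^*)$ and $R(f^*)$ is constant, it is enough to show $R(f)=P\ell_f$ is convex on $F$. Given $f_1,f_2\in F$ and $\lambda\in[0,1]$, Assumption~\ref{assum:convex} gives $\lambda f_1+(1-\lambda)f_2\in F$, and the pointwise convexity of $\bar{\ell}(\cdot,y)$ yields $\bar{\ell}(\lambda f_1(x)+(1-\lambda)f_2(x),y)\leq \lambda\bar{\ell}(f_1(x),y)+(1-\lambda)\bar{\ell}(f_2(x),y)$; integrating against $P$ shows $R$, and hence $\Phi$, is convex. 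Consequently $(\cL_F)_r$ is a sublevel set of a convex function on the convex set $F$, so it is convex.

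For relative closedness I would use the Lipschitz part of Assumption~\ref{assum:lip_conv}: the bound $|\cL_{f_1}(x,y)-\cL_{f_2}(x,y)|=|\bar{\ell}(f_1(x),y)-\bar{\ell}(f_2(x),y)|\leq L|f_1(x)-f_2(x)|$ integrates to $|\Phi(f_1)-\Phi(f_2)|\leq L\norm{f_1-f_2}_{L_1(\mu)}$. Thus $\Phi$ is $L$-Lipschitz on $(F,\norm{\cdot}_{L_1(\mu)})$, and $(\cL_F)_r=\Phi^{-1}((-\infty,r])$ is the preimage of a closed set, hence relatively closed in $F$. This disposes of the first assertion.

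For the radial statement (taking $r>0$, as the ratio $P\cL_f/r$ requires), the idea is a one-dimensional intermediate value argument along the segment joining $f^*$ to $f$. I would define $g:[0,1]\to\R$ by $g(t)=\Phi(f^*+t(f-f^*))$. By convexity of $F$ each $f^*+t(f-f^*)\in F$, and by the two properties above $g$ is convex and Lipschitz, with $g(0)=P\cL_{f^*}=0$ and $g(1)=P\cL_f>r$. The intermediate value theorem then yields $t^*\in(0,1)$ with $g(t^*)=r$; setting $f_0=f^*+t^*(f-f^*)\in F$ gives $P\cL_{f_0}=r$ and $f-f^*=\alpha(f_0-f^*)$ with $\alpha=1/t^*>1$. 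Finally, convexity of $g$ together with $g(0)=0$ gives $r=g(t^*)=g\big(t^*\cdot 1+(1-t^*)\cdot 0\big)\leq t^*g(1)=t^* P\cL_f$, so that $\alpha=1/t^*\leq P\cL_f/r$, which is exactly the claimed bound $P\cL_f/r\geq\alpha>1$.

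I expect no serious obstacle, as every step is elementary. The one point deserving care is the continuity of $g$ required for the intermediate value theorem, since a convex function on $[0,1]$ can in principle fail to be continuous at the endpoints; this is settled cleanly by the $L_1(\mu)$-Lipschitz bound, which gives $|g(s)-g(t)|\leq L|s-t|\,\norm{f-f^*}_{L_1(\mu)}$ and hence genuine continuity on all of $[0,1]$.
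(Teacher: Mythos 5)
Your proof is correct, and its first two claims are established exactly as in the paper: the pointwise convexity of $u\mapsto\bar\ell(u,y)$ gives convexity of $f\mapsto P\cL_f$ and hence of its sublevel sets, and the $L_1(\mu)$-Lipschitz bound $|P\cL_{f_1}-P\cL_{f_2}|\leq L\|f_1-f_2\|_{L_1(\mu)}$ gives relative closedness (your displayed inequality is in fact the corrected form of the paper's, which contains a typo, $\|f-f^*\|_{L_1(\mu)}$ where $L\|f-g\|_{L_1(\mu)}$ is meant). Where you genuinely diverge is the radial claim. The paper sets $\alpha_0=\sup\{\alpha\geq 0:\ f^*+\alpha(f-f^*)\in(\cL_F)_r\}$, uses the relative closedness of $(\cL_F)_r$ to see the supremum is attained, convexity of the set to get $\alpha_0<1$, and a maximality argument to get $P\cL_{f_0}=r$; you instead restrict the functional to the segment, obtaining a convex Lipschitz function $g$ on $[0,1]$ with $g(0)=0<r<g(1)$, and apply the intermediate value theorem. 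The two mechanisms are equivalent --- the paper's maximality step secretly uses the same continuity along the ray --- but yours is more self-contained, since the radial claim no longer routes through the set-level closedness statement, and the quantitative bound $\alpha\leq P\cL_f/r$ falls out of the same one-line convexity inequality ($g(t^*)\leq t^*g(1)$ versus the paper's $P\cL_{f^*+\alpha(f-f^*)}\leq\alpha P\cL_f$). One small caution: your convexity step passes through $R(f)=P\ell_f$, which need not be defined when $Y\notin L_1$, a case the lemma (stated under Assumptions~\ref{assum:lip_conv} and~\ref{assum:convex} only) must cover, as you yourself note at the outset; the clean fix is to subtract $\ell_{f^*}(x,y)$ from both sides of the pointwise convexity inequality \emph{before} integrating, i.e., to prove convexity of $f\mapsto P\cL_f$ directly, which is exactly what the paper does.
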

\begin{proof}
Let $f$ and $g$ be in $(\cL_F)_r$ and $0\leq \alpha\leq1$. We have $\alpha f + (1-\alpha)g\in F$ because $F$ is convex and for all $x\in\cX$ and $y\in\R$, using the convexity of $u\to \bar\ell(u+f^*(x), y)$, we have
\begin{align*}
&\ell_{\alpha f + (1-\alpha)g}(x,y) - \ell_{f^*}(x,y) = \bar\ell(\alpha (f-f^*)(x) + (1-\alpha)(g-f^*)(x) + f^*(x), y) - \bar\ell(f^*(x),y)\\
&\leq \alpha \big(\bar\ell((f-f^*)(x)+ f^*(x), y) - \bar\ell(f^*(x),y)\big) + (1-\alpha)\big(\bar\ell((g-f^*)(x) + f^*(x), y) - \bar\ell(f^*(x),y)\big)\\
&=\alpha(\ell_f - \ell_{f^*}) + (1-\alpha)(\ell_g-\ell_{f^*})
\end{align*}and so $P\cL_{\alpha f + (1-\alpha)g}\leq \alpha P\cL_f + (1-\alpha)P\cL_g$. Given that $P\cL_f, P\cL_g\leq r$ we also have $P\cL_{\alpha f + (1-\alpha)g}\leq r$. Therefore, $\alpha f + (1-\alpha)g\in (\cL_F)_r$ and $(\cL_F)_r$ is convex. 

For all $f,g\in F$, $|P\cL_f - P\cL_g|\leq \norm{f-f^*}_{L_1(\mu)}$ so that $f\in F\to P\cL_f$ is  continuous onto $F$ in $L_1(\mu)$ and therefore its level sets, such as $(\cL_F)_r$, are relatively closed to $F$ in $L_1(\mu)$.

Finally, let $f\in F$ be such that $P\cL_f >r$. Define $\alpha_0 = \sup\{\alpha\geq0: f^*+\alpha(f-f^*)\in(\cL_F)_r\}$. Note that $P\cL_{f^*+\alpha(f-f^*)}\leq \alpha P\cL_f= r$ for $\alpha = r/P\cL_f$ so that $\alpha_0\geq r/P\cL_f$.  Since $(\cL_F)_r$ is relatively closed to $F$ in $L_1(\mu)$, we have $f^*+\alpha_0(f-f^*)\in(\cL_F)_r$ and in particular $\alpha_0<1$ otherwise, by convexity of $(\cL_F)_r$, we would have $f\in (\cL_F)_r$. Moreover, by maximality of $\alpha_0$,  $f_0 = f^*+\alpha_0(f-f^*)$ is such that $P\cL_{f_0}=r$ and the results follows for $\alpha = \alpha_0^{-1}$.
\end{proof}

\begin{Lemma}\label{lem:determinist_part_1_without_ass}
Grant Assumptions~\ref{assum:lip_conv} and~\ref{assum:convex}.  On the event $\Omega_K^\prime$, $P\cL_{\hat f}\leq  \bar{r}_2^2(\gamma)$.
\end{Lemma}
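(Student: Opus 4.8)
The plan is to mirror the three-step argument used in the proof of Theorem~\ref{theo:main} (Lemmas~\ref{lem:lem1_MOM}--\ref{lem:lem3_MOM}), but with the $L_2$-ball $f^*+\sqrt{C_{K,r}}B_{L_2}$ replaced everywhere by the excess-risk level set $(\cL_F)_{\bar r_2^2(\gamma)}=\{f\in F:P\cL_f\leq \bar r_2^2(\gamma)\}$, and with the Bernstein-based lower bound of Lemma~\ref{lem:lem2_MOM} replaced by the fact that on the boundary of this level set one has $P\cL_{f_0}=\bar r_2^2(\gamma)$ by construction. Throughout I fix $\eta=(1/4)\bar r_2^2(\gamma)$. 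First I would record that the abstract selection argument of Lemma~\ref{lem:lem1_MOM} applies verbatim with $(\cL_F)_{\bar r_2^2(\gamma)}$ in place of the $L_2$-ball: if
\begin{equation*}
\sup_{f\in F\setminus (\cL_F)_{\bar r_2^2(\gamma)}}\MOM{K}{\ell_{f^*}-\ell_f}<-\eta\quad\text{and}\quad \sup_{f\in (\cL_F)_{\bar r_2^2(\gamma)}}\MOM{K}{\ell_{f^*}-\ell_f}\leq \eta,
\end{equation*}
then $T_K(f^*)\leq\eta$ while every $f\notin (\cL_F)_{\bar r_2^2(\gamma)}$ has $T_K(f)\geq \MOM{K}{\ell_f-\ell_{f^*}}>\eta$, so the minimiser $\hat f$ of $T_K$ must satisfy $P\cL_{\hat f}\leq \bar r_2^2(\gamma)$. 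It then remains to verify the two displayed inequalities on $\Omega_K^\prime$.

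The ``inside'' inequality is the easy half. For $f\in (\cL_F)_{\bar r_2^2(\gamma)}$, the event $\Omega_K^\prime$ provides strictly more than $K/2$ blocks on which $|(P_{B_k}-P)\cL_f|\leq (1/4)\bar r_2^2(\gamma)$; on each such block $-P_{B_k}\cL_f\leq -P\cL_f+(1/4)\bar r_2^2(\gamma)\leq (1/4)\bar r_2^2(\gamma)$ since $P\cL_f\geq 0$, so more than half the values $P_{B_k}(\ell_{f^*}-\ell_f)$ lie below $\eta$ and the median obeys $\MOM{K}{\ell_{f^*}-\ell_f}\leq \eta$.

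The ``outside'' inequality is where the new ingredient enters and is the main obstacle. Given $f$ with $P\cL_f>\bar r_2^2(\gamma)$, I would invoke Lemma~\ref{lem:convexity_of_localized_ball} to produce $f_0\in F$ with $P\cL_{f_0}=\bar r_2^2(\gamma)$ and $\alpha>1$ with $f-f^*=\alpha(f_0-f^*)$; this replaces the convexity-of-$F$ radial decomposition onto the $L_2$-sphere, and its availability is precisely why the convexity and relative closedness of the level set were established in Lemma~\ref{lem:convexity_of_localized_ball}. The crucial point is that the homogeneity estimate is insensitive to the choice of localisation: the one-dimensional convex functions $\psi_i(u)=\overline{\ell}(u+f^*(X_i),Y_i)-\overline{\ell}(f^*(X_i),Y_i)$ satisfy $\psi_i(0)=0$ and hence $\alpha\psi_i(u)\leq\psi_i(\alpha u)$, which yields $P_{B_k}\cL_f\geq\alpha P_{B_k}\cL_{f_0}$ on every block exactly as in~\eqref{conv_arg_MOM}. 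Applying $\Omega_K^\prime$ to $f_0$ (which lies in $(\cL_F)_{\bar r_2^2(\gamma)}$) produces more than $K/2$ blocks with $P_{B_k}\cL_{f_0}\geq P\cL_{f_0}-(1/4)\bar r_2^2(\gamma)=(3/4)\bar r_2^2(\gamma)$, whence $P_{B_k}\cL_f\geq\alpha(3/4)\bar r_2^2(\gamma)>(3/4)\bar r_2^2(\gamma)$ on those blocks; thus more than half the values $P_{B_k}(\ell_{f^*}-\ell_f)$ fall below $-(3/4)\bar r_2^2(\gamma)<-\eta$, giving the strict bound.

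With both inequalities verified on $\Omega_K^\prime$, the selection argument of the first paragraph yields $P\cL_{\hat f}\leq\bar r_2^2(\gamma)$, completing the proof. The only genuinely non-routine step is the radial decomposition onto the excess-risk boundary, which is why it is isolated in Lemma~\ref{lem:convexity_of_localized_ball}; everything else is a transcription of the Bernstein-free specialisation of the argument behind Theorem~\ref{theo:main}, using only that $f^*$ minimises the risk (so $P\cL_f\geq 0$) and the convexity of the loss.
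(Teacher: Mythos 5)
Your proposal is correct and follows essentially the same route as the paper's own proof: both replace the $L_2$-sphere decomposition by Lemma~\ref{lem:convexity_of_localized_ball} to write $f-f^*=\alpha(f_0-f^*)$ with $P\cL_{f_0}=\bar r_2^2(\gamma)$, use the homogeneity bound \eqref{conv_arg_MOM} to transfer the block-wise lower bound $P_{B_k}\cL_{f_0}\geq (3/4)\bar r_2^2(\gamma)$ from $f_0$ to $f$, handle functions inside the level set via $P\cL_f\geq 0$, and conclude through the minmax inequality $\MOM{K}{\ell_{\hat f}-\ell_{f^*}}\leq T_K(\hat f)\leq T_K(f^*)\leq (1/4)\bar r_2^2(\gamma)$. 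The only difference is presentational (you package the selection step as an explicit analogue of Lemma~\ref{lem:lem1_MOM}, whereas the paper argues it inline), so no further comment is needed.
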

\begin{proof}
Let $f\in F$ be such that $P\cL_f > \bar{r}_2^2(\gamma)$. It follows from Lemma~\ref{lem:convexity_of_localized_ball} that there exists $\alpha\geq 1$ and $f_0\in F$ such that $P\cL_{f_0} = \bar{r}_2^2(\gamma)$ and $f-f^* = \alpha(f_0-f^*)$. According to \eqref{conv_arg_MOM}, we have for every $k\in\{1, \ldots, K\}$, $P_{B_k} \cL_f\geq \alpha P_{B_k}\cL_{f_0}$. Since $f_0\in (\cL_F)_{\bar{r}_2^2(\gamma)}$, on the event $\Omega_K^\prime$, there are strictly more than $K/2$ blocks $B_k$ such that $P_{B_k}\cL_{f_0}\geq P\cL_{f_0}- (1/4) \bar{r}_2^2(\gamma) = (3/4)\bar{r}_2^2(\gamma)$ and so $P_{B_k}\cL_{f}\geq (3/4)\bar{r}_2^2(\gamma)$. As a consequence, we have
\begin{equation}\label{eq:first_ineq_2}
\sup_{f \in  F \backslash (\cL_F)_{\bar{r}_2^2(\gamma)}}  \hspace{0.2cm} \MOM{K}{\ell_{f^{*}}-\ell_f} \leq (-3/4)  \bar{r}_2^2(\gamma)\enspace.
\end{equation}Moreover, on the event $\Omega_K^\prime$, for all $f\in (\cL_F)_{\bar{r}_2^2(\gamma)}$, there are strictly more than $K/2$ blocks $B_k$ such that $P_{B_k}(-\cL_f)\leq (1/4) \bar{r}_2^2(\gamma) - P\cL_f\leq (1/4) \bar{r}_2^2(\gamma)$. Therefore, 
\begin{equation}\label{eq:second_ineq_2}
\sup_{f\in(\cL_F)_{\bar{r}_2^2(\gamma)}}\MOM{K}{\ell_{f^{*}}-\ell_f} \leq (1/4) \bar{r}_2^2(\gamma) \enspace.
\end{equation}We conclude from \eqref{eq:first_ineq_2} and \eqref{eq:second_ineq_2} that
$\sup_{f\in F} \MOM{K}{\ell_{f^{*}}-\ell_f} \leq (1/4) \bar{r}_2^2(\gamma)$ and that every $f\in F$ such that $P\cL_{f}> \bar{r}_2^2(\gamma)$ satisfies  $\MOM{K}{\ell_{f}-\ell_{f^*}}\geq (3/4)\bar{r}_2^2(\gamma)$. But, by definition of $\hat f$, we have
\begin{equation*}
 \MOM{K}{\ell_{\hat f}-\ell_{f^*}}\leq \sup_{f\in F} \MOM{K}{\ell_{f^{*}}-\ell_f} \leq (1/4) \bar{r}_2^2(\gamma) \enspace.
 \end{equation*} Therefore, we necessarily have $P\cL_{\hat f}\leq \bar{r}_2^2(\gamma)$.
 \end{proof}

Now, we prove that $\Omega_K^\prime$ is an exponentially large event using similar argument as in Proposition~\ref{prop:sto_MOM}.

\begin{Proposition}\label{prop:sto_MOM_2}
	Grant Assumptions~\ref{assum:lip_conv},~\ref{assum:convex} and~\ref{assum:moments2} and assume that $(1-\beta)K\geq |\cO|$ and $\beta(1-1/12-32\gamma L)>1/2$. Then $\Omega_K^\prime$ holds with probability larger than $1-\exp(-\beta K/1152)$.
\end{Proposition}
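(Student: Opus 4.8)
The plan is to follow the proof of Proposition~\ref{prop:sto_MOM} almost verbatim, the only genuine changes being that the neighbourhood of $f^*$ is now the excess-loss ball and that the variance of the blockwise empirical process is controlled \emph{directly} by the term $V_K$ appearing in~\eqref{eq:comp_par_local_excess_loss} instead of through the $L_2$-geometry. Writing $\cF=(\cL_F)_{\bar r_2^2(\gamma)}$, $W_k=(X_i,Y_i)_{i\in B_k}$, $G_f(W_k)=(P_{B_k}-P)\cL_f$ and
\[
z(f)=\sum_{k=1}^K I\{|G_f(W_k)|\le (1/4)\bar r_2^2(\gamma)\},
\]
the event $\Omega_K^\prime$ of~\eqref{eq:event_omegaK_prime} holds as soon as $z(f)>K/2$ for every $f\in\cF$, so it suffices to bound $\inf_{f\in\cF}z(f)$ from below. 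As before I would discard the corrupted blocks, keeping $\cK=\{k:B_k\subset\cI\}$ (so $|\cK|\ge K-|\cO|\ge\beta K$), introduce the $1$-Lipschitz interpolation $\phi(t)=I\{t\ge2\}+(t-1)I\{1\le t\le2\}$ evaluated at $8\bar r_2^{-2}(\gamma)|G_f(W_k)|$, and reduce to
\[
z(f)\ge |\cK|-\sum_{k\in\cK}\E\phi-\sup_{f\in\cF}\sum_{k\in\cK}\bigl(\phi-\E\phi\bigr).
\]

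The first step is the control of the mean term. By the choice of $\phi$ and Chebyshev's inequality, $\E\phi\le \bP(|G_f(W_k)|\ge\bar r_2^2(\gamma)/8)\le 64\,\mathbb{V}ar(P_{B_k}\cL_f)/\bar r_2^4(\gamma)$. Since the inliers are independent (Assumption~\ref{assum:moments2}) and $|B_k|=N/K$, the blockwise variance is at most $(K/N)\max_{i\in\cI}\mathbb{V}ar_{P_i}(\cL_f)\le V_K^2(\bar r_2(\gamma))$ for $f\in\cF$, and the defining inequality $\sqrt{1536}\,V_K(\bar r_2(\gamma))\le\bar r_2^2(\gamma)$ then gives $\E\phi\le 64/1536=1/24$. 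This is the step I expect to be the crux of the argument: in Proposition~\ref{prop:sto_MOM} the variance was tamed by bounding $\|f-f^*\|_{L_2}^2$ inside the localization ball, but without a Bernstein condition the $L_2$-diameter of $\cF$ is uncontrolled, so the variance must be carried directly in the complexity parameter --- precisely the reason $V_K$ enters the definition of $\bar r_2$.

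For the fluctuation term the plan is McDiarmid's inequality (\cite[Theorem~6.2]{MR3185193}) with deviation $x=1/24$, giving, with probability at least $1-\exp(-x^2|\cK|/2)$,
\[
\sup_{f\in\cF}\sum_{k\in\cK}\bigl(\phi-\E\phi\bigr)\le \frac{|\cK|}{24}+\E\sup_{f\in\cF}\sum_{k\in\cK}\bigl(\phi-\E\phi\bigr),
\]
followed by two Gin\'e--Zinn symmetrizations (over blocks, then over data) and the contraction principle (\cite[Chapter~4]{ledoux2013probability}) applied to the $1$-Lipschitz $\phi$ and to the $L$-Lipschitz loss. The accumulated factors of $2$ reduce the last expectation to $32L\bar r_2^{-2}(\gamma)(K/N)\,\E\sup_{f\in\cF}\bigl|\sum_{i\in J}\sigma_i(f-f^*)(X_i)\bigr|$ with $J=\cup_{k\in\cK}B_k$. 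Here the standing hypothesis $\beta>1/2$ ensures $|J|=|\cK|(N/K)\ge\beta N>N/2$, so $J$ is admissible in the supremum defining $E$ in~\eqref{eq:comp_par_local_excess_loss} and the bound $E(\bar r_2(\gamma))\le\gamma\bar r_2^2(\gamma)$ applies, yielding $\E\sup_{f\in\cF}\sum_{k\in\cK}(\phi-\E\phi)\le 32\gamma L|\cK|$. Combining the three contributions gives $z(f)\ge|\cK|(1-1/12-32\gamma L)\ge\beta K(1-1/12-32\gamma L)>K/2$ by the assumption $\beta(1-1/12-32\gamma L)>1/2$; and since $|\cK|\ge\beta K$ the McDiarmid probability is at least $1-\exp(-\beta K/1152)$, which is the claimed estimate.
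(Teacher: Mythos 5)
Your proposal is correct and follows essentially the same route as the paper's own (sketched) proof: same excess-loss localization $\cF'=(\cL_F)_{\bar r_2^2(\gamma)}$, same contraction $\phi$ with threshold $(1/4)\bar r_2^2(\gamma)$, the same Chebyshev bound on the mean term via $V_K$ and the defining inequality $\sqrt{1536}\,V_K(\bar r_2(\gamma))\leq \bar r_2^2(\gamma)$ giving $64/1536=1/24$, and the same McDiarmid ($x=1/24$) plus double Gin\'e--Zinn symmetrization and contraction chain leading to $z(f)\geq |\cK|(1-1/12-32\gamma L)>K/2$ with probability at least $1-\exp(-\beta K/1152)$. You even make explicit a point the paper leaves implicit, namely that $|J|=|\cK|N/K\geq \beta N>N/2$ (forced by the hypothesis $\beta(1-1/12-32\gamma L)>1/2$) is what legitimizes invoking the bound $E(\bar r_2(\gamma))\leq\gamma\bar r_2^2(\gamma)$.
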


\textit{Sketch of proof.} The proof of Proposition~\ref{prop:sto_MOM_2} follows the same line as the one of Proposition~\ref{prop:sto_MOM}. Let us precise the main differences. We set $\cF^\prime = (\cL_F)_{\bar{r}_2^2(\gamma)}$ and for all $f\in \cF^\prime$, $z^\prime(f) = \sum_{k=1}^K I\{|G_f(W_k)|\leq (1/4) \bar{r}_2^2(\gamma)\}$ where $G_f(W_k)$ is the same quantity as in the proof of  Proposition~\ref{prop:sto_MOM_2}. Let us consider the contraction $\phi$ introduced in Proposition~\ref{prop:sto_MOM_2}. By definition of $\bar{r}_2^2(\gamma)$ and $V_K(\cdot)$, we have
	\begin{align*}
	\mathbb{E}\phi(8 & (\bar{r}_2^2(\gamma))^{-1} | G_f(W_k)|)  \leq \mathbb{P} \bigg( |G_f(W_k)| \geq \frac{\bar{r}_2^2(\gamma)}{8} \bigg) \leq \frac{64}{(\bar{r}_2^2(\gamma))^2} \mathbb{E}G_f(W_k)^2  = \frac{64}{ (\bar{r}_2^2(\gamma))^2} \mathbb{V}ar (P_{B_k}\cL_f) \\
	&  \leq  \frac{64K^2}{(\bar{r}_2^2(\gamma))^2N^2} \sum_{i \in B_k} \mathbb{V}ar_{P_i}(\cL_f) \leq \frac{64K}{(\bar{r}_2^2(\gamma))^2N} \sup\{\mathbb{V}ar_{P_i}(\cL_f):f\in\cF^\prime, i\in\cI\} \\
	& \leq \frac{64K}{(\bar{r}_2^2(\gamma))^2N} \sup\{\mathbb{V}ar_{P_i}(\cL_f):P\cL_f \leq \bar{r}_2^2(\gamma), i\in\cI\}  \leq \frac{1}{24} \enspace.
	\end{align*}
Using Mc Diarmid's inequality, the Gin{\'e}-Zinn symmetrization argument and the contraction lemma twice and the Lipschitz property of the loss function,  such as in the proof of 	Proposition~\ref{prop:sto_MOM}, we obtain with probability larger than $1-\exp(-|\cK|/1152)$, for all $f\in\cF^\prime$,
\begin{equation}\label{eq:concentration_McDirmid_without_ass}
z(f)\geq |\cK|(1-1/12) -\frac{32LK}{ N} \E \sup_{f\in\cF^\prime} \frac{1}{\bar{r}_2^2(\gamma)}\left|\sum_{i\in\cup_{k\in\cK}B_k} \sigma_i (f-f^*)(X_i)\right|. 
\end{equation} 

Now, it remains to use the definition of $\bar{r}_2^2(\gamma)$ to bound the expected supremum in the right-hand side of \eqref{eq:concentration_McDirmid_without_ass} to get
\begin{equation}\label{eq:sup_ball_2}
\E \sup_{f\in\cF^\prime} \frac{1}{\bar{r}_2(\gamma)^2}\left|\sum_{i\in\cup_{k\in\cK}B_k} \sigma_i (f-f^*)(X_i)\right|\leq \frac{\gamma |\cK|N}{K}.
\end{equation}
% In the second case, we denote $R = \sqrt{(4K)/((1/4)^2\alpha N)}\ \sup_{f\in F,P\cL_f\leq \bar{r}_2(\gamma)^2, i\in\cI } \sqrt{\mathbb{V}ar_{P_i}(\cL_f)} $ and let $f\in F$ be such that $\bar{r}_2(\gamma)^2\leq P\cL_f\leq R$. It follows from Lemma~\ref{lem:convexity_of_localized_ball} that there exists $f_0\in F$ and $1\leq \alpha\leq P\cL_f/(\bar{r}_2(\gamma))^2$ such that $P\cL_{f_0} = \bar{r}_2^2(\gamma)$ and $f-f^*=\alpha (f_0-f^*)$. Therefore, we have
% 	\begin{align*}
% 	\bigg| \sum_{i \in \cup_{k \in \mathcal{K}} B_k}  \sigma_i \frac{(f-f^{*})(X_i)}{\bar{r}_2^2(\gamma)} \bigg|  \leq 	\frac{\alpha}{R }  \bigg| \sum_{i \in \cup_{k \in \mathcal{K}} B_k}  \sigma_i (f_0-f^{*})(X_i) \bigg|  = \frac{1}{\bar{r}_2^2(\gamma ) }  \bigg| \sum_{i \in \cup_{k \in \mathcal{K}} B_k}  \sigma_i (f_0-f^{*})(X_i) \bigg| 
% 	\end{align*} and so 
% \begin{equation*}
% \sup_{\substack{f \in F:\\  \bar{r}_2^2(\gamma)  \leq P\cL_f \leq R}}   \bigg| \sum_{i \in \cup_{k \in \mathcal{K}} B_k}  \sigma_i \frac{(f-f^{*})(X_i)}{\bar{r}_2^2(\gamma)} \bigg| \leq \frac{1}{\bar{r}_2^2(\gamma) } \sup_{\substack{f \in F:\\  P\cL_f = \bar{r}_2^2(\gamma) } } \bigg| \sum_{i \in \cup_{k \in \mathcal{K}} B_k}  \sigma_i (f-f^{*})(X_i) \bigg|.
% \end{equation*}As a consequence, \eqref{eq:sup_ball_2} holds in both cases. The rest of the proof is identical to the one of Proposition~\ref{prop:sto_MOM}.

\textbf{Proof of Theorem~\ref{theo:main_without_bernstein_cond}}. The proof of Theorem~\ref{theo:main_without_bernstein_cond} follows from Lemma~\ref{lem:determinist_part_1_without_ass} and Proposition~\ref{prop:sto_MOM_2} for $\beta=4/7$ %$\alpha=x=1/24$,  $\theta=1/4$ 
and $\gamma = 1/(768L)$.

\section{Proof of Lemma~\ref{Lem:LinReg} \label{lem:rad}}

\begin{proof}
We have
  \begin{align*}
  \frac{1}{\sqrt{N}} \mathbb{E} \sup_{ f \in F: \|f-f^{*}\|_{L_2 } \leq r} \sum_{i=1}^N \sigma_i  (f-f^{*})(X_i) 
%  & =   \frac{1}{\sqrt{N}} \mathbb{E} \sup_{ t \in \mathbb{R}^p: \sqrt{\mathbb{E} <t-t^{*},X>^2 } \leq r } \sum_{i=1}^N \sigma_i  <t-t^{*},X_i> \\
  & =  \mathbb{E} \sup_{ t \in \mathbb{R}^d: \mathbb{E} \inr{t,X}^2 \leq r^2  }   \inr{t,   \frac{1}{\sqrt{N}} \sum_{i=1}^N \sigma_i X_i} \enspace.
  \end{align*}
  Let $\Sigma=\mathbb{E} X^TX$ denote the covariance matrix of $X$ and consider its SVD, $\Sigma = QDQ^T$ where $Q = [Q_1|\cdots|Q_d]\in\bR^{d\times d}$ is an orthogonal matrix and $D$ is a diagonal $d\times d$ matrix with non-negative entries. For all $t\in\R^d$, we have $\mathbb{E}\inr{X,t}^2 = t^T \Sigma t = \sum_{j=1}^d d_j \inr{t,Q_j}^2$.  Then
  \begin{align*}
  &\mathbb{E} \sup_{ t \in \mathbb{R}^d: \sqrt{\mathbb{E} \inr{t,X}^2 } \leq r  } \inr{t,  \frac{1}{\sqrt{N}} \sum_{i=1}^N \sigma_i X_i}  =   \mathbb{E} \sup_{ t \in \mathbb{R}^d: \sqrt{\mathbb{E} \inr{t  ,X}^2 } \leq r  }  \inr{ \sum_{j=1}^d \inr{t,Q_j}Q_j ,   \frac{1}{\sqrt{N}} \sum_{i=1}^N \sigma_i X_i }\\
  &=  \mathbb{E} \sup_{ t \in \mathbb{R}^d: \sqrt{ \sum_{j=1}^d d_j \inr{t,Q_j}^2 } \leq r  } \sum_{j=1:d_j\ne 0}^d \sqrt{d_j} \inr{t,Q_j}  \inr{ \frac{Q_j}{\sqrt{d_j}},   \frac{1}{\sqrt{N}} \sum_{i=1}^N \sigma_i X_i } \\ 
  & \leq r \mathbb{E} \sqrt{ \sum_{j=1:d_j\ne 0}^d \inr{ \frac{Q_j}{\sqrt{d_j}},  \frac{1}{\sqrt{N}} \sum_{i=1}^N \sigma_i X_i }^2} \leq  r \sqrt{ \mathbb{E}  \sum_{j=1:d_j\ne 0}^d \inr{ \frac{Q_j}{\sqrt{d_j}},   \frac{1}{\sqrt{N}} \sum_{i=1}^N \sigma_i X_i }^2} \enspace.
  \end{align*}  
Moreover, for any $j$ such that $d_j\ne 0$,
  \begin{align*}
   \mathbb{E} \inr{ \frac{Q_j}{\sqrt{d_j}}, & \frac{1}{\sqrt{N}} \sum_{i=1}^N \sigma_i X_i }^2   = \mathbb{E} \frac{1}{N} \sum_{k,l=1}^N \sigma_l \sigma_k \inr{ \frac{Q_j}{\sqrt{d_j}}, X_k }\inr{ \frac{Q_j}{\sqrt{d_j}}, X_l} = \frac{1}{N} \sum_{k=1}^N \mathbb{E}  \inr{ \frac{Q_j}{\sqrt{d_j}}, X_k}^2 \\
   & =  \frac{1}{N} \sum_{k=1}^N  \bigg(\frac{Q_j}{\sqrt{d_j}}\bigg)^T \mathbb{E} X_k^TX_k \bigg(\frac{Q_j}{\sqrt{d_j}}\bigg) = \frac{1}{N} \sum_{k=1}^N  \bigg(\frac{Q_j}{\sqrt{d_j}}\bigg)^T \Sigma\bigg(\frac{Q_j}{\sqrt{d_j}}\bigg) 
  \end{align*}
By orthonormality, $Q^TQ_j  = e_j$ and $Q_j^TQ = e_j^T$, then, for any $j$ such that $d_j\ne 0$, 
  \begin{align*}
  \mathbb{E} \inr{ \frac{Q_j}{\sqrt{d_j}},   \frac{1}{\sqrt{N}} \sum_{i=1}^N \sigma_i X_i }^2 = \frac{1}{N} \sum_{k=1}^N \frac{1}{d_j} e_j^T D e_j = 1\enspace.
  \end{align*} 
Finally, we obtain
  \begin{align*}
    \frac{1}{\sqrt{N}} \mathbb{E} \sup_{ f \in F: \|f-f^{*}\|_{L_2 } \leq r} \sum_{i=1}^N \sigma_i  (f-f^{*})(X_i) \leq r \sqrt{\sum_{j=1}^d {\bf 1}_{\{d_j\ne 0\}}} = r \sqrt{\text{Rank}(\Sigma)}
  \end{align*}
and therefore the fixed point $\tilde{r}_2(\gamma)$ is such that
  \begin{align*}
  \tilde{r}_2(\gamma)  = & \inf  \bigg\{ r > 0,  \forall J \in \mathcal{I}: |J| \geq N/2, \hspace{0.2cm} \mathbb{E}\sup_{t \in \mathbb{R}^d : \sqrt{\mathbb{E} \inr{t-t^{*},X}^2 } \leq r }  \sum_{i \in J }  \sigma_i  \inr{X_i,t-t^{*}} \hspace{0.1cm}  \leq r^2 |J| \gamma \bigg\}\\
  &  \leq   \inf  \bigg\{ r > 0,  \forall J \in \mathcal{I}: |J| \geq N/2, \hspace{0.2cm}\hspace{0.2cm} r\sqrt{\text{Rank}(\Sigma)}  \leq r^2 \sqrt{|J|} \gamma \bigg\} \leq \sqrt{\frac{\text{Rank}(\Sigma)}{2\gamma^2N}}\enspace.
  \end{align*}
 \end{proof}

\section{Proofs of the results of Section~\ref{app:ass}} \label{proof_bernstein}
We begin this Section with a simple Lemma coming from the convexity of $F$.
\begin{Lemma} \label{lemm:ber}
	For any $f \in F$,
	\begin{equation*}
		\lim_{t \rightarrow 0^+} \frac{R(f^*+t(f-f^*)) - R(f^*)}{t} \geq 0
	\end{equation*}
	where we recall that $R(f) = \E_{(X,Y) \sim P} [\ell_f(X,Y)]$.
\end{Lemma}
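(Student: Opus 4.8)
The plan is to exploit the fact that $f^*$ minimizes the convex functional $R$ over the convex set $F$, so that the statement reduces to the nonnegativity of the one-sided directional derivative of $R$ at $f^*$ in the direction $f-f^*$. No differentiability of the loss is needed, which matters since the hinge and quantile losses are not everywhere differentiable.

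First I would fix $f\in F$ and set $\phi(t)=R(f^*+t(f-f^*))$ for $t\in[0,1]$. By Assumption~\ref{assum:convex} the class $F$ is convex, so $f^*+t(f-f^*)=(1-t)f^*+tf\in F$ for every $t\in[0,1]$; in particular $\phi$ is well defined and finite, since under Assumption~\ref{assum:lip_conv} the excess loss $\ell_g-\ell_{f^*}$ lies in $L_1(P)$ for all $g\in F$, so $R$ takes finite values on $F$. Because $f^*$ is the oracle, $\phi(t)=R(f^*+t(f-f^*))\geq R(f^*)=\phi(0)$ for every $t\in[0,1]$, and hence the secant slope $(\phi(t)-\phi(0))/t$ is nonnegative for all $t\in(0,1]$.

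Next I would observe that $\phi$ is convex on $[0,1]$. Indeed, for each fixed $(x,y)$ the map $t\mapsto \bar{\ell}(f^*(x)+t(f(x)-f^*(x)),y)$ is convex by Assumption~\ref{assum:lip_conv}, and taking expectation with respect to $P$ preserves convexity. For a convex function the secant slopes $t\mapsto(\phi(t)-\phi(0))/t$ are non-decreasing in $t>0$, so the limit as $t\to0^+$ exists and equals $\inf_{t\in(0,1]}(\phi(t)-\phi(0))/t$. Being an infimum of nonnegative quantities, this limit is itself nonnegative, which is exactly the claimed inequality.

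I do not expect any genuine obstacle here: this is a first-order optimality condition, and the entire content is packaged in two elementary facts. The only points deserving a word of care are the finiteness of $R$ on $F$ (guaranteed by the Lipschitz assumption, which makes $\ell_g-\ell_{f^*}$ integrable) and the existence of the one-sided limit (guaranteed by the monotonicity of secant slopes of the convex function $\phi$, so that convexity of $\bar{\ell}(\cdot,y)$ is used exactly to justify that the limit is a legitimate decreasing limit rather than merely a $\liminf$).
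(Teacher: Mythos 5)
Your proof is correct and follows essentially the same route as the paper: convexity of $F$ ensures $f^*+t(f-f^*)\in F$, and optimality of $f^*$ makes every secant slope $(R(f^*+t(f-f^*))-R(f^*))/t$ nonnegative. The only difference is that you additionally justify the \emph{existence} of the one-sided limit via monotonicity of the secant slopes of the convex map $t\mapsto R(f^*+t(f-f^*))$, a point the paper's two-line proof leaves implicit.
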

\begin{proof}
	Let $t \in (0,1)$. By convexity of $F$, $f^* + t(f-f^*) \in F$ 	and $R(f^*+t(f-f^*)) - R(f^*) \geq  0$ because $f^*$ minimizes the risk over $F$. 
\end{proof}

\subsection{Proof of Theorem~ \ref{quantile_loss}}\label{proof:thm4}
Let $r >0$. Let $f\in F$ be such that $\norm{f-f^*}_{L_2}\leq r$. For all $x\in\cX$ denote by $F_{Y|X=x}$ the conditional c.d.f. of $Y$ given $X=x$. We have
\begin{align*}
\mathbb{E} \bigg[  \ell_f(X,Y) | X = x \bigg] & = (\tau -1) \int {\bf 1}_{y \leq f(x)} (y-f(x))F_{Y|X=x}({\rm d}y) + \tau \int {\bf 1}_{y > f(x)} (y-f(x))F_{Y|X=x}({\rm d}y) \\
& =   \int {\bf 1}_{y > f(x)} (y-f(x))F_{Y|X=x}({\rm d}y) + (\tau-1) \int {\bf 1}_{\mathbb{R}} (y-f(x))F_{Y|X=x}({\rm d}y)\enspace.
\end{align*}
By Fubini's theorem,
\begin{align*}
&\int {\bf 1}_{z \geq f(x)} (1-F_{Y|X=x}(z))\text{d}z  =  \int {\bf 1}_{z \geq f(x)}\bigg( 1 - \bP(Y \leq z |X=x ) \bigg) {\rm d}z   =  \int {\bf 1}_{z \geq f(x)} \mathbb{E} [ {\bf 1}_{  Y > z }|X=x ] \text{d}z\\
&  = \int \int  {\bf 1}_{  y > z \geq f(x) } f_{Y|X=x}(y)\text{d}y \text{d}z = \int  {\bf 1}_{  y > f(x) }(y-f(x)) f_{Y|X=x}(y) \text{d}y\\
&  = \int {\bf 1}_{y > f(x)} (y-f(x)) F_{Y|X=x}({\rm d}y)\enspace.
\end{align*}
Therefore,
\begin{align*}
\mathbb{E} \bigg[  \ell_f(X,Y) | X=x\bigg] & =  \int {\bf 1}_{y \geq f(x)} (1-F_{Y|X=x}(y))\text{d}y + (\tau -1) \bigg( \int_{\mathbb{R}} yF_{Y|X=x}({\rm d}y) - f(x) \bigg) \\
& = g(x,f(x)) + (\tau-1 )  \int_{\mathbb{R}} yF_{Y|X=x}({\rm d}y)
\end{align*}
where $g:(x,a)\in\cX\times\bR \to \int {\bf 1}_{y \geq a} (1 - F_{Y|X=x}(y))\text{d}y + (1-\tau)a$. It follows that 
\begin{equation}\label{eq:Risk1}
P\mathcal{L}_f=\E[g(X,f(X))-g(X,f^*(X))]\enspace. 
\end{equation} 
Since for all $x \in \cX$, $a \mapsto g(x,a)$ is twice differentiable, from a second order Taylor expansion we get
\begin{align*}
P\cL_f = \E \bigg[  g(X,f(X)) - g(X,f^*(X)) \bigg] & = \E \bigg[  \frac{\partial g(X,a)}{\partial a}(f^*(X)) (f(X)-f^*(X)) \bigg] \\
& + \frac{1}{2} \int_{x \in \cX} \frac{\partial^2 g(x,a)}{\partial a^2}(z_x) (f(x)-f^*(x))^2 dP_X(x)
\end{align*}
where for all $x\in\cX$, $z_x$ is some point in $\big [\min(f(x), f^{*}(x)),\max(f(x), f^{*}(x)) \big]$. For the first order term, we have 
\begin{equation*}
  \E \bigg[  \frac{\partial g(X,a)}{\partial a}(f^*(X)) (f(X)-f^*(X)) \bigg] = \E \lim_{t \rightarrow 0^+} \frac{g(X,f^*(X)+ t(f(X)-f^*(X)) - g(X,f^*(X))}{t}.
\end{equation*}
For all $x \in \cX$, we have $[g(x,f^*(x)+t(f(x)-f^*(x))) - g(x,f^*(x))]/t \leq  (2-\tau) |f^*(x)- f(x)|$ which is integrable with respect to $P_X$. Thus, by the dominated convergence theorem, it is possible to interchange integral and limit and therefore using  Lemma~\ref{lemm:ber}, we obtian 
\begin{align*}
\E \bigg[  \frac{\partial g(X,a)}{\partial a}(f^*(X)) (f(X)-f^*(X)) \bigg] & =  \lim_{t \rightarrow 0^+} \bE \frac{g(X,f^*(X)+ t(f(X)-f^*(X)) - g(X,f^*(X))}{t} \\
& = \lim_{t \rightarrow 0^+} \frac{R(f^*+ t(f-f^*)) - R(f^*)}{t} \geq 0.
\end{align*}
Given that for all $x\in\cX$, $\frac{\partial^2 g (x,a)}{\partial a^2} (z)= f_{Y|X=x}(z)$ for all $z\in \bR$ it follows that
\begin{align*}
P\cL_f  \geq  \frac{1}{2} \int_{x \in \cX} f_{Y|X=x}(z_x) (f(x)-f^*(x))^2 dP_X(x).
\end{align*}

Consider $A = \{ x \in \mathcal{X}, |f(x)-f^{*}(x)| \leq   (\sqrt{2}C')^{(2+\varepsilon)/\varepsilon} r  \}$. Given that $\|f-f^{*}\|_{L_2 }\leq r$, by Markov's inequality, $P(X \in A) \geqslant  1-1/(\sqrt{2}C')^{(4+2\varepsilon)/\varepsilon}$. From Assumption~\ref{ass:quantil} we get
\begin{align}\label{eq:Risk2_2} 
\frac{2P\mathcal{L}_f}{\alpha} & \geqslant \mathbb{E} [  I_A(X) (f(X)-f^{*}(X))^2 ] =\|f-f^*\|_{L_2 }^2-\mathbb{E} [  I_{A^c}(X) (f(X)-f^{*}(X))^2 ] \enspace.
\end{align}
By Holder and Markov's inequalities,
\[
\mathbb{E} [  I_{A^c}(X) (f(X)-f^{*}(X))^2 ]\leqslant  \big(  \mathbb{E} [  I_{A^c}(X)]   \big)^{\varepsilon/(2+\varepsilon)}  \big( \mathbb{E} [  (f(X)-f^{*}(X))^{2+\varepsilon} ]  \big)^{2/(2+\varepsilon)} \leqslant \frac{ \|f-f^{*}\|_{L_{2+\varepsilon}}^2}{2(C')^2}\enspace.
\]
By Assumption~\ref{ass:L4L2}, it follows that  $\mathbb{E} [  I_{A^c}(X) (f(X)-f^{*}(X))^2 ]\leqslant  \|f-f^{*}\|_{L_2}^2/2$ and we conclude with \eqref{eq:Risk2_2}.

\subsection{Proof of Theorem~\ref{huber_loss}}\label{proof:thm5}
Let $r>0$. Let $f\in F$ be such that $\norm{f-f^*}_{L_2}\leq r$. We have
\begin{align*}
P\cL_f = \mathbb{E}_X \mathbb{E} \bigg[  \rho_H(Y-f(x)) - \rho_H(Y-f^*(x)) | X= x \bigg] = \E \big[g(X, f(X)) - g(X, f^*(X))\big]
\end{align*}where  $g:(x,a)\in\cX\times\bR = \mathbb{E}[\rho_H(Y-a)|X=x]$. Let $F_{Y|X=x}$ denote the c.d.f. of $Y$ given $X=x$. Since for all $x \in \cX$, $a \mapsto g(x,a)$ is twice differentiable in its second argument (see Lemma 2.1 in \cite{elsener2016robust}), a second Taylor expansion yields
\begin{align*}
 P\cL_f & = \E \bigg[ \frac{\partial g(X,a)}{\partial a}(f^*(X))(f(X)-f^*(X)) \bigg]  +  \frac{1}{2} \int_{x \in \cX}  (f(x)-f^{*}(x))^2  \frac{\partial^2 g(x,a)}{\partial a^2}(z_x)  dP_X(x)  
\end{align*}
 where for all $x\in\cX$, $z_x$ is some point in $[\min(f(x), f^{*}(x)), \max(f(x), f^{*}(x))]$. By Lemma~\ref{lemm:ber}, with the same reasoning as the one in Section~\ref{proof:thm4}, we get
\begin{align*}
P\cL_f  \geq \frac{1}{2} \int_{x \in \cX}  (f(x)-f^{*}(x))^2  \frac{\partial^2 g(x,a)}{\partial a^2}(z_x)  dP_X(x)  \enspace.
\end{align*}
Moreover, for all $z \in\bR$,
\begin{align*}
\frac{\partial^2 g(x,a)}{\partial a^2}(z) &  = F_{Y|X=x}(z + \delta) - F_{Y|X=x}(z-\delta).
\end{align*}
Now, let $A = \{ x \in \cX: |f(x)-f^{*}(x)| \leq (\sqrt{2}C')^{(2+\varepsilon)/\varepsilon} r   \} $. It follows from Assumption \ref{ass:quantil} that $P\cL_f \geq  (\alpha/2) \mathbb{E} [(f(X)-f^{*}(X))^2 I_A(X)]$. Since $\|f-f^{*}\|_{L_2 }\leq r$, by Markov's inequality, $P(X \in A) \geqslant  1-1/(\sqrt{2}C')^{(4+2\varepsilon)/\varepsilon}$. By Holder and Markov's inequalities,
\[
\mathbb{E} [  I_{A^c}(X) (f(X)-f^{*}(X))^2 ]\leqslant  \big(  \mathbb{E} [  I_{A^c}(X)]   \big)^{\varepsilon/(2+\varepsilon)}  \big( \mathbb{E} [  (f(X)-f^{*}(X))^{2+\varepsilon} ]  \big)^{2/(2+\varepsilon)} \leqslant \frac{ \|f-f^{*}\|_{L_{2+\varepsilon}}^2}{2(C')^2}\enspace.
\]
By Assumption~\ref{ass:L4L2}, it follows that  $\mathbb{E} [  I_{A^c}(X) (f(X)-f^{*}(X))^2 ]\leqslant  \frac{\|f-f^{*}\|_{L_2}^2}{2}$, which concludes the proof.

\subsection{Proof of Theorem~\ref{thm:AppliLog}}\label{proof:thm7}
Let $r>0$. Let $f\in F$ be such that $\norm{f-f^*}_{L_2}\leq r$. Let $\eta(x) = P(Y=1|X=x)$. Write first that $P\cL_f = \mathbb{E} \bigg[  g(X, f(X)) - g(X, f^*(X))\bigg]$ where for all $x\in\cX$ and $a\in\bR$,  $g(x,a) = \eta(x) \log(1+\exp(-a)) + (1-\eta(x))\log(1+\exp(a))$. From Lemma~\ref{lemm:ber} and the same reasoning as in Section~\ref{proof:thm4} and~\ref{proof:thm5} we get 
\begin{align*}
P\cL_f \geq   \int_{x \in \cX} \frac{\partial_2^2 g(x,a)}{\partial a^2}(z_x)\frac{(f(x)-f^{*}(x))^2 }{2} dP_X(x) = \int_{x \in \cX} \frac{e^{z_x}}{(1+e^{z_x})^2}\frac{(f(x)-f^{*}(x))^2 }{2} dP_X(x)
\end{align*}
for some $z_x \in [\min(f(x), f^{*}(x)), \max(f(x), f^{*}(x))]$. Now, let
\begin{equation*}
	A = \left\{ x \in \mathcal{X}: |f^{*}(x)|\leq c_0, |f(x)-f^{*}(x)|\leq (2C')^{(2+\varepsilon)/\varepsilon} r  \right\} \enspace.
\end{equation*}
On the event $A$ we have
\begin{align*}
P\mathcal{L}_f  \geq \frac{e^{- c_0 -(2C')^{(2+\varepsilon)/\varepsilon} r  }  }{2\big( 1+ e^{c_0 + (2C')^{(2+\varepsilon)/\varepsilon} r  }  \big)^2 }  \mathbb{E} [I_A(X) (f(X)-f^{*}(X))^2]
\end{align*}
Using the fact that $P(X \notin A) \leq P(|f^*(X) | > c_0) + P(|f(X)-f^*(X| > (2C')^{(2+\varepsilon)/\varepsilon} r ) \leq 2/(2C')^{(4+\varepsilon)/\varepsilon}  $, we conclude with Assumption \ref{ass:L4L2} and the same analysis as in the two previous proofs.

\subsection{Proof of Theorem~\ref{thm:hinge}} \label{proof:thm8}

Let $r>0$ such that $ r(\sqrt 2 C')^{(2+\varepsilon)/\varepsilon} \leq 1$. Let $f$ be in $F$ such that $\|f-f^*\|_{L_2} \leq r$. For all $x$ in $\mathcal{X}$ let us denote $\eta(x) = \mathbb P(Y=1 | X=x) $. It is easy to verify that the Bayes estimator (which is equal to the oracle) is defined as $f^*(x) = \mbox{sign}(2\eta(x)-1)$. Consider the set $A= \{ x \in \mathcal X, |f(x)-f^*(x)| \leq r(\sqrt 2 C')^{(2+\varepsilon)/\varepsilon}  \}$. Since $\|f-f^*\|_{L_2} \leq r$, by Markov's inequality $\mathbb P(X \in A) \geq 1-1/(\sqrt 2 C')^{(4+ 2\varepsilon)/\varepsilon}$. Let $x$ be in $A$. If $f^*(x) = -1$ (i.e $2\eta(x) \leq 1$) and $f(x) \leq f^*(x) = -1$ we obtain
\begin{equation*}
	\mathbb E \big[ \ell_f(X,Y) | X= x  \big]- \mathbb E \big[ \ell_{f^*}(X,Y) | X= x  \big] = \eta(x)(1-f(x)) - \eta(x) (1-f^*(x)) \geq \eta(x) \big(f(x)-f^*(x) \big)^2
\end{equation*}
where we used the fact that on $A$, $|f(x)-f^*(x)| \leq  r(\sqrt 2 C')^{(2+\varepsilon)/\varepsilon}  \leq 1$. 
Using the same analysis for the other cases we get that 
\begin{align*}
	\mathbb E \big[ \ell_f(X,Y) | X= x  \big]- \mathbb E \big[ \ell_{f^*}(X,Y) | X= x  \big] & \geq \min \big(\eta(x),1-\eta(x), |1-2\eta(x)| \big) \big(f(x)-f^*(x) \big)^2 \\
	&  \geq \alpha \big(f(x)-f^*(x) \big)^2 
\end{align*}
Therefore,
\begin{align}\label{eq:Risk2} 
	\frac{P\mathcal{L}_f}{\alpha} & \geqslant \mathbb{E} [  I_A(X) (f(X)-f^{*}(X))^2 ] =\|f-f^*\|_{L_2 }^2-\mathbb{E} [  I_{A^c}(X) (f(X)-f^{*}(X))^2 ] \enspace.
\end{align}
By Holder and Markov's inequalities,
\[
\mathbb{E} [  I_{A^c}(X) (f(X)-f^{*}(X))^2 ]\leqslant  \big(  \mathbb{E} [  I_{A^c}(X)]   \big)^{\varepsilon/(2+\varepsilon)}  \big( \mathbb{E} [  (f(X)-f^{*}(X))^{2+\varepsilon} ]  \big)^{2/(2+\varepsilon)} \leqslant \frac{ \|f-f^{*}\|_{L_{2+\varepsilon}}^2}{2(C')^2}\enspace.
\]
By Assumption~\ref{ass:L4L2}, it follows that  $\mathbb{E} [  I_{A^c}(X) (f(X)-f^{*}(X))^2 ]\leqslant  \frac{\|f-f^{*}\|_{L_2}^2}{2}$ and we conclude with \eqref{eq:Risk2}.
%\section{Discussion and outlook}
%
%We obtained fast rates of convergence for minmax MOM estimators under weak moment assumptions one the design in generic learning problems with Lipschitz-loss functions. We introduced a new analysis where the Bernstein condition is only required close to the \emph{oracle}. We also provided the adhoc ``double regime analysis'' associated to this Assumption. We further presented a simple adaptation of a gradient descent algorithm based on the construction underlying Minmax MOM estimators.
%%, stressing that other o sophisticated algorithms for ERM could have been adapted as easily. 
%This algorithm is robust to dataset corruption, very fast to run and efficient in our numerical experiments.
%% \\
%%There are numerous directions in which the extent of our results can be extended. A relevant one would be the study of the regularized version which strongly depends on the regularization we consider. \\
%%Other interesting uses of the Median Of Means estimator would be the Square-Root-Lasso \cite{belloni2011square}. The square root prevents the use of the framework we used here. We need to develop new techniques to proof oracle inequalities.

%\acks{This work is supported by the Labex Ecodec under the grant ANR-11-
%	LABEX-0047 from the French Agence Nationale de la Recherche. }

\begin{footnotesize}
\bibliographystyle{plain}
\bibliography{biblio}

\begin{thebibliography}{10}

\bibitem{MR1688610}
Noga Alon, Yossi Matias, and Mario Szegedy.
\newblock The space complexity of approximating the frequency moments.
\newblock {\em J. Comput. System Sci.}, 58(1, part 2):137--147, 1999.
\newblock Twenty-eighth Annual ACM Symposium on the Theory of Computing
  (Philadelphia, PA, 1996).

\bibitem{pierre2017estimation}
P.~Alquier, V.~Cottet, and G.~Lecu{\'e}.
\newblock Estimation bounds and sharp oracle inequalities of regularized
  procedures with lipschitz loss functions.
\newblock {\em arXiv preprint arXiv:1702.01402}, 2017.

\bibitem{MR2906886}
Jean-Yves Audibert and Olivier Catoni.
\newblock Robust linear least squares regression.
\newblock {\em Ann. Statist.}, 39(5):2766--2794, 2011.

\bibitem{bach2011convex}
Francis Bach, Rodolphe Jenatton, Julien Mairal, Guillaume Obozinski, et~al.
\newblock Convex optimization with sparsity-inducing norms.
\newblock {\em Optimization for Machine Learning}, 5:19--53, 2011.

\bibitem{MR3595933}
Y.~Baraud, L.~Birg\'e, and M.~Sart.
\newblock A new method for estimation and model selection: {$\rho$}-estimation.
\newblock {\em Invent. Math.}, 207(2):425--517, 2017.

\bibitem{MR2166554}
Peter~L. Bartlett, Olivier Bousquet, and Shahar Mendelson.
\newblock Local {R}ademacher complexities.
\newblock {\em Ann. Statist.}, 33(4):1497--1537, 2005.

\bibitem{bartlett2005local}
Peter~L Bartlett, Olivier Bousquet, Shahar Mendelson, et~al.
\newblock Local rademacher complexities.
\newblock {\em The Annals of Statistics}, 33(4):1497--1537, 2005.

\bibitem{MR2240689}
Peter~L. Bartlett and Shahar Mendelson.
\newblock Empirical minimization.
\newblock {\em Probab. Theory Related Fields}, 135(3):311--334, 2006.

\bibitem{MR762855}
Lucien Birg\'e.
\newblock Stabilit\'e et instabilit\'e du risque minimax pour des variables
  ind\'ependantes \'equidistribu\'ees.
\newblock {\em Ann. Inst. H. Poincar\'e Probab. Statist.}, 20(3):201--223,
  1984.

\bibitem{MR2182250}
St\'{e}phane Boucheron, Olivier Bousquet, and G\'{a}bor Lugosi.
\newblock Theory of classification: a survey of some recent advances.
\newblock {\em ESAIM Probab. Stat.}, 9:323--375, 2005.

\bibitem{MR3185193}
St\'{e}phane Boucheron, G\'{a}bor Lugosi, and Pascal Massart.
\newblock {\em Concentration inequalities}.
\newblock Oxford University Press, Oxford, 2013.
\newblock A nonasymptotic theory of independence, With a foreword by Michel
  Ledoux.

\bibitem{bubeck2015convex}
S{\'e}bastien Bubeck.
\newblock Convex optimization: Algorithms and complexity.
\newblock {\em Foundations and Trends{\textregistered} in Machine Learning},
  8(3-4):231--357, 2015.

\bibitem{MR3052407}
Olivier Catoni.
\newblock Challenging the empirical mean and empirical variance: a deviation
  study.
\newblock {\em Ann. Inst. Henri Poincar\'{e} Probab. Stat.}, 48(4):1148--1185,
  2012.

\bibitem{devroye2016sub}
Luc Devroye, Matthieu Lerasle, Gabor Lugosi, Roberto~I Oliveira, et~al.
\newblock Sub-gaussian mean estimators.
\newblock {\em The Annals of Statistics}, 44(6):2695--2725, 2016.

\bibitem{elsener2016robust}
Andreas Elsener and Sara van~de Geer.
\newblock Robust low-rank matrix estimation.
\newblock {\em arXiv preprint arXiv:1603.09071}, 2016.

\bibitem{jon}
Qiyang Han and Jon~A. Wellner.
\newblock Convergence rates of least squares regression estimators with
  heavy-tailed errors.
\newblock 2017.

\bibitem{huber2011robust}
Peter~J Huber and E.~Ronchetti.
\newblock Robust statistics.
\newblock In {\em International Encyclopedia of Statistical Science}, pages
  1248--1251. Springer, 2011.

\bibitem{MR855970}
Mark~R. Jerrum, Leslie~G. Valiant, and Vijay~V. Vazirani.
\newblock Random generation of combinatorial structures from a uniform
  distribution.
\newblock {\em Theoret. Comput. Sci.}, 43(2-3):169--188, 1986.

\bibitem{MR2329442}
Vladimir Koltchinskii.
\newblock Local {R}ademacher complexities and oracle inequalities in risk
  minimization.
\newblock {\em Ann. Statist.}, 34(6):2593--2656, 2006.

\bibitem{koltchinskii2011empirical}
Vladimir Koltchinskii.
\newblock Empirical and rademacher processes.
\newblock In {\em Oracle Inequalities in Empirical Risk Minimization and Sparse
  Recovery Problems}, pages 17--32. Springer, 2011.

\bibitem{MR2829871}
Vladimir Koltchinskii.
\newblock {\em Oracle inequalities in empirical risk minimization and sparse
  recovery problems}, volume 2033 of {\em Lecture Notes in Mathematics}.
\newblock Springer, Heidelberg, 2011.
\newblock Lectures from the 38th Probability Summer School held in Saint-Flour,
  2008, \'Ecole d'\'Et\'e de Probabilit\'es de Saint-Flour. [Saint-Flour
  Probability Summer School].

\bibitem{MR3431642}
Vladimir Koltchinskii and Shahar Mendelson.
\newblock Bounding the smallest singular value of a random matrix without
  concentration.
\newblock {\em Int. Math. Res. Not. IMRN}, (23):12991--13008, 2015.

\bibitem{lecue2017learning}
G.~Lecu{\'e} and M.~Lerasle.
\newblock Learning from mom's principles: Le cam's approach.
\newblock {\em To appear in Stochastic processes and their applications}, 2017.

\bibitem{lecue2017robust}
G.~Lecu{\'e} and M.~Lerasle.
\newblock Robust machine learning by median-of-means: theory and practice.
\newblock {\em To appear in the Annals of Statistics}, 2017.

\bibitem{lecue2016performance}
Guillaume Lecu{\'e} and Shahar Mendelson.
\newblock Performance of empirical risk minimization in linear aggregation.
\newblock {\em Bernoulli}, 22(3):1520--1534, 2016.

\bibitem{Lecue2018a}
G.~Lecué, M.~Lerasle, and T.~Mathieu.
\newblock Robust classification via mom minimization.
\newblock {\em arXiv:1808.03106}, 2018.

\bibitem{MR1849347}
Michel Ledoux.
\newblock {\em The concentration of measure phenomenon}, volume~89 of {\em
  Mathematical Surveys and Monographs}.
\newblock American Mathematical Society, Providence, RI, 2001.

\bibitem{ledoux2013probability}
Michel Ledoux and Michel Talagrand.
\newblock {\em Probability in Banach Spaces: isoperimetry and processes}.
\newblock Springer Science \& Business Media, 2013.

\bibitem{LugosiMendelson2016}
Gabor Lugosi and Shahar Mendelson.
\newblock Risk minimization by median-of-means tournaments.
\newblock {\em To appear in JEMS}, 2016.

\bibitem{LugosiMendelson2017}
Gabor Lugosi and Shahar Mendelson.
\newblock Regularization, sparse recovery, and median-of-means tournaments.
\newblock {\em Preprint available on arXiv:1701.04112}, 2017.

\bibitem{LugosiMendelson2017-2}
Gabor Lugosi and Shahar Mendelson.
\newblock Sub-gaussian estimators of the mean of a random vector.
\newblock {\em To appear in Ann. Statist. arXiv:1702.00482}, 2017.

\bibitem{MR1765618}
Enno Mammen and Alexandre~B. Tsybakov.
\newblock Smooth discrimination analysis.
\newblock {\em Ann. Statist.}, 27(6):1808--1829, 1999.

\bibitem{mendelson2014learning}
Shahar Mendelson.
\newblock Learning without concentration.
\newblock In {\em Conference on Learning Theory}, pages 25--39, 2014.

\bibitem{MR3367000}
Shahar Mendelson.
\newblock Learning without concentration.
\newblock {\em J. ACM}, 62(3):Art. 21, 25, 2015.

\bibitem{MR3645129}
Shahar Mendelson.
\newblock On multiplier processes under weak moment assumptions.
\newblock In {\em Geometric aspects of functional analysis}, volume 2169 of
  {\em Lecture Notes in Math.}, pages 301--318. Springer, Cham, 2017.

\bibitem{MR2373017}
Shahar Mendelson, Alain Pajor, and Nicole Tomczak-Jaegermann.
\newblock Reconstruction and subgaussian operators in asymptotic geometric
  analysis.
\newblock {\em Geom. Funct. Anal.}, 17(4):1248--1282, 2007.

\bibitem{MR702836}
A.~S. Nemirovsky and D.~B. Yudin.
\newblock {\em Problem complexity and method efficiency in optimization}.
\newblock A Wiley-Interscience Publication. John Wiley \& Sons, Inc., New York,
  1983.
\newblock Translated from the Russian and with a preface by E. R. Dawson,
  Wiley-Interscience Series in Discrete Mathematics.

\bibitem{scikit-learn}
F.~Pedregosa, G.~Varoquaux, A.~Gramfort, V.~Michel, B.~Thirion, O.~Grisel,
  M.~Blondel, P.~Prettenhofer, R.~Weiss, V.~Dubourg, J.~Vanderplas, A.~Passos,
  D.~Cournapeau, M.~Brucher, M.~Perrot, and E.~Duchesnay.
\newblock Scikit-learn: Machine learning in {P}ython.
\newblock {\em Journal of Machine Learning Research}, 12:2825--2830, 2011.

\bibitem{saumard2018optimality}
Adrien Saumard.
\newblock On optimality of empirical risk minimization in linear aggregation.
\newblock {\em Bernoulli}, 24(3):2176--2203, 2018.

\bibitem{MR3184689}
Michel Talagrand.
\newblock {\em Upper and lower bounds for stochastic processes}, volume~60 of
  {\em Ergebnisse der Mathematik und ihrer Grenzgebiete. 3. Folge. A Series of
  Modern Surveys in Mathematics [Results in Mathematics and Related Areas. 3rd
  Series. A Series of Modern Surveys in Mathematics]}.
\newblock Springer, Heidelberg, 2014.
\newblock Modern methods and classical problems.

\bibitem{MR2051002}
Alexandre~B. Tsybakov.
\newblock Optimal aggregation of classifiers in statistical learning.
\newblock {\em Ann. Statist.}, 32(1):135--166, 2004.

\bibitem{MR3526202}
Sara van~de Geer.
\newblock {\em Estimation and testing under sparsity}, volume 2159 of {\em
  Lecture Notes in Mathematics}.
\newblock Springer, [Cham], 2016.
\newblock Lecture notes from the 45th Probability Summer School held in
  Saint-Four, 2015, \'Ecole d'\'Et\'e de Probabilit\'es de Saint-Flour.
  [Saint-Flour Probability Summer School].

\bibitem{Vapnik:1995:NSL:211359}
Vladimir Vapnik.
\newblock {\em The Nature of Statistical Learning Theory}.
\newblock Springer-Verlag New York, Inc., New York, NY, USA, 1995.

\bibitem{vapnik1998statistical}
Vladimir Vapnik.
\newblock {\em Statistical learning theory}, volume~1.
\newblock Wiley New York, 1998.

\bibitem{fan}
W.-X. Zhou, K.~Bose, J.~Fan, and H.~Liu.
\newblock A new perspective on robust m-estimation: Finite sample theory and
  applications to dependence-adjusted multiple testing.
\newblock 2018.

\end{thebibliography}
\end{footnotesize}

%\vskip 0.2in
%\bibliography{biblio}

\end{document}